\documentclass[11pt,reqno]{amsart}

\usepackage[T1]{fontenc}
\usepackage[utf8]{inputenc}
\usepackage{amsmath,amssymb,mathtools,mathrsfs}
\usepackage{geometry}
\usepackage{enumitem}
\usepackage{microtype}
\usepackage[colorlinks=true,linkcolor=blue,citecolor=blue,urlcolor=blue]{hyperref}
\usepackage[backend=biber,style=numeric,sorting=nyt,sortcites=true,giveninits=true,maxbibnames=99,doi=true,isbn=false,url=false,eprint=true]{biblatex}
\allowdisplaybreaks
\numberwithin{equation}{section}

\newtheorem{theorem}{Theorem}[section]
\newtheorem{proposition}[theorem]{Proposition}
\newtheorem{lemma}[theorem]{Lemma}
\newtheorem{corollary}[theorem]{Corollary}
\theoremstyle{definition}
\newtheorem{definition}[theorem]{Definition}
\theoremstyle{remark}

\newcommand{\R}{\mathbb R}
\newcommand{\N}{\mathbb N}

\newcommand{\E}{\mathbb E}
\newcommand{\Pp}{\mathbb P}

\newcommand{\cH}{\mathfrak H}

\newcommand{\ip}[2]{\left\langle #1,#2\right\rangle}
\newcommand{\norm}[1]{\left\lVert #1\right\rVert}
\newcommand{\abs}[1]{\left\lvert #1\right\rvert}
\newcommand{\dd}{\,\mathrm d}

\title[Optimal Sobolev Rate for Density Approximation]{Optimal Sobolev Rate for Gaussian Density Approximation of Wiener Chaos Vectors}

\author{Huiping Chen}
\address{Huiping Chen, Academy of Mathematics and Systems Science, Chinese Academy of Sciences, Beijing 100190, China}
\email{chenhp@amss.ac.cn}

\author{Yong Chen}
\address{Yong Chen, School of Big Data, Baoshan University, Baoshan, Yunnan 678000, China}
\email{zhishi@pku.org.cn; chenyong77@gmail.com}

\author{Yong Liu}
\address{Yong Liu, LMAM, School of Mathematical Sciences, Peking University, Beijing, 100871, China}
\email{liuyong@math.pku.edu.cn}

\subjclass[2020]{60F05, 60G15, 60H05, 60H07}
\keywords{Density approximation, Gaussian interpolation, Malliavin calculus, non-smooth probability metrics, optimal rate, Sobolev norm, Wiener chaos}

\thanks{H. Chen is supported by the China Postdoctoral Science Foundation under Grant Number 2024M763480. Y. Chen is supported by NSFC (No. 12461029) and the PhD Research Startup Fund of Baoshan University (No. BSKY2540). Y. Liu is supported by NSFC (No. 12231002) and the Center for Statistical Science, PKU. The authors used OpenAI's ChatGPT as an auxiliary tool for checking mathematical arguments, language editing and \LaTeX{} template adaptation. All AI-generated suggestions were subsequently verified by the authors, who take full responsibility for the manuscript.}

\begin{document}

\begin{abstract}
	Let $(F_n)$ be a sequence of random vectors with identity
	covariance matrix whose components belong to the same fixed Wiener chaos,
	and assume that $F_n$ converges in law to a standard Gaussian vector.
	We prove that, for every integer $m\geq0$ and every $p\in[1,\infty]$,
	the optimal rate of convergence of the densities in the Sobolev space
	$W^{m,p}$ is given by the maximum of the absolute third-order cumulants
	and the diagonal fourth-order cumulants. The same quantity also gives the optimal rates in total variation, Kolmogorov and $1$-Wasserstein distances. Our proof first derives, by Gaussian interpolation and Gaussian convolution, a cumulant expansion in the space of tempered distributions without imposing Malliavin nondegeneracy at the endpoint. Finite-order Malliavin density estimates then upgrade this identity to Sobolev spaces. Matching lower bounds follow from a finite-dimensional argument in which parity separates the third-order and fourth-order Gaussian
	corrections, while norm equivalence rules out cancellations among
	mixed cumulants. A superconvergence theorem supplies the required finite negative
	moments of the Malliavin determinant along a sufficiently far tail
	of the approximating sequence, so that no Malliavin nondegeneracy
	assumption is required.
\end{abstract}

\maketitle

\tableofcontents

\section{Introduction}\label{sec:introduction}
	
	\subsection{Main results and proof strategy}
	Let $X=\{X(h):h\in\mathfrak H\}$ be an isonormal Gaussian process over a real separable Hilbert space $\mathfrak H$. A central problem in Gaussian analysis is to determine optimal rates of Gaussian approximation for Wiener chaos vectors, especially in strong modes such as density norms and non-smooth probability metrics. The Malliavin--Stein method is well suited to smooth test functions, for which repeated chain-rule arguments expose the cumulants governing the leading error, whereas fewer iterations are available for less regular tests. In the multidimensional setting this leads, in general, to suboptimal bounds; see \cite{Chen2024}, whose optimal result is formulated for suitable smooth distances. In one dimension, passing to non-smooth objects also requires additional ideas; for instance, \cite{NP2015} uses Lusin's theorem to treat total variation, while \cite{EbinaNourdinPeccati2025} develops a generalized functional framework to make singular test objects, such as Dirac masses and their derivatives, compatible with the Malliavin–Stein	and cumulant expansion machinery.
	
	The present paper takes a different route. We combine Gaussian interpolation and Gaussian smoothing to expand the difference between the law of a Wiener chaos vector and the Gaussian law directly in the space of tempered distributions. Finite-order Malliavin density estimates upgrade this identity to Sobolev spaces. For vectors with identity covariance, components in the same fixed Wiener chaos, and a standard Gaussian limit, we show that the optimal rate of convergence of their densities in Sobolev norms is the maximum of the absolute third-order cumulants and the diagonal fourth-order cumulants, with no additional Malliavin nondegeneracy assumption. The same finite-dimensional detection argument applied to the leading Gaussian correction yields matching lower bounds in total variation, Kolmogorov and $1$-Wasserstein distances. Thus the optimal bounds for these non-smooth metrics arise as a direct consequence of the density-level analysis, and no separate Stein argument for non-smooth test functions is needed.
	
	For $k\in\mathbb N$, write $[d]^k:=\{1,\ldots,d\}^k$, and for an ordered index $\alpha\in[d]^k$, let $\kappa_\alpha(F)$ be the corresponding joint cumulant of a random vector $F$; see Section~\ref{sec:preliminaries} for the definitions. For $F=(F_1,\ldots,F_d)$ with finite fourth moments, set 
	\[
	\Delta_3(F)
	:=
	\max_{\alpha\in[d]^3}|\kappa_\alpha(F)|,
	\qquad
	\Delta_4(F)
	:=
	\max_{1\leq i\leq d}\kappa_{iiii}(F),	\qquad 	M(F):=\max\{\Delta_3(F),\Delta_4(F)\}.
	\] 
	When the components of $F$ are multiple Wiener--It\^o integrals, the diagonal fourth-order cumulants $\kappa_{iiii}(F)$, $1\leq i\leq d$, are nonnegative, hence $\Delta_4(F)\geq0$.
	
	For $m\in\mathbb N_0:=\mathbb N\cup\{0\}$ and $p\in[1,\infty]$, let $W^{m,p}(\mathbb R^d)$ be the usual Sobolev space of $f\in L^p(\mathbb R^d)$ such that $\partial_\alpha f:=\partial_{\alpha_1}\cdots\partial_{\alpha_k}f\in L^p(\mathbb R^d)$ for $\alpha=(\alpha_1,\ldots,\alpha_k)\in[d]^k$, $1\leq k\leq m$, with derivatives understood in the distributional sense. We equip $W^{m,p}(\mathbb R^d)$ with the norm
	\[
	\norm{f}_{W^{m,p}(\mathbb R^d)}:=\sum_{k=0}^m\sum_{\alpha\in[d]^k}\norm{\partial_\alpha f}_{L^p(\mathbb R^d)},
	\]
	with the convention $[d]^0=\{\varnothing\}$ and $\partial_\varnothing f=f$. For $p=\infty$, the $L^\infty$ norm is understood as the essential
	supremum. Let $I_q(f)$ be the $q$-th Wiener--It\^o integral of $f\in\mathfrak H^{\odot q}$, where
	$\mathfrak H^{\odot q}$ is the $q$-th symmetric tensor product of
	$\mathfrak H$. Let $\mathcal N_d(0,I_d)$ be the standard Gaussian law on $\mathbb R^d$, where $I_d$ is the $d\times d$
	identity matrix, and write $\operatorname{Cov}(F)$ and $\xrightarrow{d}$ for the covariance matrix of $F$ and convergence in law. One of our main results is the following.
	
	\begin{theorem}\label{thm:main}
		Fix $d\geq1$ and $q\geq2$. For $n\geq1$, let $F_n=(F_{n,1},\ldots,F_{n,d})$ with $F_{n,i}=I_q(f_{n,i})$, $f_{n,i}\in\mathfrak H^{\odot q}$, $1\leq i\leq d$. Assume $\operatorname{Cov}(F_n)=I_d$ and $F_n\xrightarrow{d}\mathcal N_d(0,I_d)$. Then, for every $m\in\mathbb N_0$ and every $p\in[1,\infty]$, there
		exist constants $0<c\leq C<\infty$ and an integer $n_0$ such that,
		for every $n\geq n_0$, $F_n$ has a density
		$p_{F_n}\in W^{m,p}(\mathbb R^d)$ and
		\[
		cM(F_n)
		\leq
		\norm{p_{F_n}-\phi}_{W^{m,p}(\mathbb R^d)}
		\leq
		CM(F_n),
		\]
		where $\phi(x)=(2\pi)^{-d/2}\exp(-|x|^2/2)$, $x\in\mathbb R^d$, is the standard Gaussian density. The constants $c$, $C$ and $n_0$
		may depend on $d,q,m,p$ and on the sequence $(F_n)$, but not on $n$.
	\end{theorem}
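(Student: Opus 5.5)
Our plan is to prove the upper and lower bounds separately. Both rest on a second-order Edgeworth-type expansion of $p_{F_n}-\phi$:
\[
p_{F_n}-\phi=-\frac1{6}\sum_{\alpha\in[d]^3}\kappa_\alpha(F_n)\,\partial_\alpha\phi+\frac1{24}\sum_{\alpha\in[d]^4}\kappa_\alpha(F_n)\,\partial_\alpha\phi+R_n ,
\]
with a remainder satisfying $\norm{R_n}_{W^{m,p}}=o(M(F_n))$.

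\textbf{Deriving the expansion in $\mathcal S'$.} We first obtain the identity in tempered distributions, without any nondegeneracy assumption. Interpolate $F_t=\sqrt t F_n+\sqrt{1-t}\,Z$, with $Z$ an independent standard Gaussian, and smooth the test function by Gaussian convolution. Then $\frac{d}{dt}\E[\varphi(F_t)]$ can be written through Malliavin integration by parts in terms of $\Gamma$-type operators, namely $\langle DF_i,-DL^{-1}F_j\rangle-\delta_{ij}$. Iterating the chain rule gives two things. The leading terms are the third and fourth cumulants tested against $\partial_\alpha\phi$ (after Gaussian convolution). The remainders involve higher iterated $\Gamma$'s, whose $L^2$ norms are controlled by $\mathrm{Var}\,\Gamma$-type quantities.

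For mixed fourth cumulants we use Cauchy–Schwarz in the form $|\kappa_{ijkl}|\lesssim\max_i\kappa_{iiii}^{1/2}\cdot(\ldots)$, together with the known contraction bounds $\norm{f\otimes_r f}\lesssim\kappa_4^{1/2}$. This shows every remainder is $O(\max_i\kappa_{iiii}(F_n)^{1/2}\cdot M(F_n))=o(M(F_n))$, by the fourth moment theorem. The same argument bounds the mixed fourth cumulants by $C\Delta_4$.

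\textbf{Upgrading to $W^{m,p}$.} The Gaussian terms clearly lie in every $W^{m,p}$. For the remainder we need the paper's superconvergence result: for $n\ge n_0$, the Malliavin matrix $\gamma_{F_n}$ has $\E[\det\gamma_{F_n}^{-r}]$ bounded uniformly for some large finite $r$.

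With this, the standard finite-order Malliavin density estimates give a density $p_{F_n}$ with uniform Sobolev bounds of arbitrarily high order and polynomial decay. Write the remainder as a Gaussian convolution plus a regularization error. Interpolating between the weak ($\mathcal S'$, negative-order) bound, which is $o(M)$, and uniformly bounded high-order weighted Sobolev norms then yields $\norm{R_n}_{W^{m,p}}\le C\,o(M)$ for each $p\in[1,\infty]$. This step is where we expect the main obstacle. The delicate point is keeping the interpolation loss small enough to retain the $o(M)$ order, which may force a higher-order expansion so that the remainder is $O(M^{1+\epsilon})$ before interpolation. The upper bound then follows because all cumulant coefficients are $\le CM(F_n)$.

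\textbf{Lower bound.} Set $G_n=\sum_\alpha \kappa_\alpha\partial_\alpha\phi/M(F_n)$, the normalized correction, with coefficients in a compact set. The third-order part is odd and the fourth-order part is even, so the two are orthogonal in the parity decomposition. The coefficients are symmetric tensors. The map sending a symmetric tensor $T$ to $\sum_\alpha T_\alpha\partial_\alpha\phi$ is injective (it is a Hermite polynomial times $\phi$), so by finite-dimensional norm equivalence
\[
\norm{G_n}_{L^1}\ge c\max\bigl(\max|\kappa^{(3)}|,\ \max|\kappa^{(4)}|\bigr)/M \ge c\, ,
\]
and this holds in every $W^{m,p}$ norm. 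Since the $W^{m,p}$ norm dominates the $L^p$ norm, the lower bound follows from $\norm{p_{F_n}-\phi}\ge M\bigl(\norm{G_n}-o(1)\bigr)$.
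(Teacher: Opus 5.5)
Your lower bound is the paper's argument: parity splitting plus norm equivalence, as in Lemma~\ref{lem:finite-dimensional-detection}. Your upgrade to $W^{m,p}$ is a genuinely different route that can be made to work. However, the remainder estimate it rests on is not established by your sketch.

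\textbf{How the two upgrades differ.} The paper never interpolates between norms. It reads the $\mathscr S'$ identity as an identity between time integrals of derivatives of \emph{weighted} densities $p_{n,t,G}$, with $G=1$ or $G=\Gamma_\eta(F_n)$. It then proves $\norm{p_{Y,G}}_{W^{s,p}}\le C\norm{G}_{d+s,P}$ uniformly along the path, using $\gamma_{F_t}=t\gamma_F+(1-t)I_d$ and the superconvergence negative moments. Finite-chaos hypercontractivity gives $\norm{\Gamma_\eta(F_n)}_{d+s,P}\lesssim\E|\Gamma_\eta(F_n)|$. So the smallness of the weights passes into $W^{m,p}$ without loss. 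The upper bound comes straight from the third-order identity, and the second-order remainder is $O(M^2+\Delta_4^{5/4})$ directly in $W^{m,p}$.

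Your scheme needs only unweighted density bounds, and the step you flag as the main obstacle does go through. Suppose $|\langle R_n,g\rangle|\le\delta_n\norm{g}_{C_b^k}$ and $(1+|x|)^a|\partial_\beta R_n(x)|\le C$ for $|\beta|\le s$. Mollify with a kernel having vanishing moments, and split $\{|x|\le L\}$ from its complement. This gives $\norm{R_n}_{W^{m,p}}\lesssim\delta_n^{\lambda}$ with $\lambda\to1$ as $s,a\to\infty$. Hence $\delta_n=O(M^{1+\epsilon})$ suffices, and no higher-order expansion is needed. The cost is a larger $n_0$, and you need the second-order expansion even for the upper bound.

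\textbf{The gap.} The gap is the bound $\delta_n=O(M^{1+\epsilon})$ itself. The remainder involves absolute moments of iterated $\Gamma$'s, not just their expectations:
\begin{itemize}
\item $\E|\Gamma_\eta(F_n)|$ with $|\eta|=4$, needed for the smooth-distance bound $O(M)$, uniformly in $t$, when you replace $F_t$ by $Z$ in the cumulant terms;
\item $\E|\Gamma_\theta(F_n)|$ with $|\theta|=5$.
\end{itemize}
Cauchy--Schwarz with $\norm{f\otimes_rf}\lesssim\kappa_4^{1/2}$ handles the cumulants themselves. It does not give $o(M)$ for $\E|\Gamma_\theta|$. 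Even granting the fourth-order bound, the direct estimate $\E|\Gamma_{(\eta,j)}|\le\sqrt q\,\norm{\Gamma_\eta-\E\Gamma_\eta}_{2}$ yields only $O(\Delta_4)$. That is not $o(M)$ when $M\asymp\Delta_4$, for instance when $q$ is odd and all third cumulants vanish.

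What is required is the nontrivial estimate of \cite[Proposition~3.9]{Chen2024}, which the paper imports in Proposition~\ref{prop:Gamma-Sobolev-Chen}:
\[
\max_{|\eta|=4}\E|\Gamma_\eta(F)|\le C\Delta_4(F),\qquad \max_{|\theta|=5}\E|\Gamma_\theta(F)|\le C\Delta_4(F)^{5/4}.
\]
Your claimed rate $O(\Delta_4^{1/2}M)$ is stronger than this and is not supported by your argument. With the correct input, and since $\Delta_4\le M$, the weak remainder is $O(M^2+\Delta_4^{5/4})=O(M^{5/4})$. Your interpolation argument then closes.
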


	The density estimates developed here also yield the optimal
	rate in three standard probability metrics defined below. For
	random vectors $F$ and $G$, define the total variation
	distance by
	\[
	d_{\mathrm{TV}}(F,G)
	:=
	\sup_{A\in\mathcal B(\mathbb R^d)}
	\left|
	\Pp(F\in A)-\Pp(G\in A)
	\right|,
	\]
	where $\mathcal B(\mathbb R^d)$ denotes the Borel $\sigma$-field of $\mathbb R^d$, and the Kolmogorov distance by
	\[
	d_{\mathrm {Kol}}(F,G)
	:=
	\sup_{x\in\mathbb R^d}
	\left|
	\Pp(F_1\leq x_1,\ldots,F_d\leq x_d)
	-
	\Pp(G_1\leq x_1,\ldots,G_d\leq x_d)
	\right|.
	\]
	Whenever $F$ and $G$ have finite first moments, define the
	$1$-Wasserstein distance by
	\[
	d_{\mathrm W}(F,G)
	:=
	\sup_{\operatorname{Lip}(h)\leq1}
	\left|
	\E[h(F)]-\E[h(G)]
	\right|,
	\qquad
	\operatorname{Lip}(h)
	:=
	\sup_{x\neq y}
	\frac{|h(x)-h(y)|}{|x-y|},
	\]
	where the supremum is over all real-valued Lipschitz functions on
	$\mathbb R^d$. We use the same notation when one or both arguments are probability laws.
	
	\begin{corollary}\label{cor:probability-metrics}
		Under the assumptions of Theorem~\ref{thm:main}, for every $d_* \in \{d_{\mathrm{TV}},d_{\mathrm{Kol}},d_{\mathrm W}\}$, there exist constants
		$0<c\leq C<\infty$ and an integer $n_1$ such that, for every
		$n\geq n_1$,
		\[
		cM(F_n)
		\leq
	d_{*}(F_n,\mathcal N_d(0,I_d))
		\leq
		CM(F_n).
		\]
	\end{corollary}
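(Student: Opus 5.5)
The plan is to deduce both bounds from Theorem~\ref{thm:main} together with the finite-dimensional structure of the leading Gaussian correction; no new Stein-type argument for non-smooth test functions is needed.

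\emph{Upper bounds.} Take $n\geq n_0$. For total variation we use $d_{\mathrm{TV}}(F_n,\mathcal N_d(0,I_d))=\tfrac12\norm{p_{F_n}-\phi}_{L^1}\leq \tfrac12\norm{p_{F_n}-\phi}_{W^{0,1}}$. Theorem~\ref{thm:main} with $m=0$, $p=1$ then gives the bound $CM(F_n)$. For Kolmogorov we simply note $d_{\mathrm{Kol}}\leq d_{\mathrm{TV}}$, since orthants are Borel sets. The Wasserstein distance needs more care because $L^1$ control of the density does not by itself control first moments. We write
\[
d_{\mathrm W}(F_n,\mathcal N_d(0,I_d))\leq\int_{\R^d}|x|\,|p_{F_n}(x)-\phi(x)|\dd x,
\]
using that $h(0)$ can be subtracted. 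We split the integral at $|x|=R$. On the ball the integrand is at most $R\norm{p_{F_n}-\phi}_{L^1}$. Outside the ball we have two options. One is to use a weighted version of the distributional expansion behind Theorem~\ref{thm:main}. The other, simpler one is Cauchy--Schwarz in the form $\int|x|\,|g|\leq (\int(1+|x|^2)^{d+1}|g|^2)^{1/2}(\int(1+|x|^2)^{-d})^{1/2}$, applied to $g=p_{F_n}-\phi$. This requires a weighted $L^2$ bound of order $M(F_n)$. I expect to obtain it from the same cumulant expansion: the Gaussian correction terms are Hermite polynomials times $\phi$, and the remainder has uniformly bounded polynomial moments because of hypercontractivity on a fixed chaos. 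If the weighted estimate is not available directly, a fallback is available. Interpolate $|x||g|\leq |g|^{1/2}\,(|x|^2|g|^{1/2})$ and apply Hölder. This gives $\norm{g}_{L^1}^{1/2}$ times a uniformly bounded moment, but only the rate $M^{1/2}$, so it does not suffice. The real route is therefore the weighted form of the expansion. I regard this step as the main technical obstacle on the upper-bound side.

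\emph{Lower bounds.} The lower bound for $d_{\mathrm W}$ and $d_{\mathrm{TV}}$ follows from the lower bound for $d_{\mathrm{Kol}}$ together with a smoothing argument, so the core is to test against specific functions. The expansion gives
\[
p_{F_n}-\phi=\sum_{\alpha\in[d]^3}\tfrac{\kappa_\alpha(F_n)}{3!}(-1)^3\partial_\alpha\phi+\sum_{\beta\in[d]^4}\tfrac{\kappa_\beta(F_n)}{4!}\partial_\beta\phi+o(M(F_n)).
\]
Here the remainder is small in the distribution sense, uniformly against a fixed finite family of test functions, which is the content of the expansion established for Theorem~\ref{thm:main}. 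To detect the correction we choose a finite-dimensional space of test objects: indicators of orthants in the Kolmogorov case, and $1$-Lipschitz truncated polynomials or their mollified versions in the Wasserstein case. The correction is odd in the third-order part and even in the fourth-order part. Testing against even and odd functions separately, using $x\mapsto -x$, therefore separates the two parts.

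On the finite-dimensional space of coefficient vectors $(\kappa_\alpha)_\alpha$, the map to the resulting linear functional is injective: the Hermite functions $\partial_\alpha\phi$ are linearly independent modulo symmetry. Hence, by equivalence of norms in finite dimension, $\sup$ over the chosen tests of the absolute value of the functional is $\geq c\max|\kappa|$. One step still has to be checked: that the fourth-order coefficients combine into a quantity comparable with $\Delta_4$. In the fixed chaos, $\kappa_{iijj}$ and the other mixed fourth cumulants are controlled by the diagonal ones, $|\kappa_\beta|\leq C\Delta_4$, which is a known fourth-moment-type bound. Combined with $\kappa_{iiii}\geq 0$, this rules out cancellation and gives the lower bound $cM(F_n)-o(M(F_n))$ for $n\geq n_1$. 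For total variation the lower bound then follows from $d_{\mathrm{TV}}\geq d_{\mathrm{Kol}}$. For Wasserstein we take the detecting functions to be smooth, compactly supported and Lipschitz, which removes any need for regularity of the test.
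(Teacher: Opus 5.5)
You have a genuine gap in the upper bound for $d_{\mathrm W}$; everything else essentially matches the paper.

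\textbf{What matches.} Your upper bounds for $d_{\mathrm{TV}}$ and $d_{\mathrm{Kol}}$ are the paper's. Your lower-bound mechanism is also the paper's. The remainder $R_n$ of Proposition~\ref{prop:density-Edgeworth} satisfies $\norm{R_n}_{L^1}\le C(M(F_n)^2+\Delta_4(F_n)^{5/4})=o(M(F_n))$, so it is negligible against every bounded test, not only against a finite family. The leading term is then handled by three facts:
\begin{itemize}
\item the coefficient map into $\mathcal V$ is injective;
\item norms on $\mathcal V$ are equivalent (the paper uses $\|\cdot\|_{\mathrm{TV}}$, $\|\cdot\|_{\mathrm{Kol}}$, $\|\cdot\|_{\mathrm{BL}}$);
\item $\max_\eta|\kappa_\eta|\ge\Delta_4$, because $\kappa_{iiii}\ge0$.
\end{itemize}
Together these give $\gtrsim M(F_n)$ for $\mathcal E_{F_n}$.

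\textbf{Two small corrections.} First, the $d_{\mathrm W}$ lower bound does not follow from the $d_{\mathrm{Kol}}$ one by smoothing. Smoothing indicators costs a square root, roughly $d_{\mathrm{Kol}}\lesssim d_{\mathrm W}^{1/2}$, so the rate is lost. What works is your final choice of compactly supported $1$-Lipschitz tests, which is exactly the paper's $\|\cdot\|_{\mathrm{BL}}$. Second, cancellation is ruled out by norm equivalence; the bound $|\kappa_\beta|\le C\Delta_4$ is only needed to control the remainder.

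\textbf{The gap.} You explicitly leave the $d_{\mathrm W}$ upper bound open. The weighted $L^2$ estimate of order $M(F_n)$ is never proved. Uniformly bounded polynomial moments alone give nothing of order $M(F_n)$, as your own $M^{1/2}$ fallback shows.

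The paper closes this with Proposition~\ref{prop:polynomial-rate}.
\begin{itemize}
\item It starts from the $M=3$ interpolation identity for $p_{n,u}-\phi$.
\item It applies the oriented density formula of Proposition~\ref{prop:weighted-density-formula}, choosing the orthant by the signs of $x$. On the relevant event this forces $|x|\le|Y|$, so $(1+|x|)^a$ can be moved inside the expectation as $(1+|Y|)^a$.
\item H\"older then bounds $(1+|x|)^a|\partial_\eta p_{n,t,\Gamma_\eta(F_n)}(x)|$ by $C\norm{\Gamma_\eta(F_n)}_{d+4,P}\le C\Delta_4(F_n)$.
\item This yields $\sup_x(1+|x|)^a|p_{F_n}(x)-\phi(x)|\le CM(F_n)$, and hence $d_{\mathrm W}\le CM(F_n)\int|x|(1+|x|)^{-a}\dd x$ for $a>d+1$.
\end{itemize}

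Alternatively, your own ingredients suffice if you interpolate on $R_n$ rather than on $p_{F_n}-\phi$. Eventually $\norm{R_n}_{L^1}\le CM(F_n)^{5/4}$, which is superlinear. Also $\int|x|^k|R_n(x)|\dd x\le C_k$ uniformly: use hypercontractivity for $p_{F_n}$, and direct computation for $\phi$ and for $\mathcal E_{F_n}$ (whose cumulants are bounded). H\"older then gives
\[
\int|x||R_n(x)|\dd x\le\norm{R_n}_{L^1}^{1-1/k}\Bigl(\int|x|^k|R_n(x)|\dd x\Bigr)^{1/k}\le CM(F_n)^{\frac54(1-\frac1k)}\le CM(F_n)
\]
for $k\ge5$. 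Meanwhile $\int|x||\mathcal E_{F_n}(x)|\dd x\le CM(F_n)$ directly, using $|\kappa_\alpha|\le\Delta_3$ and $|\kappa_\eta|\le C\Delta_4$. This route avoids the weighted density estimate altogether and only needs the $L^1$ Edgeworth expansion.
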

	
	The proof of Corollary~\ref{cor:probability-metrics} is given in Section~\ref{sec:non-smooth-metrics}. A useful point is that all three lower bounds follow from the same finite-dimensional detection argument. 
	
	We next describe the proof strategy. First, we use Gaussian interpolation. Let $Z$ be an auxiliary standard Gaussian vector independent of $F$ and set $F_t=\sqrt t\,F+\sqrt{1-t}\,Z$, $0\leq t\leq1$; see, for instance, \cite{NourdinPeccatiViens2014} for the smart-path
	method in Gaussian approximation. Differentiating expectations along
	this path and iterating the Malliavin integration by parts formula
	produce a test-function expansion involving cumulants and iterated
	$\Gamma$-variables; see Proposition~\ref{prop:general-interpolation}. Gaussian smoothing for $t<1$ then turns the expansion into an identity for the difference between the law of $F$ and the standard Gaussian law
	in the space of tempered distributions $\mathscr S'(\mathbb R^d)$; see Theorem~\ref{thm:density-interpolation-distribution}. Importantly, neither the existence of a density for $F$ nor Malliavin nondegeneracy of $F$ is required at this
	stage. 	

	This distributional interpolation identity is the point at which our
	method differs from the Malliavin--Stein approach based on generalized functionals used in \cite{EbinaNourdinPeccati2025} to solve the corresponding optimal rate problem in the scalar setting. More precisely, the
	approach of \cite{EbinaNourdinPeccati2025} represents derivatives of the
	density through compositions of Dirac distributions with Wiener
	functionals, solves the corresponding Stein equation in $\mathscr S'(\mathbb R)$, and develops Edgeworth-type expansions within this generalized functional framework. Their upper bound follows from such an expansion, while the lower bound is obtained by retaining one further cumulant order and exploiting the resulting scalar Gaussian
	correction. Here the distributional density identity is
	obtained directly from Gaussian interpolation, classical Malliavin
	integration by parts and Gaussian convolution. This approach has both technical and structural advantages. At the
	technical level, it provides an alternative route based entirely on
	classical Malliavin calculus, rather than on the additional
	functional analytic machinery. At the structural level, it cleanly
	separates the distributional interpolation identity from the density
	regularity estimates used to upgrade it to Sobolev spaces.

	Second,  we formulate finite-order density estimates needed for a prescribed Sobolev norm. The resulting distributional identity in Section~\ref{sec:interpolation} contains ordinary and weighted
	densities along the interpolation path. The next step is therefore to
	identify conditions under which these densities belong to the required
	Sobolev space. The Malliavin integration-by-parts machinery for proving existence and regularity of densities is standard; see,
	for instance, \cite[Section~2.1]{nualart2006malliavin} and
	\cite[Chapter~7]{nualartnualart2018}. However, such results are typically
	formulated under a strong Malliavin nondegeneracy condition, requiring
	the inverse Malliavin determinant to have moments
	of all orders. In 
	Section~\ref{sec:weighted-density}, we track the regularity and integrability
	requirements throughout the argument and show explicitly that, for a
	fixed target space $W^{m,p}(\mathbb R^d)$, only finitely many positive
	Malliavin--Sobolev norms and one sufficiently high but finite negative
	moment of the Malliavin determinant are needed. No
	infinite-order nondegeneracy condition is therefore required for a fixed
	Sobolev norm. In Section~\ref{sec:proof-main}, the fixed-chaos structure supplies the
	positive Malliavin--Sobolev bounds, while \cite[Theorem~4]{HMP2024} provides the
	corresponding finite negative moments along a sufficiently far tail
	of $(F_n)$. Proposition~\ref{prop:interpolation-covariance} then
	transfers these bounds uniformly to the whole interpolation path.
	Consequently, for every prescribed $(m,p)$,  the density $p_{F_n}$ belongs to $W^{m,p}(\mathbb R^d)$ and the distributional
	identity obtained in Section~\ref{sec:interpolation} becomes an
	identity in $W^{m,p}(\mathbb R^d)$ for all sufficiently large $n$.

	Third, we establish the matching lower bound, which is intrinsically multidimensional. The fourth-order interpolation expansion yields
	\[
	p_{F_n}-\phi
	=
	-\frac16\sum_{\alpha\in[d]^3}
	\kappa_\alpha(F_n)\partial_\alpha\phi
	+\frac1{24}\sum_{\alpha\in[d]^4}
	\kappa_\alpha(F_n)\partial_\alpha\phi
	+R_n
	=:\mathcal E_{F_n}+R_n,
	\]
	with $\norm{R_n}_{W^{m,p}}\leq C(M(F_n)^2+\Delta_4(F_n)^{5/4})$.
	The main additional difficulty in the multidimensional problem, which
	has no counterpart in the scalar argument of
	\cite{EbinaNourdinPeccati2025}, is therefore not the interpolation
	expansion itself, but the possible cancellation among the mixed
	cumulants in its leading Gaussian correction $\mathcal E_{F_n}$.
	We show that the third-order and
	fourth-order Gaussian corrections in $\mathcal E_{F_n}$ have opposite parity and use a finite-dimensional
	norm-equivalence argument to rule out cancellation among the cumulant coefficients; see Lemma~\ref{lem:finite-dimensional-detection}.
	It follows that $\norm{\mathcal E_{F_n}}_{L^p}\geq cM(F_n)$.
	Since $M(F_n)\to0$, the remainder $R_n$ is of smaller order and can be
	absorbed, yielding the lower bound in Theorem~\ref{thm:main}. 

	\subsection{Related literature}
	
	The Fourth Moment Theorem of Nualart and Peccati \cite{NP2005} states that a normalized sequence in a fixed Wiener chaos converges to a standard Gaussian law exactly when its fourth moment converges to $3$. Shortly afterwards, Peccati and Tudor \cite{Peccati2005} obtained the multidimensional counterpart. Quantitative versions based on Malliavin calculus and Stein's method were developed in, among others, \cite{NP2009,NOREDDINE20111008,NP2010,NPRMul2010}.
	
	A natural problem is then to identify the optimal rate of Gaussian
	approximation. In the one-dimensional setting, Nourdin and Peccati
	\cite{NP2009b} obtained exact Berry--Esseen asymptotics under an
	additional joint convergence assumption, thereby identifying an
	optimal rate in Kolmogorov distance. Bierm\'e, Bonami, Nourdin and
	Peccati \cite{Hermine2012} showed that, for smooth distances, the rate
	is governed by the third and fourth cumulants, while Nourdin and
	Peccati \cite{NP2015} later obtained the corresponding optimal estimate
	in total variation. In the multidimensional setting, Campese
	\cite{Campese2013} derived optimal rates and one-term Edgeworth
	expansions under additional asymptotic assumptions.
	
	More recently, Chen \cite{Chen2024} considered vectors whose components
	belong to the same fixed Wiener chaos and obtained that the optimal rate for suitable smooth distances is equivalent to the quantity $M(F_n)$ used here, without additional
	asymptotic assumptions. The proof
	combines Malliavin calculus, the multidimensional Stein equation and
	cumulant estimates for iterated $\Gamma$-variables, while the lower
	bound is obtained by a suitable choice of smooth test functions. Such
	an argument is naturally adapted to smooth distances, but does not
	directly provide Sobolev density estimates or sharp bounds for the
	non-smooth metrics considered here.

	A different line of work concerns the existence, regularity and
	convergence of densities. Shigekawa \cite{Shigekawa1980} established
	absolute continuity criteria for finite-dimensional distributions of
	Wiener functionals using Malliavin calculus, while Kusuoka
	\cite{Kusuoka1983} obtained related results for systems of multiple
	Wiener--It\^o integrals by an algebraic method. These questions were
	further investigated for random vectors on Wiener chaos by Nourdin,
	Nualart and Poly \cite{NNP2013}; see also Nualart and Tudor
	\cite{NT2017} for pairs of multiple integrals of the same order. 
	Concerning convergence of densities, Nourdin and Poly \cite{NourdinPoly2013} proved total variation convergence on finite sums of chaoses, while Nourdin, Nualart and Poly \cite{NNP2013} obtained multidimensional total variation and $L^p$ density convergence.  Entropic versions of the fourth moment phenomenon were developed by Nourdin, Peccati and Swan \cite{NPS2014}.

	Quantitative estimates at the density level typically
	require additional Malliavin nondegeneracy. Hu, Lu and Nualart \cite{HLN2014} derived uniform bounds for densities and their derivatives under suitable negative moment assumptions on the
	Malliavin determinant. In the present fixed-chaos setting, combining
	their estimates with the asymptotic nondegeneracy of
	\cite{HMP2024} yields a square-root	fourth-cumulant bound. Theorem~\ref{thm:main} sharpens this to
	the optimal scale $M(F_n)$. Hu, Nualart, Tindel and Xu \cite{HNTX2015} subsequently applied this approach to the Breuer--Major setting. Under suitable assumptions, they obtained the required negative moments and hence uniform convergence of the densities and of their derivatives.
	As observed in \cite{HMP2024}, this already
	establishes a superconvergence result for properly normalized Hermite
	sums of stationary Gaussian sequences. Related
	nondegeneracy conditions were used by Nourdin and Nualart \cite{NN2016}
	in their fourth moment theorem for Fisher information. Quantitative density comparisons based on Malliavin calculus were developed by Bally and Caramellino \cite{BC2014}. More recently, Dang and Hu \cite{DangHu2025} obtained density convergence results for Markov diffusion chaoses and Pearson	targets via Stein's method.

	Herry, Malicet and Poly \cite{HMP2024} established a general
	superconvergence theorem on Wiener chaoses, showing that Gaussian
	convergence automatically yields negative moments of every
	finite order for the Malliavin determinant along a sufficiently far
	tail of the sequence. Consequently, convergence in law is upgraded
	to convergence of the densities in Sobolev spaces, without any additional nondegeneracy assumption. Their result, however, does not identify the optimal Sobolev rate
	studied here. In the scalar case, Ebina, Nourdin and Peccati \cite{EbinaNourdinPeccati2025} subsequently identified the optimal $W^{m,p}(\mathbb R)$ rate as the maximum of the absolute third and fourth cumulants. Mansanarez, Poly and Swan \cite{MansanarezPolySwan2025} further
	established arbitrary-order Edgeworth expansions for scalar Wiener
	chaos in total variation, and considered an approximation problem different from the Sobolev distance problem for densities studied here.

	We finally comment on the assumption that all components belong to
	the same Wiener chaos. This assumption is essential for the quantitative estimates of the iterated $\Gamma$-variables. For mixed chaos orders, the contraction structure suggests that, depending on the relative sizes of the chaos orders, higher-order cumulants may be needed. We plan to investigate this problem in future work.
	
	The paper is organized as follows.
	Section~\ref{sec:preliminaries} recalls basic facts on Wiener chaoses,
	Malliavin calculus, cumulants and iterated $\Gamma$-variables.
	Section~\ref{sec:interpolation} develops the Gaussian interpolation
	and derives the corresponding distributional identity in $\mathscr S'(\mathbb R^d)$.
	Section~\ref{sec:weighted-density} establishes finite-order weighted
	density formulas and Sobolev estimates, and records their stability
	along the Gaussian interpolation path.
	Finally, Section~\ref{sec:proof-main} verifies the required finite-order Malliavin conditions for the approximation sequence, upgrades these distributional identities to Sobolev spaces and derives the optimal rates.

	\section{Preliminaries}\label{sec:preliminaries}
	
	We recall standard notation and facts from Wiener chaos, Malliavin calculus and cumulants; see \cite{Ito51,nourdin2012normal,nualart2006malliavin,nualartnualart2018} for further details. 
	
	Let $\mathfrak H$ be a real separable Hilbert space with inner product $\left\langle \cdot, \cdot\right\rangle _\mathfrak{H}$ and norm $\|\cdot\|_{\mathfrak{H}}$. Let $X=\{X(h):h\in\mathfrak H\}$ be an isonormal Gaussian process on a complete probability space $(\Omega,\mathcal F,\mathbb P)$. That is, $X$ is a centered Gaussian family such that $\mathbb{E}\left[ X(h)X(g) \right]=\left\langle h,g\right\rangle _\mathfrak{H}$ for any $h,g\in \mathfrak{H}$. For $q\geq0$, let $\mathcal{H}_q(X)$ be the $q$-th Wiener--It\^{o} chaos of $X$, which is the closed subspace of $L^2(\Omega)$ generated by $\left\{ H_q(X(h)): h\in\mathfrak{H}, \|h\|_{\mathfrak{H}}=1\right\} $, where $H_q(x)$ is the Hermite polynomial of degree $q$ defined by 
	\begin{equation}\label{eq:Hermite-polynomial}
		\exp\left\lbrace tx-\frac{1}{2}t^2\right\rbrace =\sum_{q=0}^{\infty}\frac{t^q}{q!}H_q(x), \qquad t\in\mathbb{R}.
	\end{equation}
	Let $\mathfrak{H}^{\otimes q}$ and $\mathfrak{H}^{\odot q}$ denote the $q$-th tensor and symmetric tensor product of $\mathfrak{H}$. For $q \geq 1$, the mapping $I_q\left(h^{\otimes q}\right):=H_q(X(h))$ for $h\in \mathfrak{H}$ with $\left\| h\right\| _{\mathfrak{H}}=1$ extends to a linear isometry from $\mathfrak{H}^{\odot q}$, equipped with the norm $\sqrt{q!}\|\cdot\|_{\mathfrak{H}^{\otimes q}}$, onto $\mathcal{H}_{q}(X)$. For $q=0$, we write $I_0(c)=c$ for $c \in \mathbb{R}$. For $f\in \mathfrak{H}^{\odot q}$, $I_q(f)$ is called the $q$-th Wiener--It\^o integral of $f$ with respect to $X$. Wiener--It\^{o} chaoses of different orders are orthogonal, that is, for $f \in \mathfrak{H}^{\odot p}$ and $g \in \mathfrak{H}^{\odot q}$, where $p, q \geq 1$, 
	\begin{equation*}
		\mathbb{E}\left[ I_p(f) I_q(g)\right] =
		\begin{cases}
			p! \left\langle f,g\right\rangle_{\mathfrak{H}^{\otimes p}} ,  & {p=q,}\\
			0,  & {p\neq q.}
		\end{cases}	
	\end{equation*}
	The Wiener--It\^o chaos decomposition of $L^2(\Omega, \sigma(X), \mathbb P)$ implies that
	\[L^2(\Omega, \sigma(X), \mathbb P) = \bigoplus_{q=0}^{\infty} \mathcal{H}_{q}(X).\]
	
	Let $\left\lbrace\eta_k \right\rbrace_{k\geq1} $ be a complete orthonormal system in $\mathfrak{H}$. Given $f\in\mathfrak{H}^{\odot p}$, $g\in\mathfrak{H}^{\odot q}$, for $r=0,\dots,p\land q:= \min\{p,q\}$, define the $r$-th contraction of $f$ and $g$ by 
	\begin{equation*}
		f\otimes_rg=\sum_{i_1,\ldots,i_r=1}^{\infty}\left\langle f,\eta_{i_1}\otimes\cdots\otimes \eta_{i_r}\right\rangle _{\mathfrak{H}^{\otimes r}}\otimes\left\langle g,\eta_{i_1}\otimes\cdots\otimes \eta_{i_r}\right\rangle _{\mathfrak{H}^{\otimes r}}\in \mathfrak{H}^{\otimes (p+q-2r)}.
	\end{equation*}
	This contraction $f\otimes_rg$ is not necessarily symmetric. Its symmetrization is denoted by $f\tilde{\otimes}_rg$. The product formula \cite[Proposition 2.7.10]{nourdin2012normal} gives, for $f\in\mathfrak{H}^{\odot p}$ and $g\in\mathfrak{H}^{\odot q}$ with $p,q\geq0$, 
	\begin{equation*}
		I_p(f)I_q(g)=\sum_{r=0}^{p\land q}r!\binom{p}{r}\binom{q}{r}I_{p+q-2r}(f\tilde{\otimes}_rg).
	\end{equation*}

	For $k\in\N_0$, let $C_p^k(\mathbb R^d)$ and $C_b^k(\mathbb R^d)$ be the classes of $k$-times continuously differentiable functions whose partial derivatives up to order $k$ have polynomial growth and are bounded, respectively. We use $C_p^\infty(\mathbb R^d)$ and $C_b^\infty(\mathbb R^d)$ for the corresponding classes of all orders, and $C_c^\infty(\mathbb R^d)$ for the smooth functions with compact support.

	Let $\mathcal S$ be the class of smooth random variables $F=f(X(h_1),\ldots,X(h_j))$, where $j\geq1$, $h_1,\ldots,h_j\in\mathfrak H$, and $f\in C_p^\infty(\mathbb R^j)$. For $F\in \mathcal{S}$, its Malliavin derivative is a $\mathfrak{H}$-valued random element defined as
	\begin{equation*}
		DF=\sum_{i=1}^{j}\frac{\partial f}{\partial x_i}\left(X\left(h_1\right),\dots,X\left(h_j\right)\right)h_i.
	\end{equation*}
	For any $p\ge1$, $D$ is a closable unbounded operator from
	$L^p(\Omega)$ to $L^p(\Omega;\mathfrak H)$. For $k\geq2$, one can define the $k$-th derivative $D^kF\in L^p(\Omega;\mathfrak{H}^{\otimes k})$ by iteration. For $k\in\N_0$ and $p\ge1$, the Malliavin--Sobolev space $\mathbb D^{k,p}$ is the closure of
	$\mathcal S$ under the norm $\|\cdot\|_{k,p}$ given by 
	\begin{equation*}
		\|F\|_{k,p}^{p}=\sum_{i=0}^{k}\mathbb{E}\left(\left\|D^iF\right\|^{p}_{\mathfrak{H}^{\otimes i}}\right).
	\end{equation*} 
	We omit the subscript $k$ when $k=0$. For a separable Hilbert space $V$,
	$\mathbb D^{k,p}(V)$ and $\|\cdot\|_{k,p;V}$ denote the corresponding
	$V$-valued Malliavin--Sobolev space and norm. For finite-dimensional
	vectors and matrices we use the Euclidean and Hilbert--Schmidt norms,
	respectively, and suppress $V$ from the notation when unambiguous. Set  $\mathbb{D}^{\infty}=\bigcap_{k\geq0}\bigcap_{p\geq1}\mathbb{D}^{k,p}$. For every $q\geq1$ and $f\in\mathfrak H^{\odot q}$, one has $I_q(f)\in\mathbb D^\infty$. For $1\leq k\leq q$, $D^kI_q(f)$ is an $\mathfrak H^{\otimes k}$-valued multiple integral of order $q-k$, while $D^kI_q(f)=0$ for $k>q$. The derivative operator $D$ satisfies the chain rule. Specifically, if $\varphi\in C_b^1(\mathbb R^d)$ and $F=\left(F_1, \ldots, F_d\right)\in (\mathbb{D}^{1,2})^d$, then $\varphi(F) \in \mathbb{D}^{1,2}$ and
	\begin{equation}\label{eq:chain-rule}
		D \varphi(F)=\sum_{i=1}^d \frac{\partial \varphi}{\partial x_i}(F) D F_i .
	\end{equation}
	The chain rule still holds if $F \in (\mathbb{D}^{\infty})^d$ and $\varphi\in C_p^1(\mathbb R^d) $. For a random vector $F=(F_1,\ldots,F_d)\in(\mathbb D^{1,2})^d$, its Malliavin matrix is $\gamma_F:=(\langle DF_i,DF_j\rangle_{\mathfrak H})_{1\leq i,j\leq d}$.

	 We use $ \operatorname{Dom} (\cdot)$ to denote the domain of a general operator. Let $\delta:\operatorname{Dom}(\delta)\subset L^2(\Omega;\mathfrak H)\to L^2(\Omega)$ be the divergence operator defined as the adjoint of $D$. Thus $u\in L^2(\Omega;\mathfrak{H})$ belongs to $\operatorname{Dom} (\delta)$, if and only if there exists a constant
	$c_u<\infty$ depending only on $u$, such that 
	\begin{equation*}
		\left|\mathbb{E}\left[\langle D F, u\rangle_{\mathfrak{H}}\right]\right| \leq c_{u} \sqrt{\mathbb{E}[F^{2}]},\qquad  \forall F \in \mathbb{D}^{1,2}.
	\end{equation*}
	In particular, if $u \in \operatorname{Dom} (\delta)$, then $\delta(u)$ is characterized by the following duality relationship
	\begin{equation}\label{eq:ibp}
		\mathbb{E}( F\delta(u))=\mathbb{E}\left(\langle D F, u\rangle_{\mathfrak{H}}\right), \qquad \forall F \in \mathbb{D}^{1,2}.
	\end{equation}
	For $k\in\mathbb N_0$ and $p>1$, the divergence operator $\delta$ is continuous from
	$\mathbb D^{k+1,p}(\mathfrak H)$ into $\mathbb D^{k,p}$, with
	\begin{equation}\label{eq:delta-continuity-H-proof}
		\norm{\delta(u)}_{k,p}
		\leq C_{k,p}\norm{u}_{k+1,p;\mathfrak H}.
	\end{equation}
	
	Let $L=-\sum_{p=0}^{\infty}pJ_p$ be the infinitesimal generator of the
	Ornstein--Uhlenbeck semigroup
	$T_t=\sum_{p=0}^{\infty}e^{-pt}J_p$, where $J_p$ is the orthogonal
	projection onto the $p$-th Wiener chaos. By
	\cite[Proposition~1.4.3]{nualart2006malliavin},
	$F\in\operatorname{Dom}(L)$ if and only if
	$F\in\operatorname{Dom}(\delta D)$, and then
	\begin{equation}\label{eq:delta-DL}
		\delta DF=-LF.
	\end{equation}
	The pseudo-inverse
	$L^{-1}=-\sum_{p=1}^{\infty}p^{-1}J_p$ satisfies, for every
	$F\in L^2(\Omega)$,
	$L^{-1}F\in\operatorname{Dom}(L)$ and
	\begin{equation}\label{LL-1}
		LL^{-1}F=F-\mathbb E[F].
	\end{equation}
	Combining \eqref{eq:ibp}, \eqref{eq:delta-DL} and \eqref{LL-1}, we obtain the following useful lemma.

	\begin{lemma}[{\cite[Lemma~3.1]{NP2010b}}]\label{IBP}
		Suppose that $F \in \mathbb{D}^{1,2}$ and $G \in L^2(\Omega)$. Then, $L^{-1} G \in \mathbb{D}^{2,2}$ and 
		\begin{equation*}
			\mathbb{E}[F G]=\mathbb{E}[F] \mathbb{E}[G]+\mathbb{E}\big[\left\langle D F,-D L^{-1} G\right\rangle_{\mathfrak{H}}\big] .
		\end{equation*}
	\end{lemma}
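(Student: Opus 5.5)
The plan is to derive the identity from the three relations just recalled: the duality \eqref{eq:ibp}, the identity $\delta D=-L$ in \eqref{eq:delta-DL}, and $LL^{-1}G=G-\E[G]$ in \eqref{LL-1}. First I would treat the membership claim. For $G\in L^2(\Omega)$, set $U:=L^{-1}G=-\sum_{p\geq1}p^{-1}J_pG$. Then $\E\big[\|D^kU\|^2\big]$ is computed chaos by chaos. Since $\E\|D^kI_p(g)\|^2=p(p-1)\cdots(p-k+1)\,\E[I_p(g)^2]$, we get for $k\leq 2$
\[
\E\big[\|D^kU\|^2_{\mathfrak H^{\otimes k}}\big]\leq\sum_{p\geq1}\frac{p^k}{p^2}\,\E[(J_pG)^2]\leq \E[G^2]<\infty .
\]
This gives $L^{-1}G\in\mathbb D^{2,2}$. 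It also gives $U\in\operatorname{Dom}(L)$, because $\sum_p p^2\,\E[(J_pU)^2]=\sum_{p\geq1}\E[(J_pG)^2]<\infty$.

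Next comes the identity itself. By \eqref{LL-1}, $G-\E[G]=LU$. By \eqref{eq:delta-DL}, $U\in\operatorname{Dom}(\delta D)$, meaning $DU\in\operatorname{Dom}(\delta)$, and $LU=-\delta(DU)=\delta(-DL^{-1}G)$. Hence
\[
\E[FG]-\E[F]\E[G]=\E\big[F(G-\E[G])\big]=\E\big[F\,\delta(-DL^{-1}G)\big].
\]
Since $F\in\mathbb D^{1,2}$ and $-DL^{-1}G\in\operatorname{Dom}(\delta)$, the duality relation \eqref{eq:ibp} converts the right-hand side into $\E\big[\langle DF,-DL^{-1}G\rangle_{\mathfrak H}\big]$, which is the claim.

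The only delicate point is checking the domain conditions, so that \eqref{eq:delta-DL} and \eqref{eq:ibp} apply for general $G\in L^2$ and not only for finite chaos sums. The chaos-wise estimate above is what settles this. Alternatively, one can first prove the identity for $G$ in a finite sum of chaoses, where everything is explicit, and then pass to the limit $G_N=\sum_{p\leq N}J_pG\to G$ in $L^2$. This uses the bound $\E\|DL^{-1}(G-G_N)\|^2\leq\E[(G-G_N)^2]\to0$ together with Cauchy--Schwarz against $DF\in L^2(\Omega;\mathfrak H)$.
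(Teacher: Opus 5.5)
Your proposal is correct and follows the paper's own route. The paper obtains this lemma, cited from Nourdin--Peccati, precisely by combining the duality \eqref{eq:ibp}, the relation $\delta D=-L$ and $LL^{-1}G=G-\E[G]$; your chaos-wise computation simply makes explicit the domain check $L^{-1}G\in\mathbb D^{2,2}\cap\operatorname{Dom}(L)$ that the paper leaves to the citation.
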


	By Meyer's inequalities \cite[Theorem~1.5.1]{nualart2006malliavin},
	hypercontractivity \cite[Corollary~2.8.14]{nourdin2012normal},
	and the interpolation argument leading to \cite[Equation~(13)]{HMP2024},
	we have the following standard estimate.

	\begin{lemma}
		\label{lem:finite-chaos-lifting}
		Fix $N,r\in\mathbb N_0$ and $p>1$. There exists a constant
		$C=C(N,r,p)<\infty$ such that for every $	Y\in\bigoplus_{j=0}^N\mathcal H_j(X)$, one has
		\begin{equation}\label{eq:finite-chaos-lifting}
			\norm{Y}_{r,p}
			\leq
			C\norm{Y}_{1}.
		\end{equation}
	\end{lemma}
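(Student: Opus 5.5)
The statement to prove is Lemma~\ref{lem:finite-chaos-lifting}: on the finite sum of chaoses $\bigoplus_{j\le N}\mathcal H_j(X)$, the Malliavin--Sobolev norm $\norm{Y}_{r,p}$ is controlled by the $L^1$ norm $\norm{Y}_1=\E|Y|$. I would argue in three steps.

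\textbf{Step 1: reduce $\norm{\cdot}_{r,p}$ to an $L^p$ norm.} By Meyer's inequalities \cite[Theorem~1.5.1]{nualart2006malliavin}, for $p>1$ one has
\[
\norm{Y}_{r,p}\le C_{r,p}\norm{(I-L)^{r/2}Y}_{p}.
\]
Write $Y=\sum_{j=0}^N J_jY$. Then $(I-L)^{r/2}Y=\sum_{j\le N}(1+j)^{r/2}J_jY$.

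\textbf{Step 2: bound the chaos projections.} I would bound each $\norm{J_jY}_p$ by $C\norm{Y}_p$. Hypercontractivity gives $\norm{J_jY}_p\le (p-1)^{j/2}\norm{J_jY}_2$ for $p\ge2$, and orthogonality gives $\norm{J_jY}_2\le\norm{Y}_2\le\norm{Y}_{p}$. For $1<p<2$ I would simply use $\norm{\cdot}_p\le\norm{\cdot}_2$. In either case we get
\[
\norm{Y}_{r,p}\le C\norm{Y}_{\max(p,2)}\le C'\norm{Y}_2,
\]
where the last inequality is hypercontractivity on the finite chaos sum \cite[Corollary~2.8.14]{nourdin2012normal}. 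This constant depends only on $N$ and $p$.

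\textbf{Step 3: pass from $L^2$ down to $L^1$.} This is the only nontrivial point and the step I expect to be the main obstacle, since hypercontractivity only compares higher norms with lower ones down to $L^2$. I would follow the interpolation argument behind \cite[Equation~(13)]{HMP2024}:
\begin{itemize}[leftmargin=*]
\item By Hölder's inequality with exponents chosen so that $\tfrac12=\tfrac{\theta}{1}+\tfrac{1-\theta}{4}$, i.e.\ $\theta=1/3$, we have
\[
\norm{Y}_2\le\norm{Y}_1^{1/3}\norm{Y}_4^{2/3}.
\]
\item Hypercontractivity on $\bigoplus_{j\le N}\mathcal H_j$ gives $\norm{Y}_4\le K_N\norm{Y}_2$, with $K_N$ depending only on $N$.
\item Substituting, $\norm{Y}_2\le K_N^{2/3}\norm{Y}_1^{1/3}\norm{Y}_2^{2/3}$. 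If $Y\neq0$, dividing by $\norm{Y}_2^{2/3}$ yields
\[
\norm{Y}_2\le K_N^{2}\norm{Y}_1.
\]
The case $Y=0$ is trivial.
\end{itemize}

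Combining Steps 1–3 gives $\norm{Y}_{r,p}\le C(N,r,p)\norm{Y}_1$. The constant is uniform in $Y$ because every constant used depends only on $N$, $r$ and $p$.
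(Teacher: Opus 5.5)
Your proof is correct and is essentially the argument the paper has in mind: it cites exactly Meyer's inequalities, hypercontractivity on $\bigoplus_{j\le N}\mathcal H_j(X)$, and the Hölder interpolation of \cite[Equation~(13)]{HMP2024}, which are your Steps 1--3 (in Step 3, $\norm{Y}_2\le\norm{Y}_1^{1/3}\norm{Y}_4^{2/3}$ together with $\norm{Y}_4\le K_N\norm{Y}_2$ gives $\norm{Y}_2\le K_N^2\norm{Y}_1$). The detour through $\norm{Y}_{\max(p,2)}$ in Step 2 is unnecessary, since your chain already yields $\norm{J_jY}_p\le C\norm{Y}_2$ directly, but it does no harm.
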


	We recall the ordered-index notation. For $k\geq1$, let $[d]^k:=\{1,\ldots,d\}^k$. We call $\alpha=(\alpha_1,\ldots,\alpha_k)\in[d]^k$ an ordered index of length $|\alpha|=k$. Set $\partial_\alpha:=\partial_{\alpha_1}\cdots\partial_{\alpha_k}$. If $\beta\in[d]^\ell$ with $\ell\geq1$, set $(\alpha, \beta)=(\alpha_1,\ldots,\alpha_k,\beta_1,\ldots,\beta_\ell)\in[d]^{k+\ell}$. We use $\langle\cdot,\cdot\rangle_{\R^d}$ and $|\cdot|$ for the Euclidean inner product and norm. 

	\begin{definition}\label{def:cumulant}
		Let $F=(F_1,\ldots,F_d)$ be a random vector and let $\alpha=(\alpha_1,\ldots,\alpha_k)\in[d]^k$ with $k\geq1$. Assume that $\E[\abs{F}^k]<\infty$. The joint cumulant indexed by $\alpha$ is
		\begin{equation*}
			\kappa_\alpha(F)
			:=\left.(-\mathrm i)^k\partial_\alpha\log\varphi_F(t)\right|_{t=0},
		\end{equation*}
		where $\varphi_F(t):=\E\big[\exp\bigl(\mathrm i\langle t,F\rangle_{\R^d}\bigr)\big],
		\, t\in\R^d$, is the characteristic function of $F$.
	\end{definition}

	We abbreviate $\kappa_{(\alpha_1,\ldots,\alpha_k)}(F)$ as
	$\kappa_{\alpha_1\cdots\alpha_k}(F)$. For example, $\kappa_i(F)=\E[F_i]$ and $\kappa_{ij}(F)=\E[F_iF_j]-\E[F_i]\E[F_j]$ for $ i,j\in[d]$. Cumulants are symmetric: for
	every permutation $\sigma$ of $\{1,\ldots,k\}$,
	\[
	\kappa_{(\alpha_{\sigma(1)},\ldots,\alpha_{\sigma(k)})}(F)
	=
	\kappa_{(\alpha_1,\ldots,\alpha_k)}(F).
	\] 
	
	\begin{definition}\label{def:Gamma}
		Let $F=(F_1,\ldots,F_d)\in(\mathbb D^{1,2})^d$. Set $\Gamma_i(F):=F_i$ for $i\in[d]$.
		Let $k\geq2$ and $\alpha=(\alpha_1,\ldots,\alpha_k)\in[d]^k$. If $\Gamma_{\alpha_1,\ldots,\alpha_{k-1}}(F)$ is a well-defined element of $L^2(\Omega)$, define
		\begin{equation*}
			\Gamma_\alpha(F)=\Gamma_{\alpha_1,\ldots,\alpha_k}(F)
			:=\left\langle
			DF_{\alpha_k},
			-DL^{-1}\Gamma_{\alpha_1,\ldots,\alpha_{k-1}}(F)
			\right\rangle_{\cH}.
		\end{equation*}
	\end{definition}
	By \cite[Lemma 4.3]{NOREDDINE20111008}, if
	$F\in(\mathbb D^\infty)^d$, then $\Gamma_\alpha(F)\in\mathbb D^\infty$ for every ordered index $\alpha$. If $F=\left(I_{q_1}(f_1), \ldots, I_{q_d}(f_d)\right)$, where each $f_i\in\mathfrak{H}^{\odot q_i}$ and $q_i\geq1$, \cite[Theorem 4.6, Equation (4.29)]{NOREDDINE20111008} implies that, for every $k\geq1$, there exists an integer $N=N(k,q_1,\ldots,q_d)$ such that, for every $\alpha\in[d]^k$,
	\begin{equation}\label{eq:Gamma-finite-chaos}
		\Gamma_\alpha(F)\in\bigoplus_{j=0}^{N}\mathcal H_j (X).
	\end{equation}

	The corresponding cumulant--$\Gamma$ relation is recalled next (see \cite[Theorem 4.4]{NOREDDINE20111008}).

	\begin{theorem}\label{thm:cumulant-Gamma}
		Let $k\geq1$ and $\alpha\in[d]^k$. Assume that $F=\left(F_1, \ldots, F_d\right)\in (\mathbb D^{k,2^k})^d$.
		Then we have
		\begin{equation*}
			\kappa_\alpha(F)=(k-1)!\,\E[\Gamma_\alpha(F)].
		\end{equation*}
	\end{theorem}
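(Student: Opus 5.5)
We prove Theorem~\ref{thm:cumulant-Gamma} by induction on $k$, using the recursive structure of $\Gamma_\alpha$ together with Lemma~\ref{IBP}. The link between moments and cumulants is the classical recursion
\[
\E[F_{\alpha_1}\cdots F_{\alpha_k}]=\sum_{S\subseteq\{1,\ldots,k-1\}}\kappa_{(\alpha_S,\alpha_k)}(F)\,\E\Big[\prod_{j\in\{1,\ldots,k-1\}\setminus S}F_{\alpha_j}\Big],
\]
obtained by differentiating $\partial_{\alpha_k}\varphi_F=\varphi_F\,\partial_{\alpha_k}\log\varphi_F$ repeatedly at $t=0$. The plan is to establish the analogous identity with $\kappa_{(\alpha_S,\alpha_k)}$ replaced by $(|S|)!\,\E[\Gamma_{(\alpha_S,\alpha_k)}(F)]$, suitably symmetrized. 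Comparing the two recursions by induction on $k$ then identifies the coefficients and gives the theorem.

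\textbf{Step 1 (Gamma-expansion of a moment).} Set $G=\prod_{j<k}F_{\alpha_j}$ and apply Lemma~\ref{IBP} with $F=G$ and $G=F_{\alpha_k}$, after rewriting $\langle DG,-DL^{-1}F_{\alpha_k}\rangle$. A more convenient route is a generating-function version. Fix $t\in\R^d$ with $\langle t,F\rangle$ in a suitable class, and apply Lemma~\ref{IBP} to $e^{\mathrm i\langle t,F\rangle}$ against $\Gamma_{\beta}(F)$. The chain rule \eqref{eq:chain-rule} gives
\[
\E\big[e^{\mathrm i\langle t,F\rangle}\Gamma_\beta\big]=\E[e^{\mathrm i\langle t,F\rangle}]\,\E[\Gamma_\beta]+\mathrm i\sum_{j}t_j\,\E\big[e^{\mathrm i\langle t,F\rangle}\Gamma_{(\beta,j)}\big].
\]
Iterating this starting from $\beta=(\alpha_k)$ expands $\E[F_{\alpha_k}e^{\mathrm i\langle t,F\rangle}]=-\mathrm i\,\partial_{\alpha_k}\varphi_F(t)$ as $\varphi_F(t)$ times a polynomial in $t$ with coefficients $\E[\Gamma_{(\alpha_k,j_1,\ldots,j_r)}]$, plus a remainder of order $|t|^{k-1}$. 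Dividing by $\varphi_F$ near $0$ and matching Taylor coefficients of $\partial_{\alpha_k}\log\varphi_F$ of order $k-1$ gives
\[
\kappa_{(\alpha_k,\gamma)}(F)=\text{(number of orderings)}\cdot\E[\Gamma_{\ldots}],
\]
which summed over permutations yields $(k-1)!$ times a symmetrized average of $\E[\Gamma]$.

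\textbf{Step 2 (symmetry of $\E[\Gamma_\alpha]$ in its entries).} The Taylor coefficient naturally involves the sum over orderings of $(\alpha_1,\ldots,\alpha_{k-1})$. To collapse it to $(k-1)!\,\E[\Gamma_\alpha]$ for the specific order in Definition~\ref{def:Gamma}, I need that $\E[\Gamma_{(\alpha_1,\ldots,\alpha_k)}]$ is invariant under permutations of the first $k-1$ entries, or at least that the appropriate symmetrized identity suffices. I would first try to derive it directly from the expansion, noting that the expansion starting from $\beta=(\alpha_1)$ produces $\Gamma$'s whose first entry is fixed, matching Definition~\ref{def:Gamma}. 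In that case the Taylor coefficient of $\partial_{\alpha_1}\log\varphi_F$ in direction $t_{\alpha_2}\cdots t_{\alpha_k}$ is a sum over orderings of $(\alpha_2,\ldots,\alpha_k)$, and symmetry of cumulants alone does not collapse it. I therefore expect this step to be the main obstacle. A cleaner fix is to note, in the multivariate setting of \cite{NOREDDINE20111008}, that the theorem is applied to all ordered $\alpha$. The identity I will actually prove is $\kappa_\alpha=\sum_{\sigma}\E[\Gamma_{(\alpha_1,\alpha_{\sigma(2)},\ldots)}]$. Separately, invariance of $\E[\Gamma]$ under permutation of the tail entries can be shown by induction on $k$ via Lemma~\ref{IBP} and self-adjointness of $L^{-1}$ on centered variables.

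\textbf{Step 3 (integrability).} The assumption $F\in(\mathbb D^{k,2^k})^d$ is used to guarantee, by induction and H\"older's inequality, that $\Gamma_{\alpha_1\ldots\alpha_j}\in\mathbb D^{k-j+1,2^{k-j+1}}$. This uses the continuity of $DL^{-1}$, via Meyer's inequalities, to keep every step of Lemma~\ref{IBP} legitimate, with $e^{\mathrm i\langle t,F\rangle}$ bounded. It also lets us differentiate the identities in $t$ up to order $k$ with dominated convergence, which justifies the Taylor matching in Step 1.
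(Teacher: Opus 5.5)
The argument fails at Step~2, and the step cannot be repaired. You plan to show, via Lemma~\ref{IBP} and self-adjointness of $L^{-1}$, that $\E[\Gamma_\alpha(F)]$ is invariant under permutations of $(\alpha_2,\ldots,\alpha_k)$. That invariance is false, and so is the statement itself for an individual ordered index, even in the setting of Theorem~\ref{thm:main}.

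Take orthonormal $e_1,e_2\in\mathfrak H$, put $X_i:=X(e_i)$, and let $F_1:=(X_1^2-1)/\sqrt2$ and $F_2:=X_1X_2$. Both lie in $\mathcal H_2(X)$ and $\operatorname{Cov}(F)=I_2$. Using $-DL^{-1}=\frac12D$ on $\mathcal H_2(X)$, one computes
\[
\Gamma_{11}=X_1^2,\qquad \Gamma_{112}=X_1X_2,\qquad \Gamma_{1122}=\tfrac12(X_1^2+X_2^2),\qquad \Gamma_{1212}=\tfrac14(X_1^2+X_2^2),\qquad \Gamma_{1221}=\tfrac12X_1^2.
\]
Hence $\E[\Gamma_{1122}]=1$, while $\E[\Gamma_{1212}]=\E[\Gamma_{1221}]=\frac12$. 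On the other hand, $\kappa_{1122}(F)=\E[F_1^2F_2^2]-\E[F_1^2]\E[F_2^2]-2\E[F_1F_2]^2=5-1=4$, whereas $3!\,\E[\Gamma_{1122}]=6$.

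With components in different chaoses the failure already appears at $k=3$. For $F=(X_1,X_1X_2,X_2)$ one has $\Gamma_{12}=X_2$, $\Gamma_{123}=1$ and $\Gamma_{132}=0$, so $\kappa_{123}=1\neq2=2!\,\E[\Gamma_{123}]$. The structural reason is that, for centered $F$, Lemma~\ref{IBP} gives $\E[\Gamma_{abc}]=\E[F_c\langle DF_b,-DL^{-1}F_a\rangle_{\mathfrak H}]$, and nothing makes this symmetric in $(b,c)$.

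Your Step~1 does deliver a correct identity, provided you fix one point. You must keep the leading term of the last remainder, $\mathrm i^{k-1}\sum_{j_1,\ldots,j_{k-1}}t_{j_1}\cdots t_{j_{k-1}}\E[e^{\mathrm i\langle t,F\rangle}\Gamma_{(\alpha_1,j_1,\ldots,j_{k-1})}]$, rather than discard it, because it contributes exactly to the order-$(k-1)$ coefficient you match. You then justify the expansion with the integrability from Step~3. The result is
\[
\kappa_\alpha(F)=\sum_{\sigma}\E\bigl[\Gamma_{\alpha_1,\alpha_{\sigma(2)},\ldots,\alpha_{\sigma(k)}}(F)\bigr],
\]
with the sum over all permutations $\sigma$ of $\{2,\ldots,k\}$. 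The example above checks: $2(1+\frac12+\frac12)=4$. This permutation-sum form is the correct multivariate cumulant--$\Gamma$ relation, and is the form in \cite{NOREDDINE20111008}. It collapses to $(k-1)!\,\E[\Gamma_\alpha(F)]$ in special cases (e.g.\ $d=1$, $k\leq2$, or constant $\alpha$), but not in general. You should prove this identity and stop there.

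It is also all the paper actually needs:
\begin{itemize}
\item In Proposition~\ref{prop:general-interpolation}, the means $\E[\Gamma_\eta(F)]$, $\eta\in[d]^{r+1}$, appear only paired with the symmetric array $\E[\partial_\eta g(F_t)]$. Fully symmetrizing the permutation-sum identity gives $\sum_\eta\E[\Gamma_\eta(F)]\E[\partial_\eta g(F_t)]=\frac1{r!}\sum_\eta\kappa_\eta(F)\E[\partial_\eta g(F_t)]$.
\item In Proposition~\ref{prop:density-Edgeworth}, only the inequality $\max_\eta|\kappa_\eta(F_n)|\leq 6\max_\eta\E|\Gamma_\eta(F_n)|$ is used, and it also follows from the permutation-sum identity.
\end{itemize}
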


	\section{Gaussian interpolation and density identities}
	\label{sec:interpolation}

	Throughout this section, let $F=(F_1,\ldots,F_d)\in(\mathbb D^\infty)^d$.
	We use Gaussian interpolation to obtain an identity for the
	difference between the law of $F$ and the standard Gaussian law. The
	identity is first established in the sense of tempered distributions,
	without any Malliavin nondegeneracy assumption on $F$. 

	Let $\mathfrak H_0$ be a closed subspace of $\mathfrak H$ such that $F$ is measurable with respect to $\mathcal G_0:=\sigma\{X(h):h\in\mathfrak H_0\}$. If necessary, we enlarge the underlying Hilbert space $\mathfrak{H}$ so that
	$\mathfrak H_0^\perp$ contains an orthonormal family
	$h_1,\ldots,h_d$. We continue to denote the enlarged Hilbert space, the
	isonormal Gaussian process, and the associated Malliavin operators by
	$\mathfrak H$, $X$, $D$, and $L$, respectively. Set \[Z=(Z_1,\ldots,Z_d):=(X(h_1),\ldots,X(h_d))\sim\mathcal N_d(0,I_d).\] Since $h_1,\ldots,h_d\in\mathfrak H_0^\perp$, the Gaussian vector $Z$ is
	independent of $\mathcal G_0$, and hence of $F$. For $F$ and $Z$ introduced above, we define the Gaussian interpolation
	\[
	F_t:=\sqrt t\,F+\sqrt{1-t}\,Z,
	\qquad 0\le t\le1.
	\]

	We record two consequences of this construction of $Z$. Since $F$ is $\mathcal G_0$-measurable, $DF_i\in L^2(\Omega;\mathfrak H_0)$,
	and therefore
	\begin{equation}\label{eq:F-Z-orthogonal}
		\langle DF_i,h_j\rangle_{\mathfrak H}=0
		\qquad\text{a.s.},
		\qquad i,j\in[d].
	\end{equation}
	Moreover, $L^{-1}\bigl(L^2(\Omega,\mathcal G_0, \mathbb P)\bigr)
	\subset L^2(\Omega,\mathcal G_0, \mathbb P)$, while every $G\in\mathbb D^{1,2}$ that is $\mathcal G_0$-measurable
	satisfies $	DG\in L^2(\Omega;\mathfrak H_0)$.
	It follows recursively that, for every ordered index $\alpha$, $\Gamma_\alpha(F)$ is $\mathcal G_0$-measurable and $-DL^{-1}\Gamma_\alpha(F)\in L^2(\Omega;\mathfrak H_0)$. Consequently,
	\begin{equation}\label{eq:Gamma-Z-orthogonal}
		\left\langle
		h_j,-DL^{-1}\Gamma_\alpha(F)
		\right\rangle_{\mathfrak H}
		=0
		\qquad\text{a.s.},
		\qquad j\in[d].
	\end{equation}

	\subsection{Gaussian smoothing along the interpolation path}
	
	For $0\leq t<1$, Gaussian convolution gives smooth densities of $F_t$ independently of any Malliavin nondegeneracy assumption. This provides a convenient density representation for the distributional interpolation argument.
	
	\begin{proposition}
		\label{prop:gaussian-smoothing}
		For every $0\leq t<1$ and $G\in L^1(\Omega,\mathcal G_0)$, the signed measure $\mu_{F_t,G}(\cdot) :=\E\left[ G\mathbf 1_{\{F_t\in \cdot\}} \right] $ has a smooth density
		\begin{align}\label{eq:convolution-weighted-density}
			p_{t,G}(x)
			=
			\E\!\left[G\phi_{1-t}(x-\sqrt t\,F)\right],
		\end{align}
		where $\phi_s(x):=(2\pi s)^{-d/2}\exp(-|x|^2/(2s))$, $s>0$, is the centered Gaussian density with covariance $sI_d$. In the unweighted case $G=1$, we simply write $p_t:=p_{t,1}$ for the
		density of $F_t$.
	\end{proposition}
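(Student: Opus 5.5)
The plan is to compute $\mu_{F_t,G}(A)$ directly by conditioning on $\mathcal G_0$, using the independence of $Z$ from $\mathcal G_0$. Fix $0\le t<1$, set $s:=1-t>0$, and let $A\in\mathcal B(\R^d)$. Since $G$ and $F$ are $\mathcal G_0$-measurable and $Z\sim\mathcal N_d(0,I_d)$ is independent of $\mathcal G_0$, the freezing lemma gives
\[
\E\bigl[G\mathbf 1_{\{F_t\in A\}}\bigr]
=\E\Bigl[G\,\E\bigl[\mathbf 1_{\{\sqrt t\,F+\sqrt s\,Z\in A\}}\,\big|\,\mathcal G_0\bigr]\Bigr]
=\E\Bigl[G\int_A\phi_{s}(x-\sqrt t\,F)\dd x\Bigr].
\]
Here we use that, conditionally on $\mathcal G_0$, $\sqrt t F+\sqrt s Z$ is Gaussian with mean $\sqrt t F$ and covariance $sI_d$.

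Next we exchange expectation and integral by Fubini. This is justified because $|G|\phi_s(x-\sqrt tF)\le (2\pi s)^{-d/2}|G|$ and $\int_{\R^d}\phi_s(x-\sqrt t F)\dd x=1$, so
\[
\E\int_{\R^d}|G|\phi_s(x-\sqrt tF)\dd x=\E|G|<\infty .
\]
Hence $\mu_{F_t,G}(A)=\int_A p_{t,G}(x)\dd x$ with $p_{t,G}$ given by \eqref{eq:convolution-weighted-density}. This also shows that $p_{t,G}\in L^1(\R^d)$ with $\norm{p_{t,G}}_{L^1}\le\E|G|$, so $\mu_{F_t,G}$ is a finite signed measure with this density.

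Finally we establish smoothness by differentiating under the expectation. For every ordered index $\alpha$ one has $\partial_\alpha\phi_s(y)=P_\alpha(y)\phi_s(y)$ with $P_\alpha$ a polynomial, and $\sup_{y}|P_\alpha(y)|\phi_s(y)=:C_{\alpha,s}<\infty$. Thus
\[
|\partial^x_\alpha[G\phi_s(x-\sqrt tF)]|\le C_{\alpha,s}|G|,
\]
and this bound is integrable and uniform in $x$. Dominated convergence, applied inductively to difference quotients together with the mean value theorem, then gives $p_{t,G}\in C^\infty(\R^d)$ with
\[
\partial_\alpha p_{t,G}(x)=\E\bigl[G\,(\partial_\alpha\phi_s)(x-\sqrt tF)\bigr].
\]
Moreover every derivative is bounded and continuous. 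For $G=1$ this is the density $p_t$ of $F_t$. At no point do we need any property of $F$ beyond measurability, so no nondegeneracy is used. The only step needing care is the conditioning/independence, which is guaranteed by the construction of $Z$ from $h_1,\ldots,h_d\in\mathfrak H_0^\perp$; the rest is routine.
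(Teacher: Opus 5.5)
Your proposal is correct and follows essentially the same route as the paper: condition on $\mathcal G_0$, using the independence of $Z$, to obtain the Gaussian-kernel representation via Fubini. Then differentiate under the expectation using the uniform boundedness of $\partial_\alpha\phi_{1-t}$ together with $G\in L^1$. The remaining differences are cosmetic: you work with Borel sets instead of bounded Borel test functions, and you spell out the Fubini justification and the bound $\norm{p_{t,G}}_{L^1}\le\E|G|$, which the paper leaves implicit.
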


	\begin{proof}
		Since $F$ is $\mathcal G_0$-measurable and $Z$ is
		independent of $\mathcal G_0$, conditionally on $\mathcal G_0$, $F_t$ is Gaussian with mean $\sqrt t\,F$ and covariance $(1-t)I_d$. Hence, for every bounded Borel function
		$h$, noting that $G$ is $\mathcal G_0$-measurable and using Fubini's theorem, we know that
		\begin{align*}
			\E[G h(F_t)]
			&=
			\E\left[
			G\E[h(F_t)\mid\mathcal G_0]
			\right]=
			\int_{\mathbb R^d}
			h(x)
			\E\left[
			G\phi_{1-t}(x-\sqrt t\,F)
			\right]\dd x,
		\end{align*}
		which proves \eqref{eq:convolution-weighted-density}. For every ordered index $\beta$ and every fixed $0\leq t<1$, $\|\partial_\beta\phi_{1-t}\|_\infty<\infty$.
		Since $G\in L^1$, the dominated convergence theorem yields
		\[
		\partial_\beta p_{t,G}(x)
		=\E\left[G\partial_\beta\phi_{1-t}(x-\sqrt t\,F)\right].
		\]
		Thus $p_{t,G}\in C^\infty(\R^d)$.
	\end{proof}
	
	\subsection{Density interpolation expansions}
	
	For the remainder of this section, assume $\E[F]=0$ and $\operatorname{Cov}(F)=I_d$. We next derive the interpolation identity with test functions. 

	\begin{proposition}
		\label{prop:general-interpolation}
		Let $M\geq3$ and $g\in C_p^{M+1}(\mathbb R^d)$. Then $t\mapsto\E[g(F_t)]$ is continuously differentiable on $(0,1)$ and
		\begin{equation}\label{eq:general-interpolation}
			\frac{\dd}{\dd t}\E[g(F_t)]
			=
			\sum_{r=3}^{M}
			\frac{t^{(r-2)/2}}{2(r-1)!}
			\sum_{\alpha\in[d]^r}
			\kappa_\alpha(F)\E[\partial_\alpha g(F_t)]
			+
			\frac{t^{(M-1)/2}}2
			\sum_{\alpha\in[d]^{M+1}}
			\E[\Gamma_\alpha(F)\partial_\alpha g(F_t)].
		\end{equation}
	\end{proposition}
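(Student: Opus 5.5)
Differentiating along the interpolation path and splitting $F_t$ into its $F$-part and its $Z$-part gives, for $0<t<1$,
\[
\frac{\dd}{\dd t}\E[g(F_t)]=\sum_{i=1}^d\Bigl(\frac{1}{2\sqrt t}\E[F_i\,\partial_i g(F_t)]-\frac{1}{2\sqrt{1-t}}\E[Z_i\,\partial_i g(F_t)]\Bigr).
\]
This step is justified by dominated convergence: $g\in C_p^{M+1}$ and $F,Z$ have moments of all orders, so the difference quotients are dominated uniformly for $t$ in compact subsets of $(0,1)$. The same argument, applied to the resulting expression, gives continuity of the derivative.

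For the $Z$-term I would use Gaussian integration by parts, which can be realized through Lemma~\ref{IBP} with $Z_i=X(h_i)$ and $-DL^{-1}Z_i=h_i$. Since $D_{h_i}F_t=\sqrt{1-t}\,e_i$ by \eqref{eq:F-Z-orthogonal}, this yields $\E[Z_i\partial_i g(F_t)]=\sqrt{1-t}\,\E[\partial_{ii}g(F_t)]$. For the $F$-term, Lemma~\ref{IBP} with $\E[F_i]=0$ gives
\[
\E[F_i\,\partial_i g(F_t)]=\E\bigl[\langle D\partial_i g(F_t),-DL^{-1}F_i\rangle_{\cH}\bigr]=\sqrt t\sum_j \E[\partial_{ij}g(F_t)\Gamma_{ij}(F)].
\]
Here the chain rule holds because $g\in C_p^{M+1}$ and $F_t\in(\mathbb D^\infty)^d$, and the $Z$-contribution of $DF_t$ is killed by \eqref{eq:Gamma-Z-orthogonal}. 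The derivative therefore equals
\[
\frac12\sum_{i,j}\E\bigl[(\Gamma_{ij}(F)-\delta_{ij})\,\partial_{ij}g(F_t)\bigr].
\]

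Next I would iterate. Writing $\Gamma_\alpha=\E[\Gamma_\alpha]+(\Gamma_\alpha-\E\Gamma_\alpha)$ and applying Lemma~\ref{IBP} to the centered part with the smooth functional $\partial_\alpha g(F_t)$ gives, again via the chain rule and \eqref{eq:Gamma-Z-orthogonal},
\[
\E[\Gamma_\alpha(F)\,\partial_\alpha g(F_t)]=\E[\Gamma_\alpha(F)]\,\E[\partial_\alpha g(F_t)]+\sqrt t\sum_{j}\E[\Gamma_{(\alpha,j)}(F)\,\partial_{(\alpha,j)}g(F_t)].
\]
At length $2$, $\E[\Gamma_{ij}]=\kappa_{ij}=\delta_{ij}$ by Theorem~\ref{thm:cumulant-Gamma} and $\operatorname{Cov}(F)=I_d$, so the Gaussian second-order term cancels exactly. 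I then apply the identity repeatedly for lengths $r=3,\dots,M$. The term of length $r$ carries the factor $t^{(r-2)/2}$, and $\E[\Gamma_\alpha]=\kappa_\alpha(F)/(r-1)!$ by Theorem~\ref{thm:cumulant-Gamma}, applicable since $F\in(\mathbb D^\infty)^d$. This produces the coefficient $t^{(r-2)/2}/(2(r-1)!)$. The process stops with the remainder at length $M+1$ with factor $t^{(M-1)/2}/2$, which is exactly \eqref{eq:general-interpolation}.

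The main obstacle is integrability and regularity bookkeeping rather than the algebra. At the last step $\partial_\alpha g$ with $|\alpha|=M$ is only $C^1_p$, so I need the chain rule for $C_p^1$ functions of $\mathbb D^\infty$ vectors, which the paper states. I also need $\Gamma_\alpha(F)\in\mathbb D^\infty$, from \cite[Lemma 4.3]{NOREDDINE20111008}, so that all products are integrable by Hölder's inequality. Finally, I must verify the orthogonality \eqref{eq:Gamma-Z-orthogonal} at every stage, so that no $h_j$-terms appear in the inner products with the derivative of $F_t$.
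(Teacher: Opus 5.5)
Your proposal is correct and follows essentially the same route as the paper. You differentiate along the path, cancel the second-order Gaussian term using $\E[\Gamma_{ij}(F)]=\delta_{ij}$, iterate Lemma~\ref{IBP} with the chain rule and \eqref{eq:Gamma-Z-orthogonal}, and convert $\E[\Gamma_\alpha(F)]$ into $\kappa_\alpha(F)/(r-1)!$ via Theorem~\ref{thm:cumulant-Gamma}. The only cosmetic difference is that you treat the $Z$-term by Malliavin duality with $-DL^{-1}Z_i=h_i$ rather than by Gaussian integration by parts conditionally on $F$, and this yields the same identity.
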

	
	\begin{proof}
		Set $\Psi_g(t)=\E[g(F_t)]$. Differentiation under the expectation gives
		\[
		\Psi_g'(t)
		=
		\frac1{2\sqrt t}\sum_i
		\E[F_i\partial_i g(F_t)]
		-
		\frac1{2\sqrt{1-t}}\sum_i
		\E[Z_i\partial_i g(F_t)].
		\]
		Gaussian integration by parts in $Z$ yields
		\begin{align*}
			\E[Z_i\partial_i g(F_t)] = \E[ \E[Z_i\partial_i g(F_t) | F ]] = \sqrt{1-t}\, \E[ \E[\partial_{ii}g(F_t) | F ] ]
			=\sqrt{1-t}\,\E[\partial_{ii}g(F_t)].
		\end{align*}
		On the other hand, Lemma~\ref{IBP}, $\E[F]=0$, chain rule \eqref{eq:chain-rule} and \eqref{eq:Gamma-Z-orthogonal} give
		\begin{align*}
			\E[F_i\partial_i g(F_t)]
			&=  \E[F_i] \E[\partial_i g(F_t)] + \E\left[\ip{D (\partial_i g(F_t))}{-D L^{-1}F_i}_{\mathfrak H}\right]\notag\\
			&= \sum_{j=1}^d \E\left[ \partial_{ij} g(F_t) \ip{\sqrt t D F_j + \sqrt{1-t} D Z_j} {-D L^{-1}F_i}_{\mathfrak H}\right] \notag\\
			&=\sqrt t\sum_{j=1}^d \E[\partial_{ij}g(F_t)\Gamma_{ij}(F)].
		\end{align*}
		Since $\E[\Gamma_{ij}(F)]=\kappa_{ij}(F)=\delta_{ij}$, where $\delta_{ij}$ is the Kronecker symbol,
		\begin{equation}\label{eq:Psi-initial}
			\Psi_g'(t)
			=
			\frac12\sum_{\alpha\in[d]^2}
			\E\!\left[
			\bigl(\Gamma_\alpha(F)-\E\Gamma_\alpha(F)\bigr)
			\partial_\alpha g(F_t)
			\right].
		\end{equation}
		For $r\geq2$, set
		\[
		R_r(t)
		:=
		\frac{t^{(r-2)/2}}2
		\sum_{\alpha\in[d]^r}
		\E\!\left[
		\bigl(\Gamma_\alpha(F)-\E\Gamma_\alpha(F)\bigr)
		\partial_\alpha g(F_t)
		\right].
		\]
		By Lemma~\ref{IBP}, chain rule \eqref{eq:chain-rule} and \eqref{eq:Gamma-Z-orthogonal},
		\begin{align*}
			R_r(t)
			&=  \frac{t^{(r-2)/2}}2
			\sum_{\alpha\in[d]^r}  \E\!\left[   \ip{ D\partial_\alpha g(F_t)} {-D L^{-1}  \Gamma_\alpha(F) }_{\mathfrak H}
			\right]  \\
			&=  \frac{t^{(r-2)/2}}2
			\sum_{\eta\in[d]^{r+1}}  \E\!\left[   \ip{ \sqrt t\,DF_{\eta_{r+1}}+\sqrt{1-t}\,h_{\eta_{r+1}}} {-D L^{-1}  \Gamma_{\eta_1,\ldots,\eta_r}(F) }_{\mathfrak H} \partial_\eta g(F_t)
			\right] \\
			&=
			\frac{t^{(r-1)/2}}2
			\sum_{\eta\in[d]^{r+1}}
			\E[\Gamma_\eta(F)\partial_\eta g(F_t)].
		\end{align*}
		If $2\leq r\leq M-1$, splitting $\Gamma_\eta$ into $\Gamma_\eta= \E[\Gamma_\eta(F)] + \Gamma_\eta-\E[\Gamma_\eta(F)] $ and using $\E[\Gamma_\eta(F)]=\kappa_\eta(F)/r!$ from Theorem~\ref{thm:cumulant-Gamma}, we obtain
		\[
		R_r(t)
		=
		\frac{t^{(r-1)/2}}{2r!}
		\sum_{\eta\in[d]^{r+1}}
		\kappa_\eta(F)\E[\partial_\eta g(F_t)]
		+
		R_{r+1}(t).
		\]
		Starting from \eqref{eq:Psi-initial} and iterating this identity for
		$r=2,\ldots,M-1$, while leaving the last representation of $R_M$ unsplit, gives \eqref{eq:general-interpolation}.
	\end{proof}
	
	The preceding formula is a test-function identity and therefore does not require a density at the endpoint $t=1$. Gaussian smoothing for $t<1$ allows us to rewrite it as a distributional identity. Let $\mathscr S(\mathbb R^d)$ denote the Schwartz space and $\mathscr S'(\mathbb R^d)$ its dual, the space of tempered distributions. Let $\mu_F$ denote the law of $F$ and let $\gamma_d$ denote the standard Gaussian measure on $\mathbb R^d$.
	
	\begin{theorem}
		\label{thm:density-interpolation-distribution}
		Let $M\geq3$. Then in $\mathscr S'(\mathbb R^d)$,
		\begin{equation}\label{eq:density-general-distribution}
			\begin{aligned}
				\mu_F-\gamma_d
				={}&
				\sum_{r=3}^{M}
				\frac{(-1)^r}{2(r-1)!}
				\sum_{\alpha\in[d]^r}
				\kappa_\alpha(F)
				\int_0^1t^{(r-2)/2}\partial_\alpha p_t\,\dd t\\
				&+
				\frac{(-1)^{M+1}}2
				\sum_{\alpha\in[d]^{M+1}}
				\int_0^1t^{(M-1)/2}
				\partial_\alpha p_{t,\Gamma_\alpha(F)}\,\dd t .
			\end{aligned}
		\end{equation}
	\end{theorem}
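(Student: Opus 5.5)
Fix a test function $g\in\mathscr S(\mathbb R^d)$; then $g\in C_p^{M+1}(\mathbb R^d)$ with bounded derivatives of every order. We pair both sides of \eqref{eq:density-general-distribution} with $g$ and integrate the test-function identity \eqref{eq:general-interpolation} of Proposition~\ref{prop:general-interpolation} over $t\in(0,1)$.

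First, the left-hand side. Since $F_0=Z$ and $F_1=F$, we have $\langle\mu_F-\gamma_d,g\rangle=\Psi_g(1)-\Psi_g(0)$, where $\Psi_g(t)=\E[g(F_t)]$. The map $t\mapsto\Psi_g(t)$ is continuous on $[0,1]$ by dominated convergence, since $g$ is bounded and continuous. It is $C^1$ on $(0,1)$ by Proposition~\ref{prop:general-interpolation}. The right-hand side of \eqref{eq:general-interpolation} is bounded uniformly in $t\in(0,1)$, because $|\E[\Gamma_\alpha(F)\partial_\alpha g(F_t)]|\le\|\partial_\alpha g\|_\infty\E|\Gamma_\alpha(F)|<\infty$, using $\Gamma_\alpha(F)\in\mathbb D^\infty$, and because $t^{(r-2)/2}\le 1$. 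So $\Psi_g'$ is integrable on $(0,1)$, and the fundamental theorem of calculus applied on $[\varepsilon,1-\varepsilon]$, with $\varepsilon\downarrow0$, gives $\Psi_g(1)-\Psi_g(0)=\int_0^1\Psi_g'(t)\dd t$.

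Second, we rewrite each term of $\Psi_g'(t)$ through the smoothed densities of Proposition~\ref{prop:gaussian-smoothing}. For $t<1$, that proposition (with $G=1$ and with $G=\Gamma_\alpha(F)$, which is $\mathcal G_0$-measurable and integrable) gives
\[
\E[\partial_\alpha g(F_t)]=\int\partial_\alpha g\,p_t\dd x,
\qquad
\E[\Gamma_\alpha(F)\partial_\alpha g(F_t)]=\int\partial_\alpha g\,p_{t,\Gamma_\alpha(F)}\dd x .
\]
Since $p_{t,G}$ is smooth, we integrate by parts $|\alpha|$ times, which produces the factor $(-1)^{|\alpha|}$ and the integrand $g\,\partial_\alpha p_{t,G}$. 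Boundary terms vanish because $g$ is Schwartz and $\partial_\beta p_{t,G}$ is bounded, with $\|\partial_\beta p_{t,G}\|_\infty\le\E|G|\,\|\partial_\beta\phi_{1-t}\|_\infty$. This is exactly $\langle\partial_\alpha p_{t,G},g\rangle=(-1)^{|\alpha|}\langle p_{t,G},\partial_\alpha g\rangle$ in $\mathscr S'$. Inserting these expressions into $\int_0^1\Psi_g'\dd t$ yields the pairing of the right-hand side of \eqref{eq:density-general-distribution} with $g$, once that right-hand side is given a meaning.

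Third, and this is the step we expect to be the main obstacle, we must make sense of $\int_0^1 t^{(r-2)/2}\partial_\alpha p_t\dd t$ as an element of $\mathscr S'$. Pointwise, $\|\partial_\alpha p_t\|_\infty$ blows up like $(1-t)^{-(d+|\alpha|)/2}$ as $t\to1$, so the integral cannot be taken in a strong pointwise sense. We therefore define it weakly, as the functional
\[
g\mapsto\int_0^1 t^{(r-2)/2}(-1)^{|\alpha|}\E[G\,\partial_\alpha g(F_t)]\dd t .
\]
The integrand is measurable in $t$, being continuous by dominated convergence, and it is bounded by $\E|G|\,\|\partial_\alpha g\|_\infty$. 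Hence this functional is linear and continuous on $\mathscr S$, since it is controlled by a single Schwartz seminorm, so it defines a tempered distribution. It coincides with the Pettis/weak integral of the curve $t\mapsto\partial_\alpha p_{t,G}$ in $\mathscr S'$. With this interpretation, the two sides of \eqref{eq:density-general-distribution} agree on every $g\in\mathscr S$, which proves the identity; the sign conventions $(-1)^r$ and $(-1)^{M+1}$ come directly from the integration by parts in the second step.
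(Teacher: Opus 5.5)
Your proposal is correct and follows the paper's route. Like the paper, you integrate the test-function identity of Proposition~\ref{prop:general-interpolation} over $[\varepsilon,1-\varepsilon]$ and let $\varepsilon\downarrow0$ by dominated convergence, rewrite each expectation through the Gaussian-smoothed densities of Proposition~\ref{prop:gaussian-smoothing} with an integration by parts in $x$, and use the bound by $\|\partial_\alpha g\|_\infty$ to show the right-hand side is a continuous linear functional on $\mathscr S(\mathbb R^d)$. Your justification of the limit via continuity of $\Psi_g$ on $[0,1]$ and boundedness of $\Psi_g'$, and your explicit weak definition of the $t$-integrals, are more detailed versions of the same steps.
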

	
	\begin{proof}
		Let $g\in\mathscr S(\mathbb R^d)$. Integrate \eqref{eq:general-interpolation} over $[\varepsilon,1-\varepsilon]$ with $0<\varepsilon<1/2$. 
		Since $F_t\to Z$ as $t\downarrow0$ and $F_t\to F$ as $t\uparrow1$ in $L^q(\Omega)$ for every finite $q$, and since all iterated $\Gamma$-variables have moments of every order, letting $\varepsilon\downarrow0$ and applying the dominated convergence theorem yields
		\[
		\begin{aligned}
			\E[g(F)]-\E[g(Z)]
			={}&
			\sum_{r=3}^{M}
			\frac1{2(r-1)!}
			\sum_{\alpha\in[d]^r}
			\kappa_\alpha(F)
			\int_0^1t^{(r-2)/2}
			\E[\partial_\alpha g(F_t)]\,\dd t\\
			&+
			\frac12
			\sum_{\alpha\in[d]^{M+1}}
			\int_0^1t^{(M-1)/2}
			\E[\Gamma_\alpha(F)\partial_\alpha g(F_t)]\,\dd t.
		\end{aligned}
		\]
		For every $0\le t<1$, by applying Proposition~\ref{prop:gaussian-smoothing} with $G=1$ and $G=\Gamma_\alpha(F)$ and then integrating by parts in $x$, we obtain
		\begin{align*}
			&\E[\partial_\alpha g(F_t)]
			=	\int_{\mathbb R^d}
			\partial_\alpha g(x)p_t(x)\,\dd x=
			(-1)^{|\alpha|}
			\int_{\mathbb R^d}
			g(x) \partial_\alpha p_t(x)\,\dd x,\\
			& \E[
			\Gamma_\alpha(F)\partial_\alpha g(F_t)
			]
			=
			\int_{\mathbb R^d}
			\partial_\alpha g(x)
			p_{t,\Gamma_\alpha(F)}(x)
			\,\dd x =
			(-1)^{|\alpha|} \int_{\mathbb R^d}
			g(x)
			\partial_\alpha
			p_{t,\Gamma_\alpha(F)}(x)
			\,\dd x .
		\end{align*}	
		Since
		\begin{align*}
			\left|
			\E[\partial_\alpha g(F_t)]
			\right|
			\leq
			\norm{\partial_\alpha g}_{L^\infty},\qquad \left|
			\E[
			\Gamma_\alpha(F)\partial_\alpha g(F_t)
			]
			\right|
			\leq
			\E|\Gamma_\alpha(F)|
			\norm{\partial_\alpha g}_{L^\infty}\leq C \norm{\partial_\alpha g}_{L^\infty},
		\end{align*}	
		and all powers of $t$ appearing in
		\eqref{eq:density-general-distribution} are integrable on $(0,1)$, the right-hand side of \eqref{eq:density-general-distribution} defines a continuous linear functional on $\mathscr S(\mathbb R^d)$. This gives \eqref{eq:density-general-distribution} distributionally.
	\end{proof}

	\section{Weighted densities under finite-order regularity}
	\label{sec:weighted-density}

	For a $d$-dimensional random vector $Y$ and a random variable $G\in L^1(\Omega)$, set the finite signed measure $\mu_{Y,G}(A):=\E[G\mathbf 1_{\{Y\in A\}}]$, $A\in\mathcal B(\mathbb R^d)$. Whenever $\mu_{Y,G}$ is absolutely continuous with respect to Lebesgue measure, its density will be denoted by $p_{Y,G}$.

	The interpolation formula of Section~\ref{sec:interpolation} reduces
	the Sobolev approximation problem to the regularity of the ordinary and
	weighted densities along the path. Although Gaussian
	convolution gives smooth densities for every $t<1$, those bounds need
	not remain uniform as $t\uparrow1$. We therefore proceed in two steps.
	First, we give finite-order Malliavin conditions ensuring that the weighted density
	$p_{Y,G}$ belongs to a prescribed Sobolev space $W^{m,p}(\mathbb R^d)$.
	We then return to the Gaussian interpolation and show that these
	conditions, when satisfied at the endpoint, hold uniformly along the
	whole path. 
	
	Let $\mathscr Y$ be a family of $d$-dimensional random vectors with
	components in $\mathbb D^{1,2}$. Let $r\in\mathbb N_0$ and $P,Q>1$. We say that $\mathscr Y$ satisfies the finite-order Malliavin condition $ \mathcal A(r,P,Q)$
	if
	\[
	K_+(r,P):=
	\sup_{Y\in\mathscr Y}\max_{1\leq i\leq d}\norm{Y_i}_{r,P}<\infty,
	\qquad
	K_-(Q):=
	\sup_{Y\in\mathscr Y}\E[(\det\gamma_Y)^{-Q}]<\infty.
	\]
	For a single vector $Y$, we say that $Y$ satisfies $\mathcal A(r,P,Q)$ when the singleton family $\{Y\}$ does. The second condition implies $\det\gamma_Y>0$ almost surely for every $Y\in\mathscr Y$. Constants below are allowed to depend on the corresponding values of $K_+(r,P)$ and $K_-(Q)$.
	
	\subsection{Finite-order estimates for the inverse Malliavin matrix}
	
	The next two results show that a fixed Sobolev norm of $\gamma_Y^{-1}$ requires only one finite positive Malliavin--Sobolev norm of $Y$ and one finite negative moment of $\det\gamma_Y$.
	
	\begin{lemma}\label{lem:gamma-finite-order}
		Fix $r\in\mathbb N_0$ and $p>1$. There exist $P=P(d,r,p)>p$ and $C=C(d,r,p)<\infty$ such that, for every $Y=(Y_1,\ldots,Y_d)\in(\mathbb D^{r+1,P})^d$,
		\begin{equation}\label{eq:gamma-finite-order}
			\norm{\gamma_Y}_{r,p}
			\leq
			C\max_{1\leq i\leq d}\norm{Y_i}_{r+1,P}^2.
		\end{equation}
	\end{lemma}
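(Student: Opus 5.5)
We bound each entry $\gamma_Y^{ij}=\langle DY_i,DY_j\rangle_{\mathfrak H}$ separately. Since $\norm{\gamma_Y}_{r,p}$ is the Hilbert--Schmidt version of the Malliavin--Sobolev norm, it is dominated by a constant $C(d)$ times $\max_{i,j}\norm{\gamma^{ij}_Y}_{r,p}$. So it suffices to show, for each pair $(i,j)$,
\[
\norm{\langle DY_i,DY_j\rangle_{\mathfrak H}}_{r,p}\leq C\norm{Y_i}_{r+1,P}\norm{Y_j}_{r+1,P}
\]
for a suitable $P$; the elementary inequality $ab\leq\max(a,b)^2$ then gives the claim. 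We take $P:=2p$, which is $>p$ as required.

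First, for smooth $Y_i,Y_j\in\mathcal S$, we differentiate the inner product with the Leibniz rule. For $0\leq k\leq r$,
\[
D^k\langle DY_i,DY_j\rangle_{\mathfrak H}=\sum_{l=0}^k\binom kl\,\bigl\langle D^{l+1}Y_i,\,D^{k-l+1}Y_j\bigr\rangle_{\mathfrak H}.
\]
Here each term is the contraction in the one common $\mathfrak H$-slot, suitably arranged in $\mathfrak H^{\otimes k}$. By Cauchy--Schwarz in that slot, the $\mathfrak H^{\otimes k}$-norm of each term is at most $\norm{D^{l+1}Y_i}_{\mathfrak H^{\otimes(l+1)}}\norm{D^{k-l+1}Y_j}_{\mathfrak H^{\otimes(k-l+1)}}$. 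Applying H\"older's inequality with exponents $2p,2p$ then gives
\[
\E\norm{D^k\gamma^{ij}_Y}^p_{\mathfrak H^{\otimes k}}\leq C\,\norm{Y_i}_{r+1,2p}^p\,\norm{Y_j}_{r+1,2p}^p,
\]
because $l+1\leq r+1$ and $k-l+1\leq r+1$. Summing over $k\leq r$ yields the desired inequality for smooth functionals.

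The only step requiring care is extending this from $\mathcal S$ to general $Y\in(\mathbb D^{r+1,P})^d$. I would handle it by approximation. Take $Y^{(n)}_i\in\mathcal S$ with $Y^{(n)}_i\to Y_i$ in $\mathbb D^{r+1,2p}$. The bilinear bound above, applied to differences, shows that $\gamma^{ij}_{Y^{(n)}}$ is Cauchy in $\mathbb D^{r,p}$. Its limit in $L^p$ is $\gamma^{ij}_Y$, since $DY^{(n)}\to DY$ in $L^{2p}(\Omega;\mathfrak H)$. Because $D^k$ is closable, $\gamma^{ij}_Y$ then belongs to $\mathbb D^{r,p}$, and the inequality passes to the limit.
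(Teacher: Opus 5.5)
Your proof is correct and follows essentially the same route as the paper: the Leibniz rule expands $D^k(\gamma_Y)_{ij}$ into terms $\langle D^{\ell+1}Y_i,D^{k-\ell+1}Y_j\rangle_{\mathfrak H}$, followed by H\"older's inequality and summation over $k\le r$ and $i,j\in[d]$. You are more explicit than the paper in two places, both of which the paper leaves implicit: you fix $P=2p$ via Cauchy--Schwarz in the contracted slot, and you justify the passage from $\mathcal S$ to $(\mathbb D^{r+1,P})^d$ by approximation and closability.
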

	
	\begin{proof}
		For $i,j\in[d]$, recall that $ (\gamma_Y)_{ij}=\langle DY_i,DY_j\rangle_{\mathfrak H}$. For every $0\leq k\leq r$, repeated use of the Leibniz rule shows that $D^k(\gamma_Y)_{ij}$ is a finite sum of terms of the form
		\[
		\left\langle D^{\ell+1}Y_i,D^{k-\ell+1}Y_j\right\rangle_{\mathfrak H},
		\qquad 0\leq\ell\leq k.
		\]
		Using H\"older's inequality, with a sufficiently large but finite exponent $P$, we obtain that
		\[
		\norm{D^k(\gamma_Y)_{ij}}_{p}
		\leq
		C\norm{Y_i}_{k+1,P}\norm{Y_j}_{k+1,P}.
		\]
		Summing over $0\leq k\leq r$ and $i,j\in[d]$ proves \eqref{eq:gamma-finite-order}.
	\end{proof}
	
	\begin{lemma}
		\label{lem:inverse-gamma-finite}
		Fix $r\in\mathbb N_0$ and $p>1$. There exist finite exponents $P=P(d,r,p)>1$ and $Q=Q(d,r,p)>1$ such that, if a family $\mathscr Y$ satisfies $\mathcal A(r+1,P,Q)$, then
		\begin{equation}\label{eq:inverse-gamma-conclusion}
			\sup_{Y\in\mathscr Y}\norm{\gamma_Y^{-1}}_{r,p}<\infty.
		\end{equation}
	\end{lemma}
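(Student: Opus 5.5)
We write $\gamma_Y^{-1}=(\det\gamma_Y)^{-1}\operatorname{adj}(\gamma_Y)$, where the adjugate $\operatorname{adj}(\gamma_Y)$ has polynomial entries of degree $d-1$ in the entries of $\gamma_Y$. The positive part comes from Lemma~\ref{lem:gamma-finite-order}, and the only genuinely negative part is $(\det\gamma_Y)^{-1}$, which we control through $K_-(Q)$.

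\emph{Step 1: the adjugate and the determinant.} By the Leibniz rule and H\"older's inequality, each $D^k$ of a product of $d-1$ (or $d$) entries of $\gamma_Y$, $k\le r$, is bounded in $L^{p'}$ by a product of $\|\gamma_Y\|_{r,p''}$ norms, for a suitable finite $p''$. Lemma~\ref{lem:gamma-finite-order} then bounds these uniformly over $\mathscr Y$ in terms of $K_+(r+1,P)$, once $P=P(d,r,p'')$ is chosen. Thus $\|\operatorname{adj}\gamma_Y\|_{r,s}$ and $\|\det\gamma_Y\|_{r,s}$ are uniformly bounded for any prescribed finite $s$, provided $P$ is taken large enough.

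\emph{Step 2: derivatives of $(\det\gamma_Y)^{-1}$.} This is the main obstacle. First we justify that $(\det\gamma_Y)^{-1}\in\mathbb D^{r,s}$ when only a finite negative moment is available. We approximate by $\Phi_\varepsilon(\det\gamma_Y)$ with $\Phi_\varepsilon(x)=(x+\varepsilon)^{-1}$ for $x\ge0$, which is smooth with bounded derivatives on $[0,\infty)$. The chain rule (Fa\`a di Bruno) gives that $D^k\Phi_\varepsilon(\det\gamma_Y)$ is a finite sum of terms
\[
c\,(\det\gamma_Y+\varepsilon)^{-1-j}\,D^{k_1}\det\gamma_Y\otimes\cdots\otimes D^{k_j}\det\gamma_Y,\qquad k_1+\cdots+k_j=k,\ 1\le j\le k .
\]
Since $(\det\gamma_Y+\varepsilon)^{-1-j}\le(\det\gamma_Y)^{-1-j}$, H\"older's inequality bounds the $L^s$ norm of each term by
\[
\E[(\det\gamma_Y)^{-(1+j)s a}]^{1/(sa)}\prod_{i}\|\det\gamma_Y\|_{r,s b_i},
\]
with conjugate exponents $a,b_i$. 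Taking $Q\ge(1+r)s a$ makes these bounds uniform in $\varepsilon$ and over $\mathscr Y$. Letting $\varepsilon\downarrow0$, the approximants converge in $L^s$ by monotone and dominated convergence, and their norms are bounded in $\mathbb D^{r,s}$. Closability of $D^k$ together with weak compactness in $L^s$ ($s>1$) then yields $(\det\gamma_Y)^{-1}\in\mathbb D^{r,s}$, with the same uniform bound. This is the standard argument of \cite[Lemma~2.1.6]{nualart2006malliavin}, but it uses only one finite moment.

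\emph{Step 3: conclusion.} We combine Steps 1 and 2 through the Leibniz rule for $(\det\gamma_Y)^{-1}\operatorname{adj}(\gamma_Y)$ and H\"older's inequality with exponents $2p,2p$. This gives $\|\gamma_Y^{-1}\|_{r,p}\le C\|(\det\gamma_Y)^{-1}\|_{r,2p}\|\operatorname{adj}\gamma_Y\|_{r,2p}$, which is uniformly bounded over $\mathscr Y$. Tracking the exponents, we set $s=2p$, take $Q=Q(d,r,p)$ from Step 2, and choose $P=P(d,r,p)$ so that Step 1 applies at all the exponents used. All these choices are finite, which proves \eqref{eq:inverse-gamma-conclusion}.
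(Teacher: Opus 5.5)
Your proof is correct and follows essentially the same route as the paper: the adjugate representation $\gamma_Y^{-1}=(\det\gamma_Y)^{-1}\gamma_Y^*$, the $\varepsilon$-regularization of \cite[Lemma~2.1.6]{nualart2006malliavin} with only a single finite negative moment, H\"older's inequality, and Lemma~\ref{lem:gamma-finite-order}. The difference is only in how the derivatives are organized. You differentiate the scalar $(\det\gamma_Y)^{-1}$ by Fa\`a di Bruno, apply the Leibniz rule with the adjugate, and make the $\varepsilon\downarrow0$ passage explicit via closability and weak compactness. The paper instead differentiates $\gamma_Y\gamma_Y^{-1}=I_d$ recursively to obtain products $\gamma_Y^{-1}(D^{r_1}\gamma_Y)\gamma_Y^{-1}\cdots(D^{r_\ell}\gamma_Y)\gamma_Y^{-1}$.
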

	
	\begin{proof}
		Let $\gamma_Y^*$ denote the adjugate matrix of $\gamma_Y$. Each entry of $\gamma_Y^*$ is a finite sum of products of $d-1$ entries of $\gamma_Y$. Hence, for every fixed $s>1$, applying Lemma~\ref{lem:gamma-finite-order} and H\"older's inequality, we can choose $P,s_1,s_2$ with $s< s_2< s_1< P< \infty$ such that, uniformly in $Y\in\mathscr Y$,
		\begin{align}\label{eq:adjugate-bound}
			\|\gamma_Y^{*}\|_{s} \leq C \|\gamma_Y\|_{s_2}^{d-1} \leq C  \|Y\|_{1,s_1}^{2(d-1)} \leq C \|Y\|_{1,P}^{2(d-1)} <\infty,
		\end{align}
		For $r=0$, since $	\gamma_Y^{-1}=(\det\gamma_Y)^{-1}\gamma_Y^*$, by H\"older's inequality,
		\[
		\norm{\gamma_Y^{-1}}_{p}
		\le \norm{(\det\gamma_Y)^{-1}}_{p_1}
		\norm{\gamma_Y^*}_{p_2},
		\]
		for suitable $1<p_1,p_2<\infty$ satisfying $1/p=1/p_1+1/p_2$. Thus the conclusion follows from \eqref{eq:adjugate-bound} and the assumption $\mathcal A(1,P,Q)$ provided $Q\geq p_1$ and $P$ is chosen sufficiently large. 

		Now let $r\ge1$. Following the regularization argument in
		\cite[Lemma~2.1.6]{nualart2006malliavin}, for $\varepsilon>0$ set
		\[
		\gamma_{Y,\varepsilon}^{-1}
		:=
		\frac{\gamma_Y^*}{\det\gamma_Y+\varepsilon}
		=
		\frac{\det\gamma_Y}{\det\gamma_Y+\varepsilon}\,
		\gamma_Y^{-1}.
		\]
		Applying the Malliavin chain rule to
		$\gamma_{Y,\varepsilon}^{-1}$ and then letting
		$\varepsilon\downarrow0$, using the assumed negative moments of
		$\det\gamma_Y$, justifies the recursive differentiation of $\gamma_Y\gamma_Y^{-1}=I_d$.
		Consequently, every term in
		$D^k\gamma_Y^{-1}$, $1\le k\le r$, is a finite product of the form
		\[
		\gamma_Y^{-1}
		(D^{r_1}\gamma_Y)
		\gamma_Y^{-1}
		\cdots
		\gamma_Y^{-1}(D^{r_\ell}\gamma_Y)
		\gamma_Y^{-1}= (\det\gamma_Y)^{-(\ell+1)}  \gamma_Y^{*} (D^{r_1} \gamma_Y) \gamma_Y^{*} \cdots \gamma_Y^{*} (D^{r_\ell} \gamma_Y) \gamma_Y^{*},
		\]
		where $1\leq\ell\leq k$, $\sum_{j=1}^\ell r_j=k$ and each $r_j\geq1$. Using H\"older's inequality, \eqref{eq:adjugate-bound}, and Lemma~\ref{lem:gamma-finite-order}, we see that all such terms are uniformly bounded in $L^p$ under the condition $\mathcal A(r+1,P,Q)$ provided $P$ and $Q$ are chosen sufficiently large. Since there are only finitely many products for $k\leq r$, both exponents can be chosen finite. This proves \eqref{eq:inverse-gamma-conclusion}.
	\end{proof}

	For $Y=(Y_1,\ldots,Y_d)\in(\mathbb D^{1,2})^d$ with $\det\gamma_Y>0$ a.s., define
	\[
	u_i(Y):=
	\sum_{j=1}^d(\gamma_Y^{-1})_{ij}DY_j,
	\qquad i\in[d].
	\]
	
	\begin{corollary}
		\label{cor:u-finite-order}
		Fix $r\in\mathbb N_0$ and $p>1$. There exist finite exponents $P=P(d,r,p)>1$ and $Q=Q(d,r,p)>1$ such that, if $\mathscr Y$ satisfies $\mathcal A(r+1,P,Q)$, then
		\[
		\sup_{Y\in\mathscr Y}\max_{1\leq i\leq d}
		\norm{u_i(Y)}_{r,p;\mathfrak H}<\infty.
		\]
	\end{corollary}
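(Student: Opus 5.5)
The approach is a Leibniz–Hölder argument applied to the product $u_i(Y)=\sum_j(\gamma_Y^{-1})_{ij}DY_j$. Both factors are already controlled: $\gamma_Y^{-1}$ by Lemma~\ref{lem:inverse-gamma-finite}, and $DY_j$ by the positive part of $\mathcal A(r+1,P,Q)$.

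\emph{Step 1: differentiate the product.} For $0\le k\le r$, the Leibniz rule gives
\[
D^k u_i(Y)=\sum_{j=1}^d\sum_{\ell=0}^k\binom{k}{\ell}\, D^\ell(\gamma_Y^{-1})_{ij}\otimes D^{k-\ell+1}Y_j,
\]
up to a reordering of tensor factors, which does not change the $\mathfrak H^{\otimes(k+1)}$ norm. To justify this, I will use the $\varepsilon$-regularized inverse $\gamma_{Y,\varepsilon}^{-1}$ from the proof of Lemma~\ref{lem:inverse-gamma-finite}. For $\gamma_{Y,\varepsilon}^{-1}DY_j$ the rule is legitimate because it is a smooth function of Malliavin-differentiable quantities. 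Letting $\varepsilon\downarrow0$ then uses the closability of $D$ together with the $L^p$ bounds below. Alternatively, this follows from the standard fact that products of elements of $\mathbb D^{r,p_1}$ and $\mathbb D^{r,p_2}$ lie in $\mathbb D^{r,p}$ when $1/p=1/p_1+1/p_2$.

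\emph{Step 2: Hölder on each term.} Choose $p_1=p_2=2p$. Then
\[
\norm{D^\ell(\gamma_Y^{-1})_{ij}\otimes D^{k-\ell+1}Y_j}_{p}\le \norm{\gamma_Y^{-1}}_{r,2p}\,\norm{Y_j}_{r+1,2p}.
\]
Apply Lemma~\ref{lem:inverse-gamma-finite} with $(r,2p)$ in place of $(r,p)$; this produces exponents $P_0=P(d,r,2p)$ and $Q_0=Q(d,r,2p)$ for which $\mathcal A(r+1,P_0,Q_0)$ gives $\sup_{Y}\norm{\gamma_Y^{-1}}_{r,2p}<\infty$. Set $P:=\max\{P_0,2p\}$ and $Q:=Q_0$. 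Since $\norm{\cdot}_{r+1,s}$ is increasing in $s$, the condition $\mathcal A(r+1,P,Q)$ implies both $\mathcal A(r+1,P_0,Q_0)$ and $\sup_Y\max_j\norm{Y_j}_{r+1,2p}\le K_+(r+1,P)$. Summing the finitely many terms over $k\le r$, $\ell\le k$ and $j\in[d]$ then bounds $\norm{u_i(Y)}_{r,p;\mathfrak H}$ uniformly over $\mathscr Y$.

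\emph{Main obstacle.} The only delicate point is the rigorous membership $u_i(Y)\in\mathbb D^{r,p}(\mathfrak H)$, that is, the validity of the Leibniz rule when $\gamma_Y^{-1}$ has only finitely many negative moments. The regularization already carried out in Lemma~\ref{lem:inverse-gamma-finite} handles this. The uniform bounds on $\gamma_{Y,\varepsilon}^{-1}DY_j$ in $\mathbb D^{r,p}$ pass to the limit $\varepsilon\downarrow0$ by closability, or by weak compactness in the reflexive space $\mathbb D^{r,p}(\mathfrak H)$ for $p>1$.
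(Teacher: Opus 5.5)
Your proposal is correct and follows the same route as the paper. You expand $D^k u_i(Y)$ by the Leibniz rule into terms $D^\ell(\gamma_Y^{-1})_{ij}\otimes D^{k-\ell+1}Y_j$, apply H\"older's inequality, invoke Lemma~\ref{lem:inverse-gamma-finite} at a larger integrability exponent, and then enlarge $P$ and $Q$. Your explicit choice $p_1=p_2=2p$ and your justification of the product rule via the regularized inverse make precise what the paper's proof leaves implicit.
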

	
	\begin{proof}
		For $0\leq k\leq r$, the Leibniz rule shows that $D^ku_i(Y)$ is a finite sum of terms of the form $D^\ell(\gamma_Y^{-1})_{ij}\otimes D^{k-\ell+1}Y_j$, $0\leq\ell\leq k$.
		The conclusion follows from H\"older's inequality and Lemma~\ref{lem:inverse-gamma-finite}, after increasing the finite exponents $P$ and $Q$ if necessary.
	\end{proof}
	
	\subsection{Weighted density formulas and Sobolev estimates}
	
	We now iterate the preceding vector fields $u_i(Y)$ with $i\in [d]$. For a sufficiently Malliavin differentiable random variable $G$ and an ordered index $\alpha=(\alpha_1,\ldots,\alpha_\ell)\in[d]^\ell$, define the weights recursively by
	\begin{align}\label{eq:H-recursion}
		H_\varnothing(Y,G):=G,\qquad
		H_\alpha(Y,G)
		:=
		\delta\!\left(
		H_{(\alpha_1,\ldots,\alpha_{\ell-1})}(Y,G)
		u_{\alpha_\ell}(Y)
		\right),
	\end{align}
	whenever the successive divergences are well-defined. The next proposition shows, in particular, that these weights are well-defined under the finite-order assumptions stated there.
	
	\begin{proposition}
		\label{prop:H-finite-order}
		Fix $\ell,r\in\mathbb N_0$ and $p>1$. There exist finite exponents $P=P(d,\ell,r,p)>1$ and $Q=Q(d,\ell,r,p)>1$ such that, if $\mathscr Y$ satisfies $\mathcal A(r+\ell+1,P,Q)$, then there exists $C<\infty$ such that, for every $Y\in\mathscr Y$, every $\alpha\in[d]^\ell$, and every $G\in\mathbb D^{r+\ell,P}$,
		\begin{equation}\label{eq:H-finite-order-estimate}
			\norm{H_\alpha(Y,G)}_{r,p}
			\leq C\norm{G}_{r+\ell,P}.
		\end{equation}
		Moreover, for every $\varphi\in C_b^\ell(\mathbb R^d)$,
		\begin{equation}\label{eq:weighted-ibp}
			\E\bigl[G\partial_\alpha\varphi(Y)\bigr]
			=
			\E\bigl[\varphi(Y)H_\alpha(Y,G)\bigr].
		\end{equation}
	\end{proposition}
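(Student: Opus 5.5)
We argue by induction on $\ell$, proving \eqref{eq:H-finite-order-estimate} and \eqref{eq:weighted-ibp} together. For $\ell=0$ both are trivial: $H_\varnothing(Y,G)=G$ and $\norm{G}_{r,p}\leq\norm{G}_{r,P}$ once $P\geq p$.

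For the inductive step, write $\alpha=(\alpha',\alpha_\ell)$ with $\alpha'\in[d]^{\ell-1}$. To control the norm, we apply the continuity \eqref{eq:delta-continuity-H-proof} of $\delta$:
\[
\norm{H_\alpha(Y,G)}_{r,p}\leq C_{r,p}\norm{H_{\alpha'}(Y,G)\,u_{\alpha_\ell}(Y)}_{r+1,p;\mathfrak H}.
\]
By the Leibniz rule, $D^k\bigl(H_{\alpha'}u_{\alpha_\ell}\bigr)$ for $k\leq r+1$ is a finite sum of terms $D^jH_{\alpha'}\otimes D^{k-j}u_{\alpha_\ell}$. Hölder's inequality with $1/p=1/p_1+1/p_2$ then gives
\[
\norm{H_{\alpha'}u_{\alpha_\ell}}_{r+1,p;\mathfrak H}\leq C\norm{H_{\alpha'}(Y,G)}_{r+1,p_1}\,\norm{u_{\alpha_\ell}(Y)}_{r+1,p_2;\mathfrak H}.
\]
We bound the second factor uniformly over $\mathscr Y$ by Corollary~\ref{cor:u-finite-order}, which requires $\mathcal A(r+2,P,Q)$; this is implied by $\mathcal A(r+\ell+1,P,Q)$ when $\ell\geq1$. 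We bound the first factor by the induction hypothesis at level $(\ell-1,r+1,p_1)$. That hypothesis requires $\mathcal A((r+1)+(\ell-1)+1,P',Q')=\mathcal A(r+\ell+1,P',Q')$ and yields $C\norm{G}_{r+\ell,P'}$.

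So the regularity indices match exactly, and we take $P,Q$ to be the maximum of the finitely many exponents produced along the induction. We also note that $\norm{\cdot}_{k,P}$ is nondecreasing in $P$ and that $\mathcal A(r,P,Q)$ is monotone in each argument. These facts let us enlarge the exponents freely, and the resulting $C$ depends only on $K_+$ and $K_-$. The bound also shows $H_{\alpha'}u_{\alpha_\ell}\in\mathbb D^{1,2}(\mathfrak H)\subset\operatorname{Dom}(\delta)$, so the recursion \eqref{eq:H-recursion} is well defined.

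For \eqref{eq:weighted-ibp}, let $\varphi\in C_b^\ell$ and set $\psi=\partial_{\alpha'}\varphi\in C_b^1$. By the chain rule \eqref{eq:chain-rule}, $D\psi(Y)=\sum_j\partial_j\psi(Y)DY_j$. Hence
\[
\ip{D\psi(Y)}{u_i(Y)}_{\mathfrak H}=\sum_{j,k}\partial_j\psi(Y)(\gamma_Y^{-1})_{ik}(\gamma_Y)_{kj}=\partial_i\psi(Y),
\]
using $\det\gamma_Y>0$ a.s. Applying the duality \eqref{eq:ibp} with $F=\psi(Y)$ and $u=H_{\alpha'}u_{\alpha_\ell}$ gives
\[
\E\bigl[\psi(Y)H_\alpha\bigr]=\E\bigl[H_{\alpha'}\,\partial_{\alpha_\ell}\psi(Y)\bigr].
\]
Since $\partial_{\alpha_\ell}\psi=\partial_{\alpha'}(\partial_{\alpha_\ell}\varphi)$ (mixed partials commute), applying the induction hypothesis to $\partial_{\alpha_\ell}\varphi\in C_b^{\ell-1}$ turns the right-hand side into $\E[G\partial_{\alpha_\ell}\partial_{\alpha'}\varphi(Y)]=\E[G\partial_\alpha\varphi(Y)]$. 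The left-hand side is $\E[\partial_{\alpha'}\varphi(Y)H_\alpha]$, so this closes the loop, but with the roles of the two sides mismatched.

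The main obstacle is exactly this ordering subtlety. Done naively, the induction peels off the wrong derivative. To fix it, we prove the more general identity
\[
\E\bigl[G\partial_\beta\partial_\alpha\varphi(Y)\bigr]=\E\bigl[H_\alpha(Y,G)\,\partial_\beta\varphi(Y)\bigr]
\]
by induction, peeling $\alpha_\ell$ last, and then take $\beta=\varnothing$. The one-step identity above, $\E[H_{\alpha'}\partial_{\alpha_\ell}\chi(Y)]=\E[H_\alpha\chi(Y)]$ for $\chi\in C_b^1$, gives exactly this generalization. The remaining technical point is that \eqref{eq:ibp} is applied with $F=\psi(Y)\in\mathbb D^{1,2}$, which holds since $\psi\in C_b^1$ and $Y\in(\mathbb D^{1,2})^d$.
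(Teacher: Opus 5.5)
Your proposal is correct and follows the paper's proof. The norm estimate uses the same induction on $\ell$, via continuity of $\delta$, H\"older, Corollary~\ref{cor:u-finite-order} and the hypothesis at level $(\ell-1,r+1,p_1)$, and the identity rests on the same one-step duality $\E[H_{\alpha'}\partial_{\alpha_\ell}\chi(Y)]=\E[H_\alpha\chi(Y)]$ that the paper iterates. The ordering ``obstacle'' is only an artifact of your first choice $\chi=\partial_{\alpha'}\varphi$, and the step you display there does not follow from the induction hypothesis. Taking $\chi=\varphi$ and then applying the hypothesis to $\partial_{\alpha_\ell}\varphi\in C_b^{\ell-1}$ closes the induction directly, so the generalized identity with $\beta$ is unnecessary.
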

	
	\begin{proof}
		We argue by induction on $\ell$. The case $\ell=0$ is immediate since $H_\varnothing(Y,G)=G$. Assume the estimate \eqref{eq:H-finite-order-estimate} has been proved for ordered indices of length $\ell-1$, where $\ell\geq1$. Let $\alpha=(\alpha_1,\ldots,\alpha_\ell)$ and $\widehat\alpha=(\alpha_1,\ldots,\alpha_{\ell-1})$. By the definition of $H_\alpha$ (see \eqref{eq:H-recursion}), the continuity of the divergence operator (see \eqref{eq:delta-continuity-H-proof}) and H\"older's inequality,
		\[
		\norm{H_\alpha(Y,G)}_{r,p} \leq C \norm{
			H_{\hat{\alpha}}(Y,G)
			u_{\alpha_\ell}(Y)
		}_{r+1,p;\mathfrak H} 
		\leq
		C
		\norm{H_{\widehat\alpha}(Y,G)}_{r+1,p_1}
		\norm{u_{\alpha_\ell}(Y)}_{r+1,p_2;\mathfrak H},
		\]
		where $p_1,p_2>p$ are chosen so that $1/p=1/p_1+1/p_2$. Corollary~\ref{cor:u-finite-order} controls the second factor. For the first factor, using the induction hypothesis for length $\ell-1$ with target Sobolev order $r+1$ and integrability exponent $p_1$, we know that there exist finite  $P_1$ and $Q_1$ such that, under $\mathcal{A}(r+\ell+1, P_1, Q_1)$,
		\[
		\norm{H_{\widehat\alpha}(Y,G)}_{r+1,p_1}
		\leq
		C\norm{G}_{(r+1)+(\ell-1),P_1}
		=
		C\norm{G}_{r+\ell,P_1}.
		\]
		Only finitely many positive integrability exponents $P$ and negative-moment orders $Q$ arise in the $\ell$ induction steps. Taking their maxima proves \eqref{eq:H-finite-order-estimate} with finite $P$ and $Q$.
		
		It remains to prove \eqref{eq:weighted-ibp}. From the definition of $u_i(Y)$,
		\[
		\ip{DY_k}{u_i(Y)}_{\cH}  = \sum_{j=1}^d(\gamma_Y^{-1})_{ij} \ip{DY_k}{D Y_j}_{\cH} =   \sum_{j=1}^d(\gamma_Y^{-1})_{ij} (\gamma_Y)_{jk}= \delta_{ik},\qquad i,k\in[d].
		\]
		Consequently, by the chain rule \eqref{eq:chain-rule}, 
		\[
		\left\langle
		D\varphi(Y),u_i(Y)
		\right\rangle_{\mathfrak H}
		=
		\sum_{k=1}^d
		(\partial_k\varphi)(Y)
		\langle DY_k,u_i(Y)\rangle_{\mathfrak H}
		=
		(\partial_i\varphi)(Y).
		\]
		The duality between $D$ and $\delta$ therefore gives
		\begin{align*}
			\E[G (\partial_i \varphi)(Y)]
			=\E\big[\ip{D\varphi(Y)}{Gu_i(Y)}_{\cH}\big]
			=\E\big[\varphi(Y)\delta(Gu_i(Y))\big] = \E\big[\varphi(Y)H_{(i)}(Y,G)\big] .
		\end{align*} 
		Iterating the same argument yields \eqref{eq:weighted-ibp} for every $\alpha\in[d]^\ell$. The estimates \eqref{eq:H-finite-order-estimate} already proved ensure that each divergence operation is well-defined.
	\end{proof}
	
	Polynomial weights will be needed to convert the density formulas in  Proposition~\ref{prop:weighted-density-formula} below into spatial decay estimates in Proposition~\ref{prop:polynomial-density-estimate}.
	
	\begin{corollary}
		\label{cor:H-polynomial-finite}
		Fix $\ell,a\in\mathbb N_0$ and $p>1$. There exist finite exponents $P=P(d,\ell,a,p)>1$ and $Q=Q(d,\ell,a,p)>1$ such that, if $\mathscr Y$ satisfies $\mathcal A(\ell+1,P,Q)$, then there exists $C<\infty$ such that, for every $Y\in\mathscr Y$, $\alpha\in[d]^\ell$, and $G\in\mathbb D^{\ell,P}$,
		\[
		\norm{(1+|Y|)^aH_\alpha(Y,G)}_{p}
		\leq
		C\norm{G}_{\ell,P}.
		\]
	\end{corollary}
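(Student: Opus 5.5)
The plan is to split the weight by Hölder's inequality and bound each factor with results already in hand. For the polynomial factor, note that
\[
(1+|Y|)^a\leq 2^{a}\bigl(1+|Y|^a\bigr)\leq C\Bigl(1+\sum_{i=1}^d|Y_i|^a\Bigr).
\]
Choose $p_1,p_2>p$ with $1/p=1/p_1+1/p_2$. Hölder's inequality then gives
\[
\norm{(1+|Y|)^aH_\alpha(Y,G)}_{p}
\leq C\Bigl(1+\max_{1\leq i\leq d}\norm{Y_i}_{ap_1}^a\Bigr)\norm{H_\alpha(Y,G)}_{p_2}.
\]
The first factor is controlled uniformly over $\mathscr Y$ by $K_+(0,P)$ as soon as $P\geq ap_1$. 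This is immediate from the condition $\mathcal A(\ell+1,P,Q)$, because $\norm{Y_i}_{0,P}\leq\norm{Y_i}_{\ell+1,P}$.

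For the second factor I would apply Proposition~\ref{prop:H-finite-order} with Sobolev order $r=0$ and integrability exponent $p_2$. That proposition supplies finite exponents $P'=P(d,\ell,0,p_2)$ and $Q'=Q(d,\ell,0,p_2)$. Under $\mathcal A(\ell+1,P',Q')$ it gives
\[
\norm{H_\alpha(Y,G)}_{p_2}=\norm{H_\alpha(Y,G)}_{0,p_2}\leq C\norm{G}_{\ell,P'},
\]
uniformly in $Y\in\mathscr Y$ and $\alpha\in[d]^\ell$, for every $G\in\mathbb D^{\ell,P'}$. Here the required regularity order is exactly $r+\ell+1=\ell+1$, which matches the hypothesis of the corollary.

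The exponents are then fixed as $P:=\max\{ap_1,P'\}$ and $Q:=Q'$. Both are finite and depend only on $(d,\ell,a,p)$ once $p_1,p_2$ are fixed, for instance $p_1=p_2=2p$. Monotonicity of the Malliavin--Sobolev norms in the integrability exponent shows three things. First, $\mathcal A(\ell+1,P,Q)$ implies $\mathcal A(\ell+1,P',Q')$, since $K_+$ increases with $P$ by Jensen's inequality. Second, $\norm{G}_{\ell,P'}\leq\norm{G}_{\ell,P}$. Third, $G\in\mathbb D^{\ell,P}\subset\mathbb D^{\ell,P'}$, so the weights $H_\alpha(Y,G)$ are well defined. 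Combining the two bounds gives $\norm{(1+|Y|)^aH_\alpha(Y,G)}_{p}\leq C\norm{G}_{\ell,P}$.

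There is no genuine obstacle here; the only care needed is the bookkeeping of exponents. Everything comes down to choosing $P$ large enough that one finite exponent simultaneously covers the moment of $|Y|^a$ and the input required by Proposition~\ref{prop:H-finite-order} for the Hölder-conjugate exponent $p_2$. The constant $C$ then depends only on $d,\ell,a,p$ and on $K_+(\ell+1,P)$ and $K_-(Q)$.
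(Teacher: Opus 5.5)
Your proof is correct and is essentially the paper's argument: a H\"older split into $\norm{(1+|Y|)^a}_{p_1}$, controlled by the positive part of $\mathcal A(\ell+1,P,Q)$, and $\norm{H_\alpha(Y,G)}_{p_2}$, controlled by Proposition~\ref{prop:H-finite-order} with $r=0$, followed by enlarging the finite exponents $P,Q$. One harmless nit: with the paper's convention $\norm{F}_{k,p}^p=\sum_{i\leq k}\E\norm{D^iF}^p$, the norm $\norm{\cdot}_{k,p}$ increases with $p$ only up to a constant factor depending on $k$ and the exponents, which is simply absorbed into $C$.
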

	
	\begin{proof}
		Choose $p_1,p_2>1$ with $1/p=1/p_1+1/p_2$. Then by H\"older's inequality,
		\[
		\norm{(1+|Y|)^aH_\alpha(Y,G)}_{p}
		\leq
		\norm{(1+|Y|)^a}_{p_1}
		\norm{H_\alpha(Y,G)}_{p_2}.
		\]
		The first factor is controlled by the positive part  $K_+(\ell+1,P)$ of $\mathcal A(\ell+1,P,Q)$ for a sufficiently large but finite $P$, while the second is controlled by Proposition~\ref{prop:H-finite-order} with $r=0$. Increasing the finite exponents $P,Q$ if necessary proves the result.
	\end{proof}

	We next derive explicit density formulas. The oriented form of the representation is useful because the choice of the orthant can depend on the evaluation point $x$.
	
	\begin{proposition}[Weighted density formula]
		\label{prop:weighted-density-formula}
		Fix $m\in\mathbb N_0$. There exist finite exponents $P=P(d,m)>1$ and $Q=Q(d,m)>1$
		such that the following holds. Let $Y$ satisfy $\mathcal A(d+m+1,P,Q)$ and let $G\in\mathbb D^{d+m,P}$. Then $\mu_{Y,G}$ admits a density $p_{Y,G}\in C^m(\mathbb R^d)$. Moreover, for every $\alpha\in[d]^k$ with $0\leq k\leq m$, and every $A\subset[d]$, with $|A|$ denoting the cardinality of $A$,
		\begin{equation}\label{eq:oriented-density-formula}
			\partial_\alpha p_{Y,G}(x)
			=
			(-1)^{k+|A|}
			\E\!\bigg[
			\bigg( \prod_{i\notin A}\mathbf 1_{\{Y_i>x_i\}}\bigg) 
			\bigg( \prod_{i\in A}\mathbf 1_{\{Y_i<x_i\}}\bigg)
			H_{(1,\ldots,d,\alpha)}(Y,G)
			\bigg].
		\end{equation}
	\end{proposition}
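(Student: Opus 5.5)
The plan is the standard Malliavin density argument (as in \cite[Proposition~2.1.5]{nualart2006malliavin}), run with the finite-order weights of Proposition~\ref{prop:H-finite-order}. First fix the exponents. Choose $P,Q$ so large that Proposition~\ref{prop:H-finite-order} applies with $\ell=d+m$, $r=0$ and some $p>1$. Then every weight $H_\beta(Y,G)$ with $|\beta|\le d+m$ is well defined, belongs to $L^p\subset L^1$, and satisfies the integration by parts formula \eqref{eq:weighted-ibp}; this uses $\mathcal A(d+m+1,P,Q)$ and $G\in\mathbb D^{d+m,P}$. Since only finitely many indices occur, the exponents are finite.

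Next I would derive \eqref{eq:oriented-density-formula} for $k=0$ and general $A$. Take $f\in C_c^\infty(\mathbb R^d)$ and set
\[
\varphi(y)=\int_{\mathbb R^d} f(x)\prod_{i\notin A}\mathbf 1_{\{y_i>x_i\}}\prod_{i\in A}\mathbf 1_{\{y_i<x_i\}}\dd x .
\]
Then $\partial_1\cdots\partial_d\varphi=(-1)^{|A|}f$. However, $\varphi$ need not be bounded, so it does not lie in $C_b^d$. To fix this, multiply by a cutoff, or use the tail-integral form $\int_{-\infty}^{y_i}$ versus $\int_{y_i}^\infty$ in each coordinate. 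Alternatively, first mollify, then apply \eqref{eq:weighted-ibp} to $\psi_N\varphi$ and let $N\to\infty$ by dominated convergence, which works because $H_\beta\in L^1$. One more simplification is available. For a fixed $A$, the functions $f$ together with their primitives in the direction of each orthant can be chosen bounded whenever $f$ has integral zero in the relevant variables. It is therefore easier to prove the case $A=\varnothing$ with $\varphi(y)=\int_{\{x\le y\}}f$, which is bounded and lies in $C_b^d$ after smoothing. The general $A$ then follows because $\E[\prod_{i\in B}(\ldots)H_{(1,\ldots,d)}]$ changes only by terms in which some coordinate's indicator is replaced by $1$. Each such term equals $\E[\partial_i(\cdot)]$-type expressions that vanish, since $\E[H_{(\ldots,i,\ldots)}\cdot\text{(function not depending on }y_i)]=0$ by \eqref{eq:weighted-ibp} with the $i$-th derivative falling on a constant. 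Fubini then gives
\[
\E[Gf(Y)]=\int f(x)\,(-1)^{|A|}\E[\cdots H_{(1,\ldots,d)}(Y,G)]\dd x ,
\]
so this expectation is a density of $\mu_{Y,G}$.

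For derivatives, replace $G$ by weights. Applying the $k=0$ formula to the measure $\mu_{Y,H_\alpha(Y,G)}$ is awkward, so instead test against $f$. By \eqref{eq:weighted-ibp},
\[
\int \partial_\alpha f\,p_{Y,G}=\E[G\partial_\alpha f(Y)]=\E[f(Y)H_\alpha(Y,G)] .
\]
Now run the same $\varphi$-construction with the weight $H_\alpha(Y,G)$ in place of $G$. By the recursion \eqref{eq:H-recursion}, $H_{(1,\ldots,d)}(Y,H_\alpha(Y,G))$ should be identified with $H_{(\alpha,1,\ldots,d)}(Y,G)$. Since the statement writes $(1,\ldots,d,\alpha)$, I would instead apply \eqref{eq:weighted-ibp} with $\partial_{(1,\ldots,d,\alpha)}$ acting on $\Phi$, where $\Phi$ has $\partial_1\cdots\partial_d\Phi=\varphi$. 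The order of the derivatives of the smooth test function is irrelevant, and this yields distributional derivatives with the sign $(-1)^k$.

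Finally, for continuity of $x\mapsto\E[\mathbf 1_{\{\ldots\}}H]$: the law of $Y$ has no atoms on hyperplanes $\{Y_i=x_i\}$, because $\mu_Y$ is absolutely continuous by the $k=0$ case with $G=1$. Dominated convergence then gives continuity, and continuous distributional derivatives up to order $m$ give $p_{Y,G}\in C^m$. The main obstacle is the approximation step, namely justifying \eqref{eq:weighted-ibp} for the non-smooth, possibly unbounded $\varphi$ using only $L^1$ bounds on the weights.
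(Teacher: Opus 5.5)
Your proof is in substance the paper's proof. You take a primitive of the test function, apply \eqref{eq:weighted-ibp} with index $(1,\ldots,d)$, and then apply it with index $(1,\ldots,d,\alpha)$ to the same primitive to handle $\partial_\alpha f$. Fubini gives the density and its distributional derivatives, and dominated convergence plus the absence of atoms of $Y_i$ gives continuity. However, the step you single out as the main obstacle does not exist, and the detours you take around it are where the write-up goes wrong. For $f\in C_c^\infty(\mathbb R^d)$, your function $\varphi(y)=\int f(x)\prod_{i\notin A}\mathbf 1_{\{y_i>x_i\}}\prod_{i\in A}\mathbf 1_{\{y_i<x_i\}}\dd x$ satisfies $|\varphi|\le\|f\|_{L^1}$. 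Each of its partial derivatives is, up to sign, an integral of a derivative of $f$ over the not-yet-differentiated coordinates, so it is also bounded. Hence $\varphi\in C_b^\infty(\mathbb R^d)$, and \eqref{eq:weighted-ibp} applies to it directly. You seem to conflate boundedness with compact support of the primitive, but \eqref{eq:weighted-ibp} only needs $C_b^\ell$. So the general-$A$ case follows at once from $\partial_1\cdots\partial_d\varphi=(-1)^{|A|}f$. This $\varphi$ is exactly the paper's $\widetilde\Phi_h$, and no cutoff, mollification or limiting argument is needed.

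Your reduction of general $A$ to $A=\varnothing$ is actually the only place where a non-smooth function enters. There you claim $\E[\psi(Y)H_{(1,\ldots,d)}(Y,G)]=0$ for a product $\psi$ of indicators not depending on $y_i$. This is not an instance of \eqref{eq:weighted-ibp}, since $\psi\notin C_b^d$. It can be repaired: integrate against $f(x)\dd x$ first, so the resulting function of $y$ lies in $C_b^\infty$ and is independent of $y_i$, then use continuity in $x$. You also need $\mathbb P(Y_i=x_i)=0$ to write $\mathbf 1_{\{Y_i<x_i\}}=1-\mathbf 1_{\{Y_i>x_i\}}$ almost surely. But this is an unnecessary detour.

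In the derivative step, the auxiliary function must satisfy $\partial_1\cdots\partial_d\Phi=f$, i.e.\ $\Phi$ is your $\varphi$ up to the sign $(-1)^{|A|}$, not $\partial_1\cdots\partial_d\Phi=\varphi$. Only then is $\partial_{(1,\ldots,d,\alpha)}\Phi=\pm\partial_\alpha f$, which yields the weight $H_{(1,\ldots,d,\alpha)}(Y,G)$ with the sign $(-1)^{k+|A|}$ in \eqref{eq:oriented-density-formula}. With these corrections your argument coincides with the paper's.
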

	
	\begin{proof}
		The condition $\mathcal A(d+m+1,P,Q)$ is chosen so that Proposition~\ref{prop:H-finite-order} applies to all weights of length at most $d+m$ appearing below.	Take $h\in C_c^\infty(\mathbb R^d)$ and set
		\[
		\Phi_h(y)
		:=
		\int_{-\infty}^{y_1}\cdots\int_{-\infty}^{y_d}
		h(x)\,\dd x_d\cdots\dd x_1 = \int_{\R^d} h(x)  \prod_{i=1}^d\mathbf 1_{\{y_i>x_i\}} \dd x.
		\]
		Then $\Phi_h\in C_b^\infty(\mathbb R^d)$ and $\partial_1\cdots\partial_d\Phi_h=h$. By \eqref{eq:weighted-ibp} and Fubini's theorem,
		\begin{align*}
			\E[Gh(Y)] &= \E[G(\partial_1\cdots\partial_d\Phi_h)(Y)] =\E[\Phi_h (Y) H_{(1,\dots,d)}(Y,G)]\\
			&=\int_{\R^d}h(x)
			\E\bigg[	\bigg( \prod_{i=1}^d
			\mathbf 1_{\{Y_i>x_i\}} \bigg) H_{(1,\dots,d)}(Y,G)\bigg]\dd x.
		\end{align*}
		This proves absolute continuity of $\mu_{Y,G}$ and \eqref{eq:oriented-density-formula} for $k=0$ and $A=\varnothing$. For $\alpha=(\alpha_1,\dots,\alpha_k)\in[d]^k$ of length $1\leq k\leq m$, apply the same argument to $\partial_\alpha h$. In the distributional sense,
		\[
		\int_{\mathbb R^d}h(x)\partial_\alpha p_{Y,G}(x)\,\dd x
		=
		(-1)^k\E[G\partial_\alpha h(Y)],
		\]
		and a further application of \eqref{eq:weighted-ibp} gives \eqref{eq:oriented-density-formula} with $A=\varnothing$. For a general $A\subset[d]$, replace $\Phi_h$ by
		\[
		\widetilde\Phi_h(y)
		:=
		\int_{\prod_{i\notin A}(-\infty,y_i)
			\times\prod_{i\in A}(y_i,\infty)}
		h(x)\,\dd x.
		\]
		Then $\partial_1\cdots\partial_d\widetilde\Phi_h=(-1)^{|A|}h$. The same argument yields \eqref{eq:oriented-density-formula}.
		
		Finally, the case $G=1$ and $k=0$ shows that the law of $Y$ is absolutely continuous. In particular, $\mathbb P(Y_i=x_i)=0$ for every fixed $i$ and $x_i$. Hence the indicators in \eqref{eq:oriented-density-formula} converge almost surely when $x_n\to x$ in $\R^d$. Using Proposition~\ref{prop:H-finite-order}, we have $H_{\beta}(Y,G)\in L^1$ for any $\beta$ with $0\leq |\beta|\leq d+m $. Then by the dominated convergence theorem, all derivatives of order at most $m$ have continuous versions, and $p_{Y,G}\in C^m(\mathbb R^d)$.
	\end{proof}
	
	The oriented formula \eqref{eq:oriented-density-formula} immediately yields polynomial decay. 
	
	\begin{proposition}
		\label{prop:polynomial-density-estimate}
		Fix $m,a\in\mathbb N_0$. There exist finite exponents $P=P(d,m,a)>1$ and $Q=Q(d,m,a)>1$ such that, if $\mathscr Y$ satisfies $\mathcal A(d+m+1,P,Q)$, then there exists $C<\infty$ such that, for every $Y\in\mathscr Y$ and every $G\in\mathbb D^{d+m,P}$,
		\[
		\max_{0\leq k\leq m}\max_{\alpha\in[d]^k}
		\sup_{x\in\mathbb R^d}
		(1+|x|)^a|\partial_\alpha p_{Y,G}(x)|
		\leq
		C\norm{G}_{d+m,P}.
		\]
	\end{proposition}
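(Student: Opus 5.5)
The plan is to combine the oriented density formula \eqref{eq:oriented-density-formula} with a choice of orthant adapted to $x$, and then to convert the resulting tail indicator into polynomial decay via the weighted estimate of Corollary~\ref{cor:H-polynomial-finite}.

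Fix $x\in\mathbb R^d$, $k\le m$ and $\alpha\in[d]^k$. We choose $A=A(x):=\{i\in[d]:x_i<0\}$. Then on the event
\[
\Bigl(\prod_{i\notin A}\mathbf 1_{\{Y_i>x_i\}}\Bigr)\Bigl(\prod_{i\in A}\mathbf 1_{\{Y_i<x_i\}}\Bigr)=1,
\]
every coordinate satisfies $|Y_i|\ge|x_i|$. Indeed, for $i\notin A$ we have $Y_i>x_i\ge0$, and for $i\in A$ we have $Y_i<x_i<0$. Hence on this event $|Y|\ge|x|$, so $(1+|x|)^a\le(1+|Y|)^a$.

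Inserting this into \eqref{eq:oriented-density-formula} gives
\[
(1+|x|)^a|\partial_\alpha p_{Y,G}(x)|\le \E\bigl[(1+|Y|)^a\,|H_{(1,\ldots,d,\alpha)}(Y,G)|\bigr]
\le \norm{(1+|Y|)^aH_{(1,\ldots,d,\alpha)}(Y,G)}_{p}
\]
for any $p>1$, say $p=2$. We then apply Corollary~\ref{cor:H-polynomial-finite} with $\ell=d+k\le d+m$ and $p=2$. This bounds the right-hand side by $C\norm{G}_{d+k,P}\le C\norm{G}_{d+m,P}$, uniformly in $Y\in\mathscr Y$, under $\mathcal A(d+k+1,P,Q)$.

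It remains to choose the exponents. We take $P$ and $Q$ to be the maxima of the finitely many exponents required by Proposition~\ref{prop:weighted-density-formula} (which gives existence, $C^m$ regularity and the formula) and by Corollary~\ref{cor:H-polynomial-finite} for $\ell=d,\ldots,d+m$. Since $\mathcal A(r,P,Q)$ is monotone in $r$, $P$ and $Q$, the assumption $\mathcal A(d+m+1,P,Q)$ then covers all cases. The constant is uniform because both results give constants depending only on $K_+$ and $K_-$. Finally, we take the maximum over $k\le m$ and the finitely many $\alpha$.

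The only delicate point is the orthant choice, in particular the boundary case $x_i=0$. Assigning $i\notin A$ there is harmless, since we only need $|Y_i|\ge|x_i|=0$. The rest is bookkeeping of exponents.
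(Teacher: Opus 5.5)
Your proposal is correct and matches the paper's proof essentially step for step. Both choose the orthant $A_x=\{i:x_i<0\}$ in the oriented formula \eqref{eq:oriented-density-formula}, observe that $|x|\le|Y|$ on the resulting event, and then bound $\E\bigl[(1+|Y|)^a|H_{(1,\ldots,d,\alpha)}(Y,G)|\bigr]$ via Corollary~\ref{cor:H-polynomial-finite} (with $\ell=d+k$), taking maxima of the finitely many exponents.
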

	
	\begin{proof}
		Fix $Y,G,k,\alpha$ as in the statement. For $x\in\mathbb R^d$, set $A_x:=\{i\in[d]:x_i<0\}$ and $B_x:=
		\bigcap_{i\notin A_x}\{Y_i>x_i\}
		\cap
		\bigcap_{i\in A_x}\{Y_i<x_i\}$.
		On $B_x$, $|x_i|\leq|Y_i|$ for every $i$, and hence $|x|\leq|Y|$. Therefore by \eqref{eq:oriented-density-formula},
		\[
		(1+|x|)^a|\partial_\alpha p_{Y,G}(x)| = (1+|x|)^a  \left| 	\E\!\left[\mathbf 1_{B_x}
		H_{(1,\ldots,d,\alpha)}(Y,G)\right]\right| 
		\leq
		\E\!\left[
		(1+|Y|)^a
		|H_{(1,\ldots,d,\alpha)}(Y,G)|
		\right].
		\]
		The ordered index $(1,\ldots,d,\alpha)$ has length at most $d+m$. After choosing the finite
		exponents $P$ and $Q$ sufficiently large, depending only on $d,m,a$, Corollary~\ref{cor:H-polynomial-finite}  and H\"older's inequality yield
		\[
		\E\left[
		(1+|Y|)^a
		\left|
		H_{(1,\ldots,d,\alpha)}(Y,G)
		\right|
		\right]
		\leq
		C\norm{G}_{d+m,P}.
		\]
		Taking the supremum over $x\in \R^d$, $0\leq k\leq m$ and $\alpha\in [d]^k$ proves
		the conclusion. 
	\end{proof}
	
	We record the Sobolev estimates that will be used repeatedly later. For a fixed pair $(m,p)$, Corollary~\ref{cor:weighted-Sobolev} below requires only one finite positive exponent $P=P(d,m,p)$ and one finite negative-moment exponent $Q=Q(d,m,p)$. In particular, no condition of the form $(\det\gamma_Y)^{-1}\in\bigcap_{q>1}L^q(\Omega)$ is needed. 
	
	\begin{corollary}[Sobolev estimates for weighted densities]
		\label{cor:weighted-Sobolev}
		Fix $m\in\mathbb N_0$ and $p\in[1,\infty]$. There exist finite exponents $P=P(d,m,p)>1$ and $Q=Q(d,m,p)>1$ such that, if $\mathscr Y$ satisfies $\mathcal A(d+m+1,P,Q)$, then there exists $C<\infty$ such that, for every $Y\in\mathscr Y$ and every $G\in\mathbb D^{d+m,P}$,
		\[
		p_{Y,G}\in W^{m,p}(\mathbb R^d),
		\qquad
		\norm{p_{Y,G}}_{W^{m,p}(\mathbb R^d)}
		\leq
		C\norm{G}_{d+m,P}.
		\]
	\end{corollary}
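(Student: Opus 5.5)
The plan is to deduce the statement directly from Proposition~\ref{prop:polynomial-density-estimate}, choosing the decay exponent $a$ large enough that a pointwise weighted bound controls every $L^p$ norm. Fix $m$ and $p\in[1,\infty]$. Take $a:=d+1$, so that $(1+|x|)^{-a}\in L^1(\mathbb R^d)$, and hence also $(1+|x|)^{-a}\in L^p(\mathbb R^d)$ for every $p\in[1,\infty]$: it is bounded by $1$ and integrable. Set $P(d,m,p)$ and $Q(d,m,p)$ equal to the exponents $P(d,m,a)$ and $Q(d,m,a)$ from Proposition~\ref{prop:polynomial-density-estimate}, enlarged if necessary so that they also dominate the exponents $P(d,m)$ and $Q(d,m)$ of Proposition~\ref{prop:weighted-density-formula}. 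Then, under $\mathcal A(d+m+1,P,Q)$ and for $G\in\mathbb D^{d+m,P}$, Proposition~\ref{prop:weighted-density-formula} gives that $\mu_{Y,G}$ has a density $p_{Y,G}\in C^m(\mathbb R^d)$.

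Next, identify Sobolev derivatives with classical ones. Since $p_{Y,G}\in C^m$, the classical partial derivatives $\partial_\alpha p_{Y,G}$ for $|\alpha|\le m$ coincide with the distributional derivatives used to define $W^{m,p}$. Proposition~\ref{prop:polynomial-density-estimate} gives, uniformly in $Y\in\mathscr Y$,
\[
|\partial_\alpha p_{Y,G}(x)|\le C\norm{G}_{d+m,P}(1+|x|)^{-a},\qquad x\in\mathbb R^d .
\]
Taking $L^p$ norms gives
\[
\norm{\partial_\alpha p_{Y,G}}_{L^p}\le C\norm{G}_{d+m,P}\,\norm{(1+|\cdot|)^{-a}}_{L^p},
\]
which is finite for every $p\in[1,\infty]$ (for $p=\infty$ the bound holds directly). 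Summing over the finitely many $\alpha\in[d]^k$ with $0\le k\le m$ yields $p_{Y,G}\in W^{m,p}(\mathbb R^d)$ together with the stated norm bound, with a constant that is uniform over the family.

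There is no real obstacle. The only points needing care are that the exponents $P$ and $Q$ stay finite and can be taken independent of $p$ (they depend on $p$ only through the choice $a=d+1$), and that continuity of the derivatives justifies equating classical and weak derivatives. Both points are already supplied by the preceding propositions.
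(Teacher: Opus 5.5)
Your proposal is correct and follows essentially the same route as the paper: it takes the pointwise weighted bound from Proposition~\ref{prop:polynomial-density-estimate} and integrates it in $L^p$. The only difference is cosmetic. You fix one decay exponent $a=d+1$, which works for every $p\in[1,\infty]$ at once and so makes $P,Q$ independent of $p$, whereas the paper takes an integer $a$ with $ap>d$ for finite $p$ and $a=0$ for $p=\infty$.
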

	
	\begin{proof}
		First let $1\leq p<\infty$ and choose an integer $a$ such that $ap>d$. Proposition~\ref{prop:polynomial-density-estimate} gives that, for every $\alpha\in[d]^k$ with $0\leq k\leq m$,
		\[
		|\partial_\alpha p_{Y,G}(x)|
		\leq
		C\norm{G}_{d+m,P}(1+|x|)^{-a}.
		\]
		Since $ap>d$, $(1+|x|)^{-a}\in L^p(\mathbb R^d)$ and then
		\[
		\norm{\partial_\alpha p_{Y,G}}_{L^p(\mathbb R^d)}
		\leq
		C\norm{G}_{d+m,P}
		\left(
		\int_{\mathbb R^d}
		(1+|x|)^{-ap}\,\dd x
		\right)^{1/p}\leq
		C\norm{G}_{d+m,P}.
		\]
		Summing over $0\leq k\leq m$ proves the result. For $p=\infty$, the derivatives of order at most $m$ are understood as the continuous versions ensured by Proposition~\ref{prop:weighted-density-formula}. Applying Proposition~\ref{prop:polynomial-density-estimate} with $a=0$, we get the estimate directly.
	\end{proof}

	\subsection{Stability along the Gaussian interpolation path}
	We now return to the interpolation of Section~\ref{sec:interpolation}
	and record the stability of the finite-order condition
	$\mathcal A(r,P,Q)$ along the path.
	
	\begin{proposition}
		\label{prop:interpolation-covariance}
		Let $\mathscr F$ be a family of random vectors
		that are measurable with respect to $\mathcal G_0$. If $\mathscr F$ satisfies $\mathcal A(r,P,Q)$, then the interpolation family \[\mathscr F^{\mathrm{int}}:=\{\sqrt t\,F+\sqrt{1-t}\,Z:F\in\mathscr F,\ 0\leq t\leq1\}\] also satisfies $\mathcal A(r,P,Q)$, with possibly different values of the finite bounds $K_+(r,P)$ and $K_-(Q)$.
	\end{proposition}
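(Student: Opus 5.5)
We handle the positive Malliavin--Sobolev bound and the negative moment of the Malliavin determinant separately. Write $Y=F_t=\sqrt t\,F+\sqrt{1-t}\,Z$ with $F\in\mathscr F$ and $0\le t\le1$.

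\emph{Positive part.} Here $Z_j=X(h_j)$, so $DZ_j=h_j$ and $D^kZ_j=0$ for $k\ge2$. Hence $D^kY_i=\sqrt t\,D^kF_i$ for $k\ge2$ and $DY_i=\sqrt t\,DF_i+\sqrt{1-t}\,h_i$. Since $Z_i$ is a standard Gaussian variable, $\norm{Z_i}_{r,P}$ is a finite constant $c(r,P)$. The triangle inequality then gives
\[
\norm{Y_i}_{r,P}\le \norm{F_i}_{r,P}+c(r,P).
\]
Taking the supremum over $\mathscr F$ and $t$ yields $K_+^{\mathrm{int}}(r,P)\le K_+(r,P)+c(r,P)$.

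\emph{Negative part.} By \eqref{eq:F-Z-orthogonal}, $\langle DF_i,h_j\rangle_{\mathfrak H}=0$, and the $h_j$ are orthonormal. The Malliavin matrix of $Y$ therefore splits as
\[
\gamma_Y=t\,\gamma_F+(1-t)I_d .
\]
Both summands are symmetric positive semidefinite, and $\gamma_F$ is positive definite a.s. because $K_-(Q)<\infty$. The key step is to bound $\det(t\gamma_F+(1-t)I_d)$ from below by a quantity whose $(-Q)$-th moment is uniformly controlled. We use the concavity of $A\mapsto\log\det A$ on positive definite matrices (Minkowski's determinant inequality gives the same conclusion). It yields
\[
\det\bigl(t\gamma_F+(1-t)I_d\bigr)\ge (\det\gamma_F)^t\cdot 1^{1-t}=(\det\gamma_F)^t .
\]
It follows that $(\det\gamma_Y)^{-Q}\le(\det\gamma_F)^{-tQ}\le 1+(\det\gamma_F)^{-Q}$, where the last step treats the cases $\det\gamma_F\ge1$ and $\det\gamma_F<1$ separately. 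Taking expectations gives $\sup_{Y\in\mathscr F^{\mathrm{int}}}\E[(\det\gamma_Y)^{-Q}]\le 1+K_-(Q)$, uniformly in $t\in[0,1]$.

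The endpoint $t=0$ is consistent with this: there $\gamma_Y=I_d$, and the bound reads $1\le 1+K_-(Q)$.

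The only real obstacle is the determinant lower bound, which must hold uniformly in $t$ including near $t=1$. Log-concavity of the determinant handles it directly, and the orthogonality \eqref{eq:F-Z-orthogonal} makes the cross terms vanish so that $\gamma_Y$ has the clean convex-combination form above.
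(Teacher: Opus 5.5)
Your proof is correct and has the same structure as the paper's argument. The positive part uses $D^kF_{t,i}=\sqrt t\,D^kF_i$ for $k\geq2$ together with the moments of $Z$, and the negative part uses the identity $\gamma_{F_t}=t\gamma_F+(1-t)I_d$ obtained from \eqref{eq:F-Z-orthogonal}.

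The only difference is the determinant lower bound. You use log-concavity of $\det$ to get $\det\gamma_{F_t}\geq(\det\gamma_F)^t$, which gives the constant $1+K_-(Q)$. The paper instead splits at $t=1/2$ and uses the Loewner bounds $\gamma_{F_t}\geq I_d/2$ or $\gamma_{F_t}\geq\gamma_F/2$, which gives the slightly cruder constant $2^{dQ}(1+K_-(Q))$. Both arguments are valid.
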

	
	\begin{proof}
		For every $i\in[d]$, $DF_{t,i}=\sqrt t\,DF_i+\sqrt{1-t}\,DZ_i$. Let $\gamma_t$ denote the Malliavin matrix of $F_t$. Since $DZ_i=h_i$ and \eqref{eq:F-Z-orthogonal},
		we have  
		\begin{align*}
			(\gamma_t)_{ij}
			&=\langle D F_{t,i}, D F_{t,j}\rangle_{\cH}=t\langle DF_i,DF_j\rangle_{\cH}
			+(1-t)\langle DZ_i,DZ_j\rangle_{\cH}=t(\gamma_F)_{ij}+(1-t)\delta_{ij}.
		\end{align*}
		Thus $\gamma_t=t\gamma_F+(1-t)I_d$.

		For two symmetric matrices $A$ and $B$, we consider the Loewner order, that is, we say $A\geq B$ if $A-B$ is a non-negative definite matrix. For any $a\in\R^d$, since $\gamma_F$ and $I_d$ are non-negative definite,
		\begin{align*}
			a^{T} \gamma_t a = t  a^{T} \gamma_F a + (1-t) a^{T}a \geq \begin{cases}
				 a^{T}a/2, & 0\leq t\leq 1/2,\\
				 a^{T} \gamma_F a/2, & 1/2<t\leq 1,
			\end{cases}\Longrightarrow  \gamma_t \geq \begin{cases}
				I_d/2, & 0\leq t\leq 1/2,\\
				\gamma_F/2, & 1/2<t\leq 1.
			\end{cases}
		\end{align*}
		Then $\det\gamma_t \geq 2^{-d}$ for $0\leq t\leq1/2$ and  $ \det\gamma_t\geq 2^{-d}\det\gamma_F$ for  $1/2\leq t\leq1$.
		Hence $(\det\gamma_t)^{-Q}
		\leq
		2^{dQ}
		\bigl(1+(\det\gamma_F)^{-Q}\bigr)$
		uniformly in $t\in[0,1]$.
		Using the assumption that
		$\mathscr F$ satisfies $\mathcal A(r,P,Q)$, we obtain
		\[
		\sup_{F\in\mathscr F}\sup_{0\le t\le1}
		\E\bigl[(\det\gamma_{F_t})^{-Q}\bigr]
		\le
		2^{dQ}\bigl(1+K_-(Q)\bigr)
		<\infty.
		\]
		Finally, since $DF_{t,i}=\sqrt t\,DF_i+\sqrt{1-t}\,h_i$, $D^kF_{t,i}=\sqrt t\,D^kF_i$ for $2\leq k\leq r$, $\mathscr F$ satisfies $\mathcal A(r,P,Q)$ and the Gaussian vector $Z$ has moments of every order, we know that $\mathscr F^{\mathrm{int}}$ also satisfies $\mathcal A(r,P,Q)$.
	\end{proof}

	\section{Proofs of the main results}
	\label{sec:proof-main}

	\subsection{Finite-order regularity for the approximation sequence}\label{subsec:regularity-approximation-sequence}
	
	We apply the framework of Sections~\ref{sec:interpolation} and \ref{sec:weighted-density} to the fixed-chaos sequence of Theorem~\ref{thm:main}.

	Fix $d\geq1$ and $q\geq2$, and let $F=(F_1,\ldots,F_d)$ with $F_i=I_q(f_i)$, $f_i\in\mathfrak H^{\odot q}$, $i\in[d]$. The next proposition provides quantitative bounds on the $\Gamma$-variables appearing in the density expansion \eqref{eq:density-general-distribution}. 
	
	\begin{proposition}
		\label{prop:Gamma-Sobolev-Chen}
		For every $r\in\mathbb N_0$ and $p>1$, there exists
		$C=C(q,d,r,p)<\infty$ such that
		\begin{equation}\label{eq:Gamma-Sobolev-Chen}
			\max_{\alpha\in[d]^4}\norm{\Gamma_\alpha(F)}_{r,p}
			\leq
			C\Delta_4(F),
			\qquad
			\max_{\alpha\in[d]^5}\norm{\Gamma_\alpha(F)}_{r,p}
			\leq
			C\Delta_4(F)^{5/4}.
		\end{equation}
	\end{proposition}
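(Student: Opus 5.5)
The plan is to reduce everything to $L^2$ bounds and then lift them to arbitrary Malliavin--Sobolev norms. By \eqref{eq:Gamma-finite-chaos}, each $\Gamma_\alpha(F)$ with $|\alpha|\le5$ lies in a finite sum of chaoses $\bigoplus_{j\le N}\mathcal H_j(X)$, with $N$ depending only on $q$ and $d$. Lemma~\ref{lem:finite-chaos-lifting} then gives $\norm{\Gamma_\alpha(F)}_{r,p}\le C\norm{\Gamma_\alpha(F)}_1\le C\norm{\Gamma_\alpha(F)}_2$. So it suffices to prove
\[
\max_{\alpha\in[d]^4}\E[\Gamma_\alpha(F)^2]^{1/2}\le C\Delta_4(F),
\qquad
\max_{\alpha\in[d]^5}\E[\Gamma_\alpha(F)^2]^{1/2}\le C\Delta_4(F)^{5/4}.
\]

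The first step is to write $\Gamma_\alpha(F)$ via iterated product formulas, as in \cite{NOREDDINE20111008} and \cite{Chen2024}. Since $-DL^{-1}I_q(f)=q^{-1}DI_q(f)$, each $\Gamma_\alpha$ becomes a finite linear combination of multiple integrals $I_{s}(\cdot)$. Their kernels are iterated contractions
\[
(\cdots((f_{\alpha_1}\tilde\otimes_{r_1}f_{\alpha_2})\tilde\otimes_{r_2}f_{\alpha_3})\cdots)\tilde\otimes_{r_{k-1}}f_{\alpha_k},
\qquad 1\le r_j\le q.
\]
The $L^2$ norm is then bounded by a sum of norms of such iterated contractions, plus the constant term $\E\Gamma_\alpha=\kappa_\alpha/(k-1)!$.

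The second step is to control contractions by the diagonal fourth cumulants. In the fixed-chaos setting, for $1\le r\le q-1$,
\[
\norm{f_i\otimes_r f_i}^2\le C\kappa_{iiii}(F)\le C\Delta_4(F).
\]
Cauchy--Schwarz gives $\norm{f_i\otimes_r f_j}^2=\langle f_i\otimes_{q-r}f_i,f_j\otimes_{q-r}f_j\rangle\le C\Delta_4$, so every non-extreme contraction is $O(\Delta_4^{1/2})$. A full contraction $r=q$ gives $q!\langle f_i,f_j\rangle=\delta_{ij}$, which is not small. For $|\alpha|=4$, I would expand each iterated kernel and check that every term contains at least two small non-extreme contractions, or one small factor together with a constant cumulant-type term. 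Mixed fourth cumulants, and hence the constant $\E\Gamma_\alpha$, are themselves $O(\Delta_4)$ by Cauchy--Schwarz on the contraction representation of $\kappa_{ijkl}$. This yields $O(\Delta_4)$. For $|\alpha|=5$, the claimed exponent $5/4$ means each term should carry the equivalent of five quarter-powers. I would rewrite each term's squared norm as a contraction network of $2\cdot5$ kernels and use the generalized Cauchy--Schwarz (Hölder) for contraction graphs. The goal is to bound it by $\prod_{v}\max_i\norm{f_i\otimes_{r}f_i}^{1/2}$ over the ten vertices, with each vertex contributing $\Delta_4^{1/4}$. This is the estimate of \cite[Chen2024]{Chen2024}, and I would invoke or reproduce it.

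The main obstacle is the bookkeeping of terms containing full contractions $r=q$, which carry no smallness. I would argue that in $\Gamma_\alpha$ a full contraction at step $j$ produces $\langle f_a,f_b\rangle\cdot(\text{lower }\Gamma)$, and that the centering implicit in $-DL^{-1}$ kills the zero-chaos part. Consequently every surviving kernel uses at least $|\alpha|-1$ non-extreme contractions, or degenerates into a product of a cumulant and a lower-order term. I would handle that reduction first and then apply the graph Hölder bound.
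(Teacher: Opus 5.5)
Your proposal is correct and follows the paper's proof. Membership in a finite sum of chaoses \eqref{eq:Gamma-finite-chaos} plus Lemma~\ref{lem:finite-chaos-lifting} reduces both estimates to a low-order moment bound. Your $L^2$ version is equivalent to the paper's $L^1$ version on a fixed finite sum of chaoses, and that bound is exactly \cite[Proposition~3.9]{Chen2024}.

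Once you cite that proposition, your contraction-counting sketch is unnecessary, and as written it is only heuristic. For example, last-step full contractions produce constant terms that need not factor as $\langle f_a,f_b\rangle$ times a lower-order $\Gamma$. So rely on the citation rather than reproducing the argument in that form.
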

	
	\begin{proof}
		For every $k\geq1$, \eqref{eq:Gamma-finite-chaos} gives an integer
		$N_k=N_k(q)$ such that
		\[
		\Gamma_\alpha(F)
		\in
		\bigoplus_{j=0}^{N_k}\mathcal H_j(X),
		\qquad \alpha\in[d]^k.
		\]
		Hence by Lemma~\ref{lem:finite-chaos-lifting} and \cite[Proposition~3.9]{Chen2024},
		\begin{align*}
		\max_{\alpha\in[d]^k}
		\norm{\Gamma_\alpha(F)}_{r,p}
		\leq
		C
		\max_{\alpha\in[d]^k}
		\E|\Gamma_\alpha(F)|\leq C \begin{cases}
			 \Delta_4(F), & k=4,\\
			 \Delta_4(F)^{5/4}, &k=5.
		\end{cases}
		\end{align*}
		This proves \eqref{eq:Gamma-Sobolev-Chen}. 
	\end{proof}

	We return to the sequence $(F_n)$ of Theorem~\ref{thm:main}.
	The next proposition shows that, for any prescribed exponents
	$R\in\mathbb N_0$ and $P,Q>1$, the corresponding finite-order Malliavin condition
	$\mathcal A(R,P,Q)$ holds uniformly along a sufficiently far tail of the
	sequence.

	\begin{proposition}
		\label{prop:eventual-finite-order-regularity}
		For every $R\in\mathbb N_0$ and $P,Q>1$, there exists an integer
		$ N=N(R,P,Q,d,q,(F_n))$ such that the tail family $\mathscr F_N:=\{F_n:n\geq N\}$ satisfies $\mathcal A(R,P,Q)$.
	\end{proposition}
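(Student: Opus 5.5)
The proof splits along the two halves of $\mathcal A(R,P,Q)$: the positive Malliavin--Sobolev bound $K_+(R,P)$ and the negative moment $K_-(Q)$.

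\emph{Positive part.} Since $\operatorname{Cov}(F_n)=I_d$, each component satisfies $\E[F_{n,i}^2]=1$. So $\norm{F_{n,i}}_{1}\leq\norm{F_{n,i}}_{2}=1$. Each $F_{n,i}$ lies in the fixed chaos $\mathcal H_q(X)$. Applying Lemma~\ref{lem:finite-chaos-lifting} with $N=q$, $r=R$ and exponent $P$ gives
\[
\norm{F_{n,i}}_{R,P}\leq C(q,R,P)\norm{F_{n,i}}_{1}\leq C(q,R,P),
\]
uniformly in $n$ and $i$. Hence $K_+(R,P)<\infty$ for the whole sequence, not only for a tail.

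\emph{Negative part.} This is the nontrivial step. I would invoke the superconvergence theorem \cite[Theorem~4]{HMP2024}. It applies to a sequence of vectors with components in a fixed finite sum of Wiener chaoses that converges in law to a nondegenerate Gaussian vector. For every $Q>1$ it yields an index $N_Q$ with
\[
\sup_{n\geq N_Q}\E[(\det\gamma_{F_n})^{-Q}]<\infty .
\]
Our hypotheses match: $F_n\xrightarrow{d}\mathcal N_d(0,I_d)$, the limit covariance $I_d$ is nondegenerate, and the components lie in $\mathcal H_q(X)$. The main care needed is checking that the formulation in \cite{HMP2024} fits ours. That paper may state its conclusion as a bound on $\E[\det\gamma_{F_n}^{-Q}]$, or on $\E[\lambda_{\min}(\gamma_{F_n})^{-Q}]$, or for normalized versions of $\gamma_{F_n}$. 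If it is stated via the smallest eigenvalue, I would use $\det\gamma\geq\lambda_{\min}(\gamma)^d$ and apply the result with exponent $dQ$ in place of $Q$.

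\emph{Conclusion.} Set $N:=N_Q$, or $N_{dQ}$ in the eigenvalue formulation. Both components of $\mathcal A(R,P,Q)$ then hold on $\mathscr F_N=\{F_n:n\geq N\}$. The index $N$ depends only on $Q$, $d$, $q$ and the sequence; the dependence on $R$ and $P$ in the statement is harmless.
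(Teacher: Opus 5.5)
Your proposal is correct and follows the paper's proof essentially verbatim: Lemma~\ref{lem:finite-chaos-lifting} with $\norm{F_{n,i}}_1\le\norm{F_{n,i}}_2=1$ gives the uniform positive bound for every $n$, and \cite[Theorem~4]{HMP2024} gives the negative moment of $\det\gamma_{F_n}$ along a tail. Your extra reduction via $\det\gamma\ge\lambda_{\min}(\gamma)^d$ is a valid safeguard in case that result is stated through the smallest eigenvalue, and the paper does not need it.
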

	
	\begin{proof}
		Since each $F_{n,i}\in \mathcal H_q(X)$ and
		$\norm{F_{n,i}}_2=1$, Lemma~\ref{lem:finite-chaos-lifting} gives
		\[
		\sup_{n\geq1}\max_{1\leq i\leq d}
		\norm{F_{n,i}}_{R,P}<\infty.
		\]
		For the fixed exponent $Q>1$, \cite[Theorem~4]{HMP2024} yields constants $N_Q\geq1$ and $C_Q<\infty$ such that
		\[
		\sup_{n\geq N_Q}
		\E\left[(\det\gamma_{F_n})^{-Q}\right]
		\leq C_Q.
		\]
		Therefore, for $N=N_Q$, the family $\mathscr F_N:=\{F_n:n\geq N\}$ satisfies $\mathcal A(R,P,Q)$.
	\end{proof}
	
	Choose a closed subspace $\mathfrak H_0$ of $\mathfrak H$ such that the whole sequence
	$(F_n)$ is measurable with respect to $\mathcal G_0=
	\sigma\{X(h):h\in\mathfrak H_0\}$.
	Enlarging the underlying Hilbert space if necessary, as in
	Section~\ref{sec:interpolation}, we use the same independent
	standard Gaussian vector $Z$ for all $n$.
	Set
	\begin{equation}\label{eq:Fnt}
		F_{n,t}:=\sqrt t\,F_n+\sqrt{1-t}\,Z,
		\qquad 0\leq t\leq1.
	\end{equation}
	For
	$G\in L^1(\Omega)$, we write $p_{n,t,G}$ for the density of the signed measure $\mu_{F_{n,t},G}(\cdot) =\E\left[ G\mathbf 1_{\{F_{n,t}\in \cdot\}} \right] $ whenever it exists, and put $p_{n,t}=p_{n,t,1}$.
	
	Combining Proposition~\ref{prop:eventual-finite-order-regularity} with
	Proposition~\ref{prop:interpolation-covariance}, the finite-order
	conditions required by Corollary~\ref{cor:weighted-Sobolev} hold
	uniformly along the interpolation paths. We thus obtain the
	following weighted-density estimates.
	
	\begin{proposition}
		\label{prop:uniform-weighted-density-path}
		Fix $s\in\mathbb N_0$ and $p\in[1,\infty]$. Then there exist finite exponents $P=P(d,s,p)>1$ and $Q=Q(d,s,p)>1$, an integer $N$ and a constant $C<\infty$ such that the path family $\mathscr F_N^{\mathrm{int}}
		:=
		\{F_{n,t}:n\geq N,\ 0\leq t\leq1\}$
		satisfies $\mathcal A(d+s+1,P,Q)$ and, for every $n\geq N$, $t\in[0,1]$, and $G\in\mathbb D^{d+s,P}$,
		\begin{equation}\label{eq:uniform-weighted-density-general}
			p_{n,t,G}\in W^{s,p}(\mathbb R^d),
			\qquad \norm{p_{n,t,G}}_{W^{s,p}(\mathbb R^d)}
			\leq
			C\norm{G}_{d+s,P}.
		\end{equation}
		The constants $N,C$ may depend on $d,q,s,p$ and on the sequence $(F_n)$, but not on $n,t$ or $G$.
	\end{proposition}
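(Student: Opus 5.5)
The plan is to chain three earlier results: Corollary~\ref{cor:weighted-Sobolev} gives the exponents, Proposition~\ref{prop:eventual-finite-order-regularity} gives the tail, and Proposition~\ref{prop:interpolation-covariance} transfers the condition to the path.

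First, apply Corollary~\ref{cor:weighted-Sobolev} with $m=s$ and the given $p$. This fixes finite exponents $P=P(d,s,p)>1$ and $Q=Q(d,s,p)>1$ with the following property: whenever a family $\mathscr Y$ satisfies $\mathcal A(d+s+1,P,Q)$, there is a constant $C$, depending only on the bounds $K_+(d+s+1,P)$ and $K_-(Q)$ of that family, such that
\[
p_{Y,G}\in W^{s,p}(\mathbb R^d),\qquad \norm{p_{Y,G}}_{W^{s,p}}\leq C\norm{G}_{d+s,P}
\]
for all $Y\in\mathscr Y$ and $G\in\mathbb D^{d+s,P}$. The exponents depend only on $d,s,p$, as the statement requires.

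Second, apply Proposition~\ref{prop:eventual-finite-order-regularity} with $R=d+s+1$ and these $P,Q$. It yields an integer $N$ such that the tail $\mathscr F_N=\{F_n:n\ge N\}$ satisfies $\mathcal A(d+s+1,P,Q)$. Each $F_n$ is $\mathcal G_0$-measurable by the choice of $\mathfrak H_0$, and the same $Z$ is used for every $n$. Proposition~\ref{prop:interpolation-covariance} therefore applies to $\mathscr F=\mathscr F_N$. It shows that $\mathscr F_N^{\mathrm{int}}=\{F_{n,t}:n\ge N,\ 0\le t\le1\}$ satisfies $\mathcal A(d+s+1,P,Q)$, with new bounds that are finite and independent of $n$ and $t$. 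This already covers the endpoints $t=0$ and $t=1$.

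Third, apply the first step to $\mathscr Y=\mathscr F_N^{\mathrm{int}}$. This gives \eqref{eq:uniform-weighted-density-general} with a single constant $C$. That constant depends on the family bounds, hence on $d,q,s,p$ and $(F_n)$, but not on $n$, $t$ or $G$. The weighted measure $\mu_{F_{n,t},G}$ in the statement is exactly the measure $\mu_{Y,G}$ with $Y=F_{n,t}$, so $p_{n,t,G}=p_{Y,G}$.

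One consistency point needs checking. For $t<1$, Proposition~\ref{prop:gaussian-smoothing} also provides the smooth convolution density. Both constructions are densities of the same signed measure, so they agree almost everywhere and no conflict arises. No step is genuinely hard. The only thing to verify is the order of quantifiers: $P$ and $Q$ must be fixed from Corollary~\ref{cor:weighted-Sobolev} before $N$ is chosen, since $N$ depends on $Q$ through \cite[Theorem~4]{HMP2024}.
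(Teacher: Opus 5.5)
Your proposal is correct and matches the paper's proof: fix $P,Q$ from Corollary~\ref{cor:weighted-Sobolev}, take $N$ from Proposition~\ref{prop:eventual-finite-order-regularity}, transfer $\mathcal A(d+s+1,P,Q)$ to the path family via Proposition~\ref{prop:interpolation-covariance}, then apply Corollary~\ref{cor:weighted-Sobolev} to $\mathscr F_N^{\mathrm{int}}$. Your two extra remarks are accurate and spell out points the paper leaves implicit: $P,Q$ must be fixed before $N$, and the density agrees a.e.\ with the convolution density of Proposition~\ref{prop:gaussian-smoothing} for $t<1$.
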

	
	\begin{proof}
		Choose the finite exponents $P,Q$ furnished by Corollary~\ref{cor:weighted-Sobolev} for $W^{s,p}(\mathbb R^d)$. Proposition~\ref{prop:eventual-finite-order-regularity} gives $N$ such that
		$\mathscr F_N$ satisfies $\mathcal A(d+s+1,P,Q)$. Proposition~\ref{prop:interpolation-covariance} then shows that the whole path family $\mathscr F_N^{\mathrm{int}}$ also satisfies $\mathcal A(d+s+1,P,Q)$. Applying Corollary~\ref{cor:weighted-Sobolev} yields \eqref{eq:uniform-weighted-density-general}.
	\end{proof}

	Applying Proposition~\ref{prop:uniform-weighted-density-path} with $G=1$ and
	$G=\Gamma_\alpha(F_n)$, and using Proposition~\ref{prop:Gamma-Sobolev-Chen} at
	Malliavin--Sobolev order $(d+s, P)$ with corresponding integrability exponent $P$,
	gives the following bounds.

	\begin{corollary}
		\label{cor:uniform-path-Gamma}
		Fix $s\in\mathbb N_0$ and $p\in[1,\infty]$. There exist an integer $N$ and a constant $C<\infty$ such that, for every $n\geq N$,
		\begin{align}
			&\sup_{0\leq t\leq1}
			\norm{p_{n,t}}_{W^{s,p}}
			\leq C,\nonumber\\
			&\max_{\alpha\in[d]^4}
			\sup_{0\leq t\leq1}
			\norm{p_{n,t,\Gamma_\alpha(F_n)}}_{W^{s,p}}
			\leq
			C\Delta_4(F_n),\nonumber \\
			&\max_{\alpha\in[d]^5}
			\sup_{0\leq t\leq1}
			\norm{p_{n,t,\Gamma_\alpha(F_n)}}_{W^{s,p}}
			\leq
			C\Delta_4(F_n)^{5/4}.\label{eq:uniform-Gamma5-density}
		\end{align}
	\end{corollary}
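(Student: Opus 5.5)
The plan is to read off all three bounds from Proposition~\ref{prop:uniform-weighted-density-path} with $G=1$ and $G=\Gamma_\alpha(F_n)$. The $\Gamma$-variables are controlled in Malliavin--Sobolev norm through Proposition~\ref{prop:Gamma-Sobolev-Chen}.

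First we fix the exponents. Given $s\in\mathbb N_0$ and $p\in[1,\infty]$, Proposition~\ref{prop:uniform-weighted-density-path} supplies finite exponents $P=P(d,s,p)$ and $Q=Q(d,s,p)$, an integer $N$ and a constant $C_0$. They satisfy
\[
\norm{p_{n,t,G}}_{W^{s,p}}\le C_0\norm{G}_{d+s,P}
\]
for all $n\ge N$, all $t\in[0,1]$ and all $G\in\mathbb D^{d+s,P}$. The key point is that $N$ and $C_0$ do not depend on $G$. Hence the same tail and the same constant serve all three lines at once.

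For the first bound we take $G=1$. Then $\norm{1}_{d+s,P}=1$, since all Malliavin derivatives of a constant vanish. This gives $\sup_t\norm{p_{n,t}}_{W^{s,p}}\le C_0$ uniformly in $n\ge N$.

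For the second and third bounds we take $G=\Gamma_\alpha(F_n)$ with $|\alpha|\in\{4,5\}$. We must first check that $\Gamma_\alpha(F_n)\in\mathbb D^{d+s,P}$. This holds because $\Gamma_\alpha(F_n)\in\mathbb D^\infty$ by \cite[Lemma~4.3]{NOREDDINE20111008}; alternatively, it is a finite chaos sum by \eqref{eq:Gamma-finite-chaos}. Next we apply Proposition~\ref{prop:Gamma-Sobolev-Chen} with $r=d+s$ and integrability exponent $P$. It gives
\[
\norm{\Gamma_\alpha(F_n)}_{d+s,P}\le C_1\Delta_4(F_n)\quad\text{for }|\alpha|=4,
\]
and $\Delta_4(F_n)^{5/4}$ in place of $\Delta_4(F_n)$ when $|\alpha|=5$. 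The constant $C_1=C_1(q,d,d+s,P)$ does not depend on $n$. This independence is the step to check most carefully: Proposition~\ref{prop:Gamma-Sobolev-Chen} is stated for a fixed chaos order $q$ and dimension $d$, and its constant comes from Lemma~\ref{lem:finite-chaos-lifting} and \cite[Proposition~3.9]{Chen2024}. Both depend only on the chaos level $N_k(q)$ and the exponents, so $C_1$ is uniform in $n$.

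Finally, we combine the two estimates. For $n\ge N$, every $t\in[0,1]$ and every $\alpha$ of length $4$ or $5$, we get $\norm{p_{n,t,\Gamma_\alpha(F_n)}}_{W^{s,p}}\le C_0C_1\Delta_4(F_n)$, respectively $\le C_0C_1\Delta_4(F_n)^{5/4}$. We then take the supremum over $t$ and the maximum over the finitely many $\alpha$, and set $C=\max\{C_0,C_0C_1\}$.

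For $t<1$ these densities coincide with the smooth ones from Proposition~\ref{prop:gaussian-smoothing}, because the densities of a signed measure are a.e. unique. For $t=1$ they are the densities produced by Proposition~\ref{prop:weighted-density-formula}. So the statement holds without ambiguity about which version of the density is meant.
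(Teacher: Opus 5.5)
Your proposal is correct and matches the paper's argument. The paper's proof is the one-line application of Proposition~\ref{prop:uniform-weighted-density-path} with $G=1$ and $G=\Gamma_\alpha(F_n)$, using that $N$ and $C$ there do not depend on $G$. It then bounds $\norm{\Gamma_\alpha(F_n)}_{d+s,P}$ via Proposition~\ref{prop:Gamma-Sobolev-Chen} at order $r=d+s$ with integrability exponent $P$. Your extra checks ($\norm{1}_{d+s,P}=1$, the $n$-uniformity of the constant, and which version of the density is meant) are accurate details that the paper leaves implicit.
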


	\subsection{Optimal Sobolev rate}

	We first prove a uniform estimate along the whole interpolation path. Besides yielding the upper bound at $t=1$ in Theorem~\ref{thm:main}, this estimate will be used to control the error after the leading Gaussian correction is extracted.
	
	\begin{proposition}[Sobolev upper bound]
		\label{prop:uniform-path-control}
		Fix $m\in\mathbb N_0$ and $p\in[1,\infty]$. There exist $N\geq1$ and $C<\infty$ such that, for every $n\geq N$,
		\begin{equation}\label{eq:uniform-path-control}
			\sup_{0\leq u\leq1}
			\norm{p_{n,u}-\phi}_{W^{m,p}(\mathbb R^d)}
			\leq
			C M(F_n).
		\end{equation}
	\end{proposition}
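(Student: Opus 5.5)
We plan to apply the distributional identity of Theorem~\ref{thm:density-interpolation-distribution} with $M=4$, but to the interpolated vector $F_{n,u}$ instead of $F_n$. The reason is that we want a bound that is uniform along the path. First we check that $F_{n,u}=\sqrt u\,F_n+\sqrt{1-u}\,Z$ is again a valid input. It is centered, it has identity covariance, and its components lie in $\mathbb D^\infty$. Since $Z$ is a standard Gaussian independent of $F_n$, every cumulant of order $r\ge3$ scales as $\kappa_\alpha(F_{n,u})=u^{r/2}\kappa_\alpha(F_n)$.

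Composing the two interpolations gives another interpolation. Take an independent Gaussian $Z'$ built from a further orthonormal family. Then $\sqrt t\,F_{n,u}+\sqrt{1-t}\,Z'$ has the same law as $F_{n,tu}$. The $\Gamma$-variables of $F_{n,u}$ are also explicit. The Gaussian part contributes nothing beyond order two, so by \eqref{eq:Gamma-Z-orthogonal} we get $\Gamma_\alpha(F_{n,u})=u^{|\alpha|/2}\Gamma_\alpha(F_n)$ for $|\alpha|\ge3$. Rather than verify this in general, I would simply rerun the proof of Proposition~\ref{prop:general-interpolation} along the path $s\mapsto F_{n,s}$, for $s\in[0,u]$. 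This gives the identity
\[
p_{n,u}-\phi=\sum_{r=3}^{4}\frac{(-1)^r}{2(r-1)!}\sum_{\alpha\in[d]^r}\kappa_\alpha(F_n)\int_0^u s^{(r-2)/2}\partial_\alpha p_{n,s}\,\dd s+\frac{1}{2}\sum_{\alpha\in[d]^5}\int_0^u s^{3/2}\partial_\alpha p_{n,s,\Gamma_\alpha(F_n)}\,\dd s.
\]
It holds in $\mathscr S'(\mathbb R^d)$: integrate \eqref{eq:general-interpolation} from $0$ to $u$, and use the Gaussian smoothing of Proposition~\ref{prop:gaussian-smoothing} for $s<1$.

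Next we upgrade this to an identity in $W^{m,p}$ and estimate it. Apply Corollary~\ref{cor:uniform-path-Gamma} with $s=m+5$. The integrands $\partial_\alpha p_{n,s}$ and $\partial_\alpha p_{n,s,\Gamma_\alpha(F_n)}$ with $|\alpha|\le5$ then lie in $W^{m,p}$, with norms bounded uniformly in $s\in[0,1]$ and $n\ge N$. Measurability in $s$ and Bochner integrability follow from continuity in $s$. This continuity comes from the representation \eqref{eq:oriented-density-formula} combined with dominated convergence, or alternatively from testing against Schwartz functions, since the $W^{m,p}$ norm is lower semicontinuous under distributional convergence. The last point also handles $p\in\{1,\infty\}$: one bounds the pairing with test functions rather than working with a Bochner integral. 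The left-hand side is a distribution equal to a $W^{m,p}$-valued integral, so $p_{n,u}-\phi\in W^{m,p}$. We get
\[
\norm{p_{n,u}-\phi}_{W^{m,p}}\le C\big(\Delta_3(F_n)+\Delta_4'(F_n)+\Delta_4(F_n)^{5/4}\big).
\]
Here $\Delta_4'$ denotes the maximum of $|\kappa_\alpha(F_n)|$ over all $\alpha\in[d]^4$.

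The mixed fourth cumulants are controlled by the first estimate of Proposition~\ref{prop:Gamma-Sobolev-Chen} together with Theorem~\ref{thm:cumulant-Gamma}. This gives $|\kappa_\alpha(F_n)|=6|\E\Gamma_\alpha(F_n)|\le C\Delta_4(F_n)$. Since $F_n\to\mathcal N_d(0,I_d)$, the fourth moment theorem gives $\Delta_4(F_n)\to0$, so $\Delta_4(F_n)^{5/4}\le\Delta_4(F_n)$ for large $n$. The bound is uniform in $u$, which proves \eqref{eq:uniform-path-control}.

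The main obstacle is the first step: making rigorous the identity for $F_{n,u}$ in terms of quantities of $F_n$. This requires the $\Gamma$-scaling and the treatment of the endpoint $s\uparrow u$, including $u=1$ where no smoothing is available. I would avoid composing interpolations and instead integrate \eqref{eq:general-interpolation} for $F_n$ directly over $[\varepsilon,u]$. The terms at $s<1$ are smooth. The endpoint limit is taken in $\mathscr S'$, using that $F_{n,s}\to F_{n,u}$ in $L^q$ as $s\uparrow u$.
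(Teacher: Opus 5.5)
Your proof is correct. Once you drop the composition-of-interpolations idea (which is right but not needed), your route is the paper's:
\begin{itemize}
\item integrate \eqref{eq:general-interpolation} for $F_n$ over $[0,u]$;
\item pass to densities by Gaussian smoothing;
\item upgrade to $W^{m,p}$ with the uniform path bounds of Corollary~\ref{cor:uniform-path-Gamma};
\item estimate term by term.
\end{itemize}

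The only real difference is the truncation order. The paper takes $M=3$, so the remainder is the fourth-order weighted term $\frac12\sum_{\eta\in[d]^4}\int_0^u t\,\partial_\eta p_{n,t,\Gamma_\eta(F_n)}\dd t$. The first estimate of Proposition~\ref{prop:Gamma-Sobolev-Chen} bounds this directly by $C\Delta_4(F_n)$. That needs only $W^{m+4,p}$ control and no separate treatment of mixed fourth cumulants.

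Your choice $M=4$ is valid but costs more:
\begin{itemize}
\item an extra derivative ($s=m+5$);
\item the fifth-order bound $\Delta_4(F_n)^{5/4}$;
\item the cumulant--$\Gamma$ step $|\kappa_\eta(F_n)|\le 6\,\E|\Gamma_\eta(F_n)|\le C\Delta_4(F_n)$ for mixed fourth cumulants.
\end{itemize}
It buys nothing for the upper bound. It is essentially the expansion used in Proposition~\ref{prop:density-Edgeworth}, taken at a general endpoint $u$ and before $p_{n,s}$ is replaced by $\phi$.

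Two small remarks. First, $\Delta_4(F_n)^{5/4}\le C\Delta_4(F_n)$ holds without invoking $\Delta_4(F_n)\to0$, since $\Delta_4(F_n)$ is uniformly bounded by hypercontractivity. Second, the coefficient of the fifth-order remainder is $(-1)^{M+1}/2=-\frac12$, not $+\frac12$; this does not affect the norm estimate.

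Your handling of measurability in $s$ and of $p\in\{1,\infty\}$ is sound. Bounding pairings against test functions by duality works, using that $p_{n,u}\in W^{m,p}$ is already known from Proposition~\ref{prop:uniform-weighted-density-path}. This fills in a detail the paper passes over.
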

	
	\begin{proof}
		By Corollary~\ref{cor:uniform-path-Gamma}, there exist an integer $N$ and a constant $C<\infty$ such that, for every $n\geq N$,
		\[
		\sup_{0\leq t\leq1}
		\norm{p_{n,t}}_{W^{m+3,p}}
		\leq C,\qquad \max_{\eta\in[d]^4}
		\sup_{0\leq t\leq1}
		\norm{p_{n,t,\Gamma_\eta(F_n)}}_{W^{m+4,p}}
		\leq
		C\Delta_4(F_n).
		\]
		Fix $n\geq N$ and $u\in[0,1]$. Integrating the test-function identity
		\eqref{eq:general-interpolation} with $M=3$ over $[0,u]$, and then using Proposition~\ref{prop:uniform-weighted-density-path} together with the uniform
		estimates above, we obtain in $W^{m,p}(\R^d)$,
		\begin{equation*}
			\begin{aligned}
				p_{n,u}-\phi
				={}&
				-\frac14
				\sum_{\alpha\in[d]^3}
				\kappa_\alpha(F_n)
				\int_0^u\sqrt t\,
				\partial_\alpha p_{n,t}\,\dd t+
				\frac12
				\sum_{\eta\in[d]^4}
				\int_0^u t\,
				\partial_\eta
				p_{n,t,\Gamma_\eta(F_n)}\,\dd t .
			\end{aligned}
		\end{equation*}
		The $W^{m,p}$ norm of the first term is bounded by
		\begin{align*}
			\sum_{\alpha\in[d]^3}
			|\kappa_\alpha(F_n)|
			\int_0^u
			\sqrt t\,
			\norm{p_{n,t}}_{W^{m+3,p}}
			\,\dd t\leq
			C\Delta_3(F_n),
		\end{align*}
		while the $W^{m,p}$ norm of the second term is controlled by
		\begin{align*}
			 C \max_{\eta\in[d]^4}
			\sup_{0\leq t\leq1}
			\norm{p_{n,t,\Gamma_\eta(F_n)}}_{W^{m+4,p}} \leq
			C\Delta_4(F_n).
		\end{align*}
		Therefore,
		\[
		\norm{p_{n,u}-\phi}_{W^{m,p}}
		\leq
		C\Delta_3(F_n)+C\Delta_4(F_n)
		\leq
		CM(F_n),
		\]
		uniformly in $u$. For $p=\infty$, the same argument is applied to the bounded continuous representatives ensured by
		Proposition~\ref{prop:weighted-density-formula}. This proves \eqref{eq:uniform-path-control}.
	\end{proof}

	To derive the lower bound, we now keep one more cumulant term in the interpolation formula. This produces a deterministic Gaussian correction and a remainder of smaller order.
	
	For a random vector $F$ with $\E|F|^4<\infty$, define
	\begin{equation}\label{eq:Edgeworth-correction}
		\mathcal E_F
		:=
		-\frac16
		\sum_{\alpha\in[d]^3}
		\kappa_\alpha(F)\partial_\alpha\phi
		+
		\frac1{24}
		\sum_{\eta\in[d]^4}
		\kappa_\eta(F)\partial_\eta\phi.
	\end{equation}
	
	\begin{proposition}
		\label{prop:density-Edgeworth}
		Fix $m\in\mathbb N_0$ and $p\in[1,\infty]$. There exist $N\geq1$ and
		$C<\infty$ such that, for every $n\geq N$,
		\begin{equation}\label{eq:Edgeworth-expand}
			p_{F_n}-\phi
			=
			\mathcal E_{F_n}+R_n,
		\end{equation}
		where
		\begin{equation}\label{eq:Edgeworth-remainder-bound}
			\norm{R_n}_{W^{m,p}(\mathbb R^d)}
			\leq
			C\big(
			M(F_n)^2+\Delta_4(F_n)^{5/4}
			\big).
		\end{equation}
	\end{proposition}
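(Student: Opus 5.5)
Starting from Theorem~\ref{thm:density-interpolation-distribution} with $M=4$, we upgrade it to an identity in $W^{m,p}(\mathbb R^d)$ using Proposition~\ref{prop:uniform-weighted-density-path} and Corollary~\ref{cor:uniform-path-Gamma} with $s=m+5$, exactly as in Proposition~\ref{prop:uniform-path-control}. For $n\ge N$ this gives
\[
p_{F_n}-\phi
=
-\frac14\sum_{\alpha\in[d]^3}\kappa_\alpha(F_n)\int_0^1\sqrt t\,\partial_\alpha p_{n,t}\dd t
+\frac1{12}\sum_{\eta\in[d]^4}\kappa_\eta(F_n)\int_0^1 t\,\partial_\eta p_{n,t}\dd t
-\frac12\sum_{\beta\in[d]^5}\int_0^1 t^{3/2}\partial_\beta p_{n,t,\Gamma_\beta(F_n)}\dd t .
\]
By \eqref{eq:uniform-Gamma5-density} with $s=m+5$, the $W^{m,p}$ norm of the last term is at most $C\Delta_4(F_n)^{5/4}$, so it goes into $R_n$.

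In the first two terms we replace $p_{n,t}$ by $\phi$. The $W^{m,p}$ errors are at most
\[
C\Delta_3(F_n)\sup_t\norm{p_{n,t}-\phi}_{W^{m+3,p}}
\quad\text{and}\quad
C\max_{\eta}|\kappa_\eta(F_n)|\sup_t\norm{p_{n,t}-\phi}_{W^{m+4,p}}.
\]
Proposition~\ref{prop:uniform-path-control}, applied with $m+3$ and $m+4$ in place of $m$ (after enlarging $N$), bounds the supremum by $CM(F_n)$, uniformly in $t\in[0,1]$. This uniformity in $t$ is exactly why the estimate was proved along the whole path. It remains to control all fourth cumulants, not only the diagonal ones. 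Theorem~\ref{thm:cumulant-Gamma} gives $\kappa_\eta(F_n)=3!\,\E[\Gamma_\eta(F_n)]$, and Proposition~\ref{prop:Gamma-Sobolev-Chen} then yields $\max_{\eta\in[d]^4}|\kappa_\eta(F_n)|\le C\Delta_4(F_n)\le CM(F_n)$. Hence both replacement errors are $O(M(F_n)^2)$.

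After the replacement, the integrals are explicit: $\int_0^1\sqrt t\dd t=2/3$ and $\int_0^1 t\dd t=1/2$. The coefficients become $-\frac14\cdot\frac23=-\frac16$ and $\frac1{12}\cdot\frac12=\frac1{24}$, which reproduces $\mathcal E_{F_n}$ in \eqref{eq:Edgeworth-correction}. We define $R_n$ as the sum of the two replacement errors and the fifth-order term; it satisfies \eqref{eq:Edgeworth-remainder-bound}.

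The main technical point is upgrading the distributional identity to $W^{m,p}$, i.e.\ interchanging the $t$-integral with the Sobolev norm, including at $p=\infty$. Corollary~\ref{cor:uniform-path-Gamma} gives uniform-in-$t$ bounds. We also need measurability of $t\mapsto\partial_\alpha p_{n,t}$ as a Bochner integrand, or at least that the Bochner integral agrees with the distributional one. For this, pair with Schwartz test functions: every term is controlled uniformly in $t$, so the integrals exist weakly and coincide with the distributional identity. A $W^{m,p}$ norm bound then follows by duality for $p>1$ and by pointwise estimates on the continuous versions for $p\in\{1,\infty\}$, as in Proposition~\ref{prop:uniform-path-control}.
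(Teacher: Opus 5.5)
Your proposal is correct and follows the paper's proof essentially step for step. You apply Theorem~\ref{thm:density-interpolation-distribution} with $M=4$, upgrade it to $W^{m,p}$ via the uniform path estimates, and split $p_{n,t}=\phi+(p_{n,t}-\phi)$ to produce $\mathcal E_{F_n}$. You then bound the three remainder pieces with Proposition~\ref{prop:uniform-path-control} at orders $m+3,m+4$, with $\max_\eta|\kappa_\eta(F_n)|\le C\Delta_4(F_n)$ from Theorem~\ref{thm:cumulant-Gamma} and Proposition~\ref{prop:Gamma-Sobolev-Chen}, and with \eqref{eq:uniform-Gamma5-density} at $s=m+5$; this is exactly the paper's argument. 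Your closing remark on justifying the $t$-integrals as $W^{m,p}$-valued objects addresses a detail the paper passes over silently, and it does not change the argument.
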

	
	\begin{proof}
		The estimates below use Proposition~\ref{prop:uniform-path-control} at orders $m+3$ and $m+4$, and Corollary~\ref{cor:uniform-path-Gamma} for fifth-order $\Gamma$-weights at order $m+5$. Thus by increasing $N$ if necessary, we assume that all these estimates hold simultaneously. Apply Theorem~\ref{thm:density-interpolation-distribution} with $M=4$. Proposition~\ref{prop:uniform-weighted-density-path} turns the distributional identity into the following identity in $W^{m,p}(\mathbb R^d)$:
		\begin{align*}
			p_{F_n}-\phi
			={}&-\frac14\sum_{\alpha\in[d]^3}\kappa_\alpha(F_n)
			\int_0^1\sqrt t\,\partial_\alpha p_{n,t}\,\dd t
			+\frac1{12}\sum_{\eta\in[d]^4}\kappa_\eta(F_n)
			\int_0^1t\,\partial_\eta p_{n,t}\,\dd t\\
			&-\frac12\sum_{\theta\in[d]^5}
			\int_0^1t^{3/2}\partial_\theta p_{n,t,\Gamma_\theta(F_n)}\,\dd t.
		\end{align*}
		Writing $p_{n,t}=\phi+(p_{n,t}-\phi)$ in the first two terms gives \eqref{eq:Edgeworth-expand}, where the remainder is
		\[
		\begin{aligned}
			R_n
			={}&
			-\frac14
			\sum_{\alpha\in[d]^3}
			\kappa_\alpha(F_n)
			\int_0^1\sqrt t\,
			\partial_\alpha(p_{n,t}-\phi)\,\dd t
			+
			\frac1{12}
			\sum_{\eta\in[d]^4}
			\kappa_\eta(F_n)
			\int_0^1t\,
			\partial_\eta(p_{n,t}-\phi)\,\dd t\\
			&-
			\frac12
			\sum_{\theta\in[d]^5}
			\int_0^1t^{3/2}
			\partial_\theta
			p_{n,t,\Gamma_\theta(F_n)}\,\dd t.
		\end{aligned}
		\]
		For the first term, Proposition~\ref{prop:uniform-path-control} at order $m+3$ gives
		\[
		\sup_{0\leq t\leq1}
		\norm{p_{n,t}-\phi}_{W^{m+3,p}}
		\leq
		CM(F_n).
		\]
		Therefore its $W^{m,p}$ norm is controlled by
		\[
		C\Delta_3(F_n)M(F_n)
		\leq
		CM(F_n)^2.
		\]
		Theorem~\ref{thm:cumulant-Gamma} and Proposition~\ref{prop:Gamma-Sobolev-Chen} give
		\[
		\max_{\eta\in[d]^4}|\kappa_\eta(F_n)|
		=
		6\max_{\eta\in[d]^4}|\E[\Gamma_\eta(F_n)]|
		\le
		6\max_{\eta\in[d]^4}\E|\Gamma_\eta(F_n)|
		\le C\Delta_4(F_n).
		\]
		Then using
		Proposition~\ref{prop:uniform-path-control} at order $m+4$, the $W^{m,p}$ norm of the second term is bounded by
		\[
		C
		\max_{\eta\in[d]^4}|\kappa_\eta(F_n)|
		\sup_{0\leq t\leq1}
		\norm{p_{n,t}-\phi}_{W^{m+4,p}}
		\leq
		C\Delta_4(F_n)M(F_n)
		\leq
		CM(F_n)^2.
		\]
		Finally, \eqref{eq:uniform-Gamma5-density} with $s=m+5$ yields
		\[
		\max_{\theta\in[d]^5}
		\sup_{0\leq t\leq1}
		\norm{p_{n,t,\Gamma_\theta(F_n)}}_{W^{m+5,p}}
		\leq
		C\Delta_4(F_n)^{5/4}.
		\]
		This implies that $C\Delta_4(F_n)^{5/4}$ controls the $W^{m,p}$ norm of the last term. Combining the three estimates proves
		\eqref{eq:Edgeworth-remainder-bound}. 
	\end{proof}
	
	To compare $R_n$ with the principal correction $\mathcal E_{F_n}$ and treat the probability metrics introduced in Section~\ref{sec:introduction}, we isolate the finite-dimensional argument once and for all. Set
	\[
	\mathcal V
	:=
	\operatorname{span}\left\{
	\partial_\alpha\phi:\alpha\in[d]^3\cup[d]^4
	\right\}.
	\]
	
	\begin{lemma}
		\label{lem:finite-dimensional-detection}
		Let $\norm{\cdot}_*$ be any norm on $\mathcal V$. There exists $c_*>0$, depending only on $d$ and the chosen norm, such that, whenever $F$ satisfies $\E|F|^4<\infty$ and
		$\kappa_{iiii}(F)\geq0$ for every $i\in[d]$,
		\begin{equation}\label{eq:finite-dimensional-detection}
			\norm{\mathcal E_F}_*
			\geq
			c_*M(F).
		\end{equation}
	\end{lemma}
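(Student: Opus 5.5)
The plan is to reduce the inequality to a statement about a finite-dimensional linear map and then use compactness. Write $\mathcal E_F=T_3(\kappa^{(3)})+T_4(\kappa^{(4)})$, where $\kappa^{(3)}=(\kappa_\alpha(F))_{\alpha\in[d]^3}$ and $\kappa^{(4)}=(\kappa_\eta(F))_{\eta\in[d]^4}$ are symmetric tensors. The maps $T_3(a)=-\frac16\sum_\alpha a_\alpha\partial_\alpha\phi$ and $T_4(b)=\frac1{24}\sum_\eta b_\eta\partial_\eta\phi$ are linear.

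First, use the Hermite structure. We have $\partial_\alpha\phi=(-1)^{|\alpha|}H_\alpha\phi$, where $H_\alpha(x)=\prod_i He_{m_i}(x_i)$ is a multivariate Hermite polynomial and $m_i$ is the multiplicity of $i$ in $\alpha$. So $T_3(a)=\frac16 P_a\phi$, with $P_a$ the odd cubic polynomial $\sum_\alpha a_\alpha H_\alpha$. Likewise $T_4(b)=\frac1{24}Q_b\phi$, with $Q_b$ an even polynomial.

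Second, show that $T_3$ and $T_4$ are injective on symmetric tensors. The top-degree homogeneous part of $P_a$ is $\sum_\alpha a_\alpha x_\alpha$. This is the cubic form associated with the symmetric tensor $a$, and it determines $a$ by polarization. Hence $P_a=0$ forces $a=0$. The same argument applies to $Q_b$.

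Third, use parity. Since $\phi$ is even, $T_3(a)$ is odd and $T_4(b)$ is even. So $T_3(\mathrm{Sym}^3)\cap T_4(\mathrm{Sym}^4)=\{0\}$, and $(a,b)\mapsto T_3(a)+T_4(b)$ is injective on $\mathrm{Sym}^3\oplus\mathrm{Sym}^4$, a finite-dimensional space. By equivalence of norms on finite-dimensional spaces (compactness of the unit sphere), there is $c>0$ with
\[
\norm{T_3(a)+T_4(b)}_*\geq c\max\Bigl\{\max_\alpha|a_\alpha|,\max_\eta|b_\eta|\Bigr\}.
\]
Cumulants are symmetric, so taking $a=\kappa^{(3)}(F)$ and $b=\kappa^{(4)}(F)$ gives $\norm{\mathcal E_F}_*\geq c\max\{\Delta_3(F),\max_\eta|\kappa_\eta(F)|\}$.

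Finally, compare with $M(F)$. We have $\max_\eta|\kappa_\eta(F)|\geq\max_i|\kappa_{iiii}(F)|=\Delta_4(F)$, using $\kappa_{iiii}(F)\geq0$. Hence $\norm{\mathcal E_F}_*\geq cM(F)$. The nonnegativity hypothesis is needed only so that $\Delta_4$, which is defined without absolute values, is dominated by the maximum of the absolute fourth-order cumulants. The constant depends only on $d$ and the norm, because the maps are fixed.

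The main obstacle is the injectivity step. The functions $\partial_\alpha\phi$ for different orderings coincide, so the argument must pass to symmetric tensors. One must also check that the leading homogeneous part of $\sum_\alpha a_\alpha H_\alpha$ is exactly the symmetric cubic (respectively quartic) form and is not cancelled by lower-order Hermite terms. That holds because each $H_\alpha$ is the monomial $x_\alpha$ plus terms of degree at most $|\alpha|-2$.
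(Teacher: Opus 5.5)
Your proposal is correct and follows the paper's proof essentially step by step. Parity splits the odd cubic correction from the even quartic one. Injectivity on symmetric tensors comes from the top-degree homogeneous part of $\partial_\alpha\phi/\phi$: you phrase this via Hermite polynomials and polarization, where the paper uses $\partial_\alpha P_{T^{(r)}}(0)=r!\,T^{(r)}_\alpha$. Equivalence of norms on the finite-dimensional space of symmetric $3$- and $4$-tensors, together with $\kappa_{iiii}(F)\geq0$, then gives the bound $c_*M(F)$.
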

	
	\begin{proof}
		For $r=3,4$, let $\mathbb S_{d,r}$ be the finite-dimensional
		space of symmetric $r$-tensors on $\mathbb R^d$, and define
		\[
		\mathcal L(T^{(3)},T^{(4)})
		:=
		\sum_{\alpha\in[d]^3}T^{(3)}_\alpha\partial_\alpha\phi
		+
		\sum_{\eta\in[d]^4}T^{(4)}_\eta\partial_\eta\phi.
		\]
		We first show that $\mathcal L:\mathbb S_{d,3}\times\mathbb S_{d,4}\to\mathcal V$ is injective. Suppose that
		$\mathcal L(T^{(3)},T^{(4)})=0$. Since $\phi$ is even,  $\sum_{\alpha\in[d]^3}T^{(3)}_\alpha\partial_\alpha\phi$
		is odd, whereas $\sum_{\eta\in[d]^4}T^{(4)}_\eta\partial_\eta\phi$
		is even. Evaluating the identity
		$\mathcal L(T^{(3)},T^{(4)})(\cdot)=0$ at $x$ and $-x$, we obtain
		\[
		\sum_{\alpha\in[d]^3}T^{(3)}_\alpha\partial_\alpha\phi=0,
		\qquad
		\sum_{\eta\in[d]^4}T^{(4)}_\eta\partial_\eta\phi=0.
		\]
		For fixed $r\in\{3,4\}$, divide $\sum_{\alpha\in[d]^r}T^{(r)}_\alpha\partial_\alpha\phi=0$ by $\phi$. Since
		$\partial_\alpha\phi/\phi$ is a polynomial whose homogeneous
		part of degree $r$ is
		$(-1)^r x_{\alpha_1}\cdots x_{\alpha_r}$, we obtain
		\[
		P_{T^{(r)}}(x)
		:=
		\sum_{\alpha\in[d]^r}
		T^{(r)}_\alpha x_{\alpha_1}\cdots x_{\alpha_r}
		\equiv0.
		\]
		For every $\alpha\in[d]^r$, since $T^{(r)}$ is symmetric, $0=(\partial_\alpha P_{T^{(r)}})(0)=r!\,T^{(r)}_\alpha$,
		and therefore $T^{(r)}=0$. Then $\mathcal L$ is injective. Since $\|\cdot\|_*$ is a norm on $\mathcal V$,
		\begin{align*}
			(T^{(3)},T^{(4)})
			\longmapsto
			\|\mathcal L(T^{(3)},T^{(4)})\|_*,\qquad (T^{(3)},T^{(4)})
			\longmapsto \max\left\{
			\max_{\alpha\in[d]^3}|T^{(3)}_\alpha|,
			\max_{\eta\in[d]^4}|T^{(4)}_\eta|
			\right\},
		\end{align*}
		are both norms on the finite-dimensional space
		$\mathbb S_{d,3}\times\mathbb S_{d,4}$. 
		Hence, by equivalence of norms, there exists $c_*>0$ such that
		\[
		\|\mathcal L(T^{(3)},T^{(4)})\|_*
		\ge
		c_*\max\left\{
		\max_{\alpha\in[d]^3}|T^{(3)}_\alpha|,
		\max_{\eta\in[d]^4}|T^{(4)}_\eta|
		\right\}.
		\]
		Applying this with
		$T^{(3)}_\alpha=-\kappa_\alpha(F)/6$ and
		$T^{(4)}_\eta=\kappa_\eta(F)/24$, and using
		$\max_{\eta\in[d]^4}|\kappa_\eta(F)|\geq\Delta_4(F)$ under the assumption
		$\kappa_{iiii}(F)\geq0$, proves \eqref{eq:finite-dimensional-detection}.
	\end{proof}
	
	In particular, Lemma~\ref{lem:finite-dimensional-detection} applies with the $L^p(\mathbb R^d)$ norm for every $p\in[1,\infty]$. For later use, we define three functionals on $L^1(\mathbb R^d)$ whose
	restrictions to $\mathcal V$ will be norms. For $h\in L^1(\mathbb R^d)$, set
	\begin{equation}\label{eq:def-norm}
	\begin{aligned}
		\|h\|_{\mathrm{TV}}
		&:=\frac12\norm h_{L^1},\\
		\|h\|_{\mathrm{Kol}}
		&:=\sup_{x\in\mathbb R^d}
		\left|
		\int_{(-\infty,x_1]\times\cdots\times(-\infty,x_d]}
		h(y)\,\dd y
		\right|,\\
		\|h\|_{\mathrm{BL}}
		&:=\sup\left\{
		\left|\int_{\mathbb R^d}g(x)h(x)\,\dd x\right|:
		g\in C_c^\infty(\mathbb R^d),
		\norm g_{L^\infty}\leq1,
		\operatorname{Lip}(g)\leq1
		\right\}.
	\end{aligned}
\end{equation}
	Their restrictions to $\mathcal V$ are norms. This is immediate for $\|\cdot\|_{\mathrm{TV}}$. If $\|h\|_{\mathrm{Kol}}=0$, then for every $x\in \mathbb{R}^d$, the integral of $h$ on $(-\infty,x_1]\times\cdots\times(-\infty,x_d]$ vanishes, and applying $\partial_1\cdots\partial_d$ in the sense of distributions gives $h=0$. If $\|h\|_{\mathrm{BL}}=0$, then rescaling arbitrary functions in $C_c^\infty(\mathbb R^d)$ shows that $\int gh=0$ for every $g\in C_c^\infty(\mathbb R^d)$, hence again $h=0$ as a distribution. 
	
	Recall \cite[Lemma 5.2.4]{nourdin2012normal}, which shows that for $G=I_q(f)$ with $q\geq2$ and $f\in \mathfrak{H}^{\odot q}$, the fourth cumulant $\E[G^4]-3\E[G^2]^2
	\geq0$. Therefore, Lemma~\ref{lem:finite-dimensional-detection} applies, in particular, to the vectors $(F_n)$ considered in Theorem~\ref{thm:main}. Consequently, for every fixed $p\in[1,\infty]$, there exists a constant $c_p>0$ such that
	\begin{equation}\label{eq:Lp-detection}
		\norm{\mathcal E_{F_n}}_{L^p}
		\geq c_pM(F_n),
	\end{equation}
	and there exists a constant $c_0>0$ such that
	\begin{equation}\label{eq:metric-detection}
		\min\left\{
		\|\mathcal E_{F_n}\|_{\mathrm{TV}},
		\|\mathcal E_{F_n}\|_{\mathrm{Kol}},
		\|\mathcal E_{F_n}\|_{\mathrm{BL}}
		\right\}
		\geq c_0M(F_n).
	\end{equation}

	We now complete the proof of the main theorem.

	\begin{proof}[Proof of Theorem~\ref{thm:main}]
		Fix $m\in\mathbb N_0$ and $p\in[1,\infty]$. By
		Proposition~\ref{prop:uniform-weighted-density-path}, after
		discarding finitely many indices $n$, all finite-order
		weighted density estimates required for this fixed pair $(m,p)$
		hold along the interpolation paths. In particular, $F_n$ has a
		density in $W^{m,p}(\mathbb R^d)$ for all sufficiently large $n$.
		
		We first note that $M(F_n)\to0$. Indeed, since each $F_{n,i}\in \mathcal{H}_{q}(X)$ and $\E[F_{n,i}^2]=1$,
		Lemma~\ref{lem:finite-chaos-lifting} yields uniformly bounded
		moments of every fixed order. Hence polynomials of fixed degree in
		the components of $F_n$ are uniformly integrable. Convergence of $F_n$ in law to $\mathcal N_d(0,I_d)$ therefore implies convergence of all fixed joint moments. In particular,
		$\kappa_{ijk}(F_n)\to0$ and
		$\kappa_{iiii}(F_n)=\E[F_{n,i}^4]-3\to0$.
		Since $d$ is fixed, it follows that
		$\Delta_3(F_n)\to0$, $\Delta_4(F_n)\to0$, and hence
		$M(F_n)\to0$.
		
		The upper bound follows from
		Proposition~\ref{prop:uniform-path-control} with $u=1$. For the lower bound,
		Proposition~\ref{prop:density-Edgeworth},
		\eqref{eq:Lp-detection} and
		$0\leq\Delta_4(F_n)\leq M(F_n)$ give
		\begin{align*}
			\norm{p_{F_n}-\phi}_{W^{m,p}}
			&\geq \norm{p_{F_n}-\phi}_{L^p}
			\geq \norm{\mathcal E_{F_n}}_{L^p}
			-\norm{R_n}_{L^p}\\
			&\geq c_pM(F_n)
			-C\bigl(M(F_n)^2+\Delta_4(F_n)^{5/4}\bigr)\\
			&=M(F_n)
			\bigl(c_p-C(M(F_n)+M(F_n)^{1/4})\bigr).
		\end{align*}
		Since $M(F_n)\to0$, for all sufficiently large $n$, $C\bigl(
		M(F_n)+M(F_n)^{1/4}
		\bigr)
		\leq
		c_p/2$.
		Hence, for some sufficiently large $n_0$,
		\begin{align}\label{eq:lower-bound-Wmp}
			\norm{p_{F_n}-\phi}_{W^{m,p}}
			\geq \frac{c_p}{2}M(F_n),
			\qquad n\geq n_0.
		\end{align}
		Combining this with the upper bound completes the proof.
	\end{proof}

	\subsection{Optimal rates for non-smooth probability metrics}
	\label{sec:non-smooth-metrics}
	
	We first derive the polynomially weighted version of the upper estimate
	along the interpolation path. It follows from the same interpolation
	identity as Proposition~\ref{prop:uniform-path-control}, together with
	the polynomial decay estimate of
	Proposition~\ref{prop:polynomial-density-estimate}.
	
	\begin{proposition}\label{prop:polynomial-rate}
		Fix $a\in\mathbb N_0$. There exist $N\geq1$ and $C<\infty$ such that,
		for every $n\geq N$,
		\begin{equation}\label{eq:polynomial-rate}
			\sup_{0\leq u\leq1}\sup_{x\in\mathbb R^d}
			(1+|x|)^a
			|p_{n,u}(x)-\phi(x)|
			\leq
			CM(F_n).
		\end{equation}
	\end{proposition}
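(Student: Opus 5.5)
We follow the proof of Proposition~\ref{prop:uniform-path-control}, but measure the terms pointwise with the weight $(1+|x|)^a$ instead of in $W^{m,p}$. Fix $a$ and choose the finite exponents $P,Q$ from Proposition~\ref{prop:polynomial-density-estimate} with $m=4$ and this $a$. By Proposition~\ref{prop:eventual-finite-order-regularity} and Proposition~\ref{prop:interpolation-covariance}, there is an $N$ such that the path family $\mathscr F_N^{\mathrm{int}}=\{F_{n,t}:n\ge N,\,0\le t\le1\}$ satisfies $\mathcal A(d+5,P,Q)$. Proposition~\ref{prop:polynomial-density-estimate} then gives, uniformly in $n\ge N$, $t\in[0,1]$, $k\le4$, $\beta\in[d]^k$ and $x\in\R^d$,
\[
(1+|x|)^a|\partial_\beta p_{n,t,G}(x)|\le C\norm{G}_{d+4,P}.
\]
Taking $G=1$ bounds the derivatives of order $3$ of $p_{n,t}$. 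Taking $G=\Gamma_\eta(F_n)$ with $\eta\in[d]^4$ and using Proposition~\ref{prop:Gamma-Sobolev-Chen} at order $(d+4,P)$ bounds the derivatives of order $4$ of $p_{n,t,\Gamma_\eta(F_n)}$ by $C\Delta_4(F_n)$.

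Next we use the pointwise identity obtained by integrating \eqref{eq:general-interpolation} with $M=3$ over $[0,u]$:
\[
p_{n,u}(x)-\phi(x)=-\frac14\sum_{\alpha\in[d]^3}\kappa_\alpha(F_n)\int_0^u\sqrt t\,\partial_\alpha p_{n,t}(x)\dd t+\frac12\sum_{\eta\in[d]^4}\int_0^u t\,\partial_\eta p_{n,t,\Gamma_\eta(F_n)}(x)\dd t .
\]
This holds first in $\mathscr S'(\R^d)$, by the argument of Theorem~\ref{thm:density-interpolation-distribution}. The integrands are continuous in $x$ and bounded uniformly in $t$, so the right-hand side is a continuous function. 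The left-hand side is also continuous, by Proposition~\ref{prop:weighted-density-formula}. Two continuous functions that agree as distributions agree everywhere, so the identity holds pointwise.

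Multiplying by $(1+|x|)^a$ and using the bounds above, the first sum is at most $C\Delta_3(F_n)$ and the second is at most $C\Delta_4(F_n)$. Both bounds are uniform in $u$ and $x$, which gives \eqref{eq:polynomial-rate}.

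The main obstacle is checking that the $t$-integral commutes with pointwise evaluation and with the weight, including near $t=1$. The Gaussian-convolution bounds of Proposition~\ref{prop:gaussian-smoothing} degenerate as $t\uparrow1$, so they cannot be used there. Instead we rely on the uniform continuity-and-decay bounds from Proposition~\ref{prop:polynomial-density-estimate} across the whole path. These bounds hold for every $t\in[0,1]$, including $t=1$, so dominated convergence justifies the exchange.
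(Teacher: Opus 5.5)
Your proposal is correct and follows essentially the same route as the paper. It applies Proposition~\ref{prop:polynomial-density-estimate} with $m=4$ uniformly along the path family via Propositions~\ref{prop:eventual-finite-order-regularity} and~\ref{prop:interpolation-covariance}, bounds the $\Gamma_\eta(F_n)$-weighted terms with Proposition~\ref{prop:Gamma-Sobolev-Chen}, and inserts these bounds into the $M=3$ interpolation identity on $[0,u]$. Your additional argument that the distributional identity holds pointwise (both sides continuous, with the $t$-integration exchange justified by the uniform-in-$t$ decay bounds) only spells out what the paper compresses into ``interpreted through the continuous representatives.''
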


	\begin{proof}
		Apply Proposition~\ref{prop:polynomial-density-estimate} with
		density derivative order $m=4$ and weight exponent $a\in\mathbb N_0$. By
		Proposition~\ref{prop:eventual-finite-order-regularity} and
		Proposition~\ref{prop:interpolation-covariance}, there exists $N$ such that the finite-order conditions $\mathcal{A} (d+5, P,Q)$ required in Proposition~\ref{prop:polynomial-density-estimate} hold uniformly for
		the family $\mathscr F_N^{\mathrm{int}}$. Hence
		\[
		\sup_{n\geq N}\sup_{0\leq t\leq1}
		\max_{\alpha\in[d]^3}
		\sup_{x\in\mathbb R^d}
		(1+|x|)^a
		|\partial_\alpha p_{n,t}(x)|
		\leq C.
		\]
		For the weights $\Gamma_\eta(F_n)$, $\eta\in[d]^4$,
		Proposition~\ref{prop:polynomial-density-estimate} together with
		Proposition~\ref{prop:Gamma-Sobolev-Chen} gives, after enlarging the finite exponents and $N$ if necessary, for every $n\geq N$,
		\[
		\sup_{0\leq t\leq1}\max_{\eta\in[d]^4}
		\sup_{x\in\mathbb R^d}
		(1+|x|)^a
		|\partial_\eta p_{n,t,\Gamma_\eta(F_n)}(x)|
		\leq
		C\max_{\eta\in[d]^4}
		\norm{\Gamma_\eta(F_n)}_{d+4,P}
		\leq
		C\Delta_4(F_n).
		\]
		For $u\in[0,1]$, the interpolation identity used in the proof of
		Proposition~\ref{prop:uniform-path-control}, interpreted through the
		continuous representatives furnished by
		Proposition~\ref{prop:weighted-density-formula}, reads
		\[
		p_{n,u}-\phi
		=
		-\frac14
		\sum_{\alpha\in[d]^3}
		\kappa_\alpha(F_n)
		\int_0^u\sqrt t\,\partial_\alpha p_{n,t}\,\dd t
		+
		\frac12
		\sum_{\eta\in[d]^4}
		\int_0^u t\,
		\partial_\eta p_{n,t,\Gamma_\eta(F_n)}\,\dd t.
		\]
		Multiplying by $(1+|x|)^a$, taking the supremum in $x$ and $u$, and
		using the two estimates above yields
		\[
		\sup_{0\leq u\leq1}\sup_{x\in\mathbb R^d}
		(1+|x|)^a|p_{n,u}(x)-\phi(x)|
		\leq
		C\Delta_3(F_n)+C\Delta_4(F_n)
		\leq
		CM(F_n),
		\]
		which proves \eqref{eq:polynomial-rate}.
	\end{proof}
	
	\begin{proof}[Proof of Corollary~\ref{cor:probability-metrics}]
		We first prove the upper bounds. For total variation and Kolmogorov distances, applying Theorem~\ref{thm:main} with $m=0$ and $p=1$ gives
		\[
		d_{\mathrm{Kol}}(F_n,\mathcal N_d(0,I_d))\leq d_{\mathrm{TV}}(F_n,\mathcal N_d(0,I_d))
		=
		\frac12\norm{p_{F_n}-\phi}_{L^1(\mathbb R^d)}\leq CM(F_n).
		\]
		For the $1$-Wasserstein distance, let $h$ be a Lipschitz function with $\operatorname{Lip}(h)\leq1$. Since $|h(x)-h(0)|\leq|x|$ and both densities integrate to one, Proposition~\ref{prop:polynomial-rate} with any integer $a>d+1$ gives
		\begin{align*}
		d_{\mathrm W}(F_n,\mathcal N_d(0,I_d))
		&= 	\sup_{\operatorname{Lip}(h)\leq1} \left|\E[h(F_n)]-\E[h(Z)]\right|
		\leq
		\int_{\mathbb R^d}|x|\,|p_{F_n}(x)-\phi(x)|\,\dd x
		\\
		&\leq
		CM(F_n)
		\int_{\mathbb R^d}|x|(1+|x|)^{-a}\,\dd x
		\leq
		CM(F_n).
		\end{align*}
		For the lower bounds, Proposition~\ref{prop:density-Edgeworth}
		with $m=0$ and $p=1$ gives
		\[
		p_{F_n}-\phi
		=
		\mathcal E_{F_n}+R_n,
		\qquad
		\norm{R_n}_{L^1}
		\leq
		C\big(M(F_n)^2+\Delta_4(F_n)^{5/4}\big).
		\]
		By \eqref{eq:metric-detection} and the definition \eqref{eq:def-norm},
		\begin{align*}
			&\min\left\{
			\|\mathcal E_{F_n}\|_{\mathrm{TV}}, 	\|\mathcal E_{F_n}\|_{\mathrm{Kol}},
			\|\mathcal E_{F_n}\|_{\mathrm{BL}}
			\right\}
			\geq c_0M(F_n),\\
			&\max\left\{  \|R_n\|_{\mathrm{TV}}, 	\| R_n\|_{\mathrm{Kol}},
			\|R_n \|_{\mathrm{BL}} \right\}
			\leq\norm{R_n}_{L^1}.
		\end{align*}
		Finally, note that 
		\begin{align*}
			&d_{\mathrm{TV}}(F_n,\mathcal N_d(0,I_d))
			=
			\| p_{F_n}-\phi\|_{\mathrm{TV}},
			\qquad
			d_{\mathrm{Kol}}(F_n,\mathcal N_d(0,I_d))
			= 	\| p_{F_n}-\phi\|_{\mathrm{Kol}},\\
			&d_{\mathrm W}(F_n,\mathcal N_d(0,I_d))
			\geq
			\| p_{F_n}-\phi\|_{\mathrm{BL}},
		\end{align*}
		where the last inequality follows from the fact that the test class defining $\|\cdot\|_{\mathrm{BL}}$ is contained in the test class defining $d_{\mathrm W}$.
		Therefore, for each
		$d_{*}\in\{d_{\mathrm{TV}},d_{\mathrm{Kol}},d_{\mathrm W}\}$,
		\[
		d_{*}(F_n,\mathcal N_d(0,I_d))
		\geq
		c_0M(F_n)
		-
		C\big(M(F_n)^2+\Delta_4(F_n)^{5/4}\big).
		\]
		Since $0\leq\Delta_4(F_n)\leq M(F_n)$ and $M(F_n)\to0$, as shown in the proof of
		Theorem~\ref{thm:main}, the same argument used to obtain \eqref{eq:lower-bound-Wmp} gives all three lower bounds and completes the proof.
	\end{proof}

\printbibliography

\end{document}